\def\EE{\mathbb{E}}
\def\FF{\mathbb{F}}
\def\GG{\mathbb{G}}
\def\LL{\mathbb{L}}
\def\NN{\mathbb{N}}
\def\PP{\mathbb{P}}
\def\RR{\mathbb{R}}
\def\Bcal{\mathcal{B}}
\def\Ccal{\mathcal{C}}
\def\Fcal{\mathcal{F}}
\def\Gcal{\mathcal{G}}
\def\Ical{\mathcal{I}}
\def\Lcal{\mathcal{L}}
\def\Ncal{\mathcal{N}}
\def\Pcal{\mathcal{P}}
\def\Scal{\mathcal{S}}
\def\Tcal{\mathcal{T}}
\def\Ntil{\Tilde{N}}
\newcommand{\eps}{\varepsilon}
\def\E#1{\mathbb{E}\left[ #1 \right]}
\def\ind#1{\mathds{1}_{\{#1\}}}
\def\1{\mathds{1}}
\newtheorem{theorem}{Theorem}[section]
\renewcommand{\thetheorem}{%
  \ifnum\c@subsection=0
    \thesection.\number\c@theorem
  \else
    \thesubsection.\number\c@theorem
  \fi}
\newtheorem{corollary}[theorem]{Corollary}
\newtheorem{lemma}[theorem]{Lemma}
\newtheorem{proposition}[theorem]{Proposition}
\newtheorem{definition}[theorem]{Definition}
\newtheorem{assumption}[theorem]{Assumption}
\newtheorem{notation}[theorem]{Notation}
\newtheorem{remark}[theorem]{Remark}
\numberwithin{equation}{section}
\renewcommand{\theequation}{%
  \ifnum\c@subsection=0
    \thesection.\number\c@equation
  \else
    \thesubsection.\number\c@equation
  \fi}
\def\z{\mathrsfs{Z}}
\DeclareSymbolFontAlphabet{\mathrsfs}{rsfs}
\title{Normal approximation of Functionals of Point Processes: Application to Hawkes Processes.}
\author{Laure Coutin\footnote{UPS, IMT UMR CNRS 5219, Universit\'e de Toulouse, 135 avenue de Rangueil 31077 Toulouse Cedex 4 France. \; Email: \texttt{laure.coutin@math.univ-toulouse.fr}} \and Benjamin Massat\footnote{UPS, IMT UMR CNRS 5219, Universit\'e de Toulouse, 135 avenue de Rangueil 31077 Toulouse Cedex 4 France. \; Email: \texttt{benjamin.massat@math.univ-toulouse.fr}} \and Anthony Réveillac\footnote{INSA de Toulouse, IMT UMR CNRS 5219, Universit\'e de Toulouse, 135 avenue de Rangueil 31077 Toulouse Cedex 4 France. \; Email: \texttt{anthony.reveillac@insa-toulouse.fr}} }
\date{\today}
\begin{document}
\maketitle

\begin{abstract}
    In this paper, we derive an explicit upper bound for the Wasserstein distance between a functional of point processes and a Gaussian distribution. Using Stein's method in conjunction with Malliavin's calculus and the Poisson imbedding representation, our result applies to a variety of point processes including discrete and continuous Hawkes processes. In particular, we establish an explicit convergence rate for stable continuous non-linear Hawkes processes and for discrete Hawkes processes. Finally, we obtain an upper bound in the context of nearly unstable Hawkes processes.
\end{abstract}

\textbf{Keywords:} Counting process, Poisson imbedding representation, Non-linear Hawkes process, Berry-Essen bound.\\
\textbf{Mathematics Subject classification (2020):} 60F05, 60G55, 60H07.

\section{Introduction}
The Nourdin-Peccati methodology initiated in \cite{nourdin_normal_2012} has opened the way to new quantitative limit theorems by combining Stein's method with Malliavin's calculus. Initially defined for Gaussian functionals, this approach has been extended to the normal approximation for Poisson functionals by Peccati, Solé, Taqqu and Utzet in \cite{peccati_steins_2010}, and further investigated in several works in the literature (we refer for instance to the work of Last, Peccati and Schulte in \cite{last_normal_2016} which constitutes one of the last development of this theory with the use of Mehler's formula). A major issue is the quantification of functional Central Limit Theorems; this is adresses in \cite{besancon_diffusive_2024} by Besançon, Coutin and Decreusefond using other techniques. Finally, another line of research (which this paper contributes to) consists in coupling the Nourdin-Peccati methodology with the so-called Poisson Imbedding representation. The latter consists in representing a counting process as a solution of an SDE driven by some Poisson random measure. We refer to Brémaud and Massoulié \cite{bremaud_stability_1996} for a discussion about this representation.
Combining the Poisson Imbedding representation with the Nourdin-Peccati methodology, Torrisi gave the first bound for functionals of counting processes with stochastic intensities in \cite{torrisi_gaussian_2016}. This class of processes includes the non-linear Hawkes process $H$ with the stochastic intensity $\lambda := \left( \lambda_t\right)_{t\geq 0}$ given as
$$\lambda_t = h\left( \mu + \int_{(0,t)} \phi(t-s) dH_s \right), \quad t\geq 0,$$
where $\mu \in \RR$, $h:\RR \to \RR_+$ is $\alpha$-Lipschitz and $\phi:\RR_+\to \RR$ is such that $\alpha \Vert \phi \Vert_1 <1$. However, this bound does not converge to $0$ as $T$ tends to $+\infty$, even when we consider linear Hawkes processes ($h(x)=x$); more precisely, Theorem 3.1  of \cite{torrisi_gaussian_2016} yields
\begin{equation}
\label{eq: maj_torrisi}
    d_W \left( \frac{H_T - \int_0^T \lambda_t dt}{\sqrt{T}}, G\right) =O(1).
\end{equation}
where $G\sim \Ncal\left(0,\sigma^2 \right)$ and $d_W$ denotes the Wasserstein distance. This result can be seen as a one marginal quantification of the functional CLT
$$\left(\frac{H_{tT}- \int_0^{tT} \lambda_s ds}{\sqrt{T}}\right)_{t\in[0,1]} \xrightarrow[]{T\to +\infty} \left( \sigma B_t\right)_{t\in [0,1]}$$
which has been obtained by Bacry, Delattre, Hoffmann and Muzy in \cite{bacry_limit_2013} in the linear case and extended by Zhu in \cite{zhu_nonlinear_2013} in the non-linear case. In a series of papers by Hillairet, Huang, Khabou, Privault and Réveillac (see \cite{hillairet_malliavin-stein_2022}, \cite{khabou_malliavin-stein_2021}, \cite{khabou_normal_2024}), \eqref{eq: maj_torrisi} has become in the linear case,
\begin{equation}
\label{eq: maj_intro}
    d_W \left( \frac{H_T - \int_0^T \lambda_t dt}{\sqrt{T}}, G\right) \leq \frac{C}{\sqrt{T}}.
\end{equation}
Note that a version with multivariate compound Hawkes process has been given in \cite{khabou_malliavin-stein_2021} by Khabou. Moreover, Coutin and Quayle proved a CLT in \cite{quayle_etude_2022} for discrete Hawkes processes. More recently, Deschatre, Gruet and Lotz in \cite{deschatre_limit_2025} generalize results of \cite{bacry_limit_2013} by considering Hawkes process whose baseline and kernel change over time. In particular, they consider a class of locally stationary Hawkes process that has been introduced by Roueff, von Sachs and Sansonnet in \cite{roueff_locally_2016}.

Another significant convergence result for Hawkes processes was obtained by Jaisson and Rosenbaum \cite{jaisson_limit_2015}, who studied nearly unstable Hawkes processes and obtained a non Gaussian limit. Besides, Horst and Xu in \cite{horst_functional_2024} established limit theorems and provided quantitative results for subcritical ($\Vert \phi \Vert_1 <1$) and critical ($\Vert \phi \Vert_1 =1$) linear Hawkes processes.  
In particular, they extended existing results on the asymptotic behavior of subcritical and critical Hawkes processes by considering kernel functions $\phi$ under weaker assumptions than those previously required in the literature. Moreover, they quantified their results, showing that the convergence rate is strongly influenced by another fundamental convergence property involving the unique solution $\Psi$ of the Volterra equation:  
$$ \Psi = \phi + \phi \ast \Psi.$$
Specifically, they demonstrated that the upper bound is governed by the difference between $\Psi(T \cdot)$ and an exponential function.
\\\\
\noindent
In this paper, we consider a general quantity $F^T$ that unifies the aforementioned limit theorems. More precisely, we give in Section \ref{sec: gen estimate} a general bound of $d_W\left( F^T,G\right)$ in Theorem \ref{thm: main} where $F^T$ is defined by
$$F^T := \iint_{[0,T]\times \RR_+} \1_{\theta \leq f^T(t)} \frac{1}{\sqrt{T g^T(t)}} \left( N(dt,d\theta)-dtd\theta\right)$$
in terms of the quantities $f^T$ and $g^T$ that may be random. Here, $N$ stands for a two-parameter Poisson measure. After providing a quantification for the CLT for general counting processes (see Proposition \ref{prop:main2}), we develop our result in four particular directions. We start by providing the same bound as \eqref{eq: maj_intro} when $H$ stands for a non-linear Hawkes process. Our result also allows us to get a similar result to \eqref{eq: maj_intro} in the case of locally stationnary Hawkes proceess and in the case of discrete Hawkes process. Finally, we address the nearly unstable Hawkes process by providing some partial results.\\\\
\noindent
We proceed as follows. The Poisson imbedding and elements of Malliavin's calculus are presented in Section \ref{sec: Notation}. The main results are collected in Section \ref{sec: gen estimate} and applied to (linear and non-linear) Hawkes processes in Section \ref{sec: App Hawkes}, to locally stationnary Hawkes processes in Section \ref{sec:locally_stat} and to discrete Hawkes processes in Section \ref{sec:discrete}. The estimates on the nearly unstable case are collected in Section \ref{sec:nearly}. Finally technical lemmata are postponed in Section \ref{sec:technical}.


\section{Notations and preliminaries}
\label{sec: Notation}
To start with, let us introduce some notations. We denote by $\NN$ (resp. $\NN^*$) the set of non-negative (resp. positive) integers, that is $\NN := 0, \, 1,\, 2,\dots$ (resp. $\NN^* :=  1,\, 2,\dots$ ). We also define respectively $\RR_+ := [0, +\infty)$ and $\RR_+^* := (0, +\infty)$ the real non-negative and positive line. Moreover, for any $(a,b)\in \RR^2$, we may use the notation $[a,b]$ either for $a\leq b$ or for $a\geq b$. We also denote by $\Bcal(E)$ the Borel set of some topological space $E$.\\
Besides, for any random variable $F$, we will write $\Lcal_F$ its distribution.\\

Throughout this paper, $C$ denotes a constant that may change from line to line and which is independent of $T$ which is the parameter to be controlled in our results.\\

\subsection{Elements of stochastic analysis on the Poisson space}

Let the space of configurations
$$ \Omega:=\left\{\omega=\sum_{i=1}^{n} \delta_{(t_{i},\theta_i)} \mid t_0 < t_1 < \cdots < t_n, \; (\theta_i)_{i=1,\dots,n} \in \left(\RR_+\right)^n, \; n\in \NN \cup\{+\infty\} \right\}.$$
Each realization of a counting process is represented as an element $\omega$ in $\Omega$ which is a $\NN$-valued measure on $\RR \times \RR_+$. 

Let $\Fcal^N$ be the $\sigma$-field associated to the vague topology on $\Omega$, and $\PP$ be a probability measure under which the random measure $N$ defined as:
$$ N(B)(\omega):=\omega(B), \quad B \in \Bcal \left( \RR\times \RR_+\right),$$
is a random measure with intensity $1$ (so that $N(B)$ is a Poisson random variable with intensity $\pi(B)$ for any $B\in \Bcal \left( \RR \times \RR_+ \right)$ and where $\pi$ denotes the Lebesgue measure). We set $\FF^N:=(\Fcal_t^N)_{t\in \RR}$ the natural history of $N$, that is $$\Fcal_t^N:=\sigma \left(N( \Tcal  \times B), \; \Tcal \subset \Bcal((-\infty,t]), \; B \in \Bcal(\RR_+) \right).$$ Let also, $\Fcal_\infty^N:=\lim_{t\to+\infty} \Fcal_t^N$. The expectation with respect to $\PP$ is denoted by $\E{\cdot}$. For $t\in \RR$, we denote by $\EE_t[\cdot]$ the conditional expectation $\E{\cdot \vert \Fcal_t^N}$.\\\\
\noindent
We recall some elements of stochastic analysis on the Poisson space, especially the shift operator, the Malliavin derivative and its dual operator: the divergence. 
The following essentially is from Hillairet et al. in \cite{hillairet_expansion_2023}.

\begin{definition}[Shift operator]
\label{definition:shift}
    We define for $(t,\theta)$ in $\RR \times \RR_+$ the measurable map $\eps_{(t,\theta)}^+ : \Omega \to \Omega $ where for any $A$ in $\Bcal(\RR \times \RR_+)$
    $$(\eps_{(t,\theta)}^+(\omega))(A) := \left\lbrace \begin{array}{l} \omega(A \setminus \{(t,\theta)\}) +1, \quad \textrm{if } (t,\theta)\in A,\\\omega(A \setminus \{(t,\theta)\}) , \quad \textrm{else.}\end{array}\right. .$$
\end{definition}

\begin{lemma}
    \label{lemma:mesur}
    Let $t \in \RR$ and $F$ be an $\Fcal_t^N$-measurable random variable. Let $v > t$ and $\theta\geq 0$. It holds that 
    $$ F\circ\eps_{(v,\theta)}^+ = F, \quad \PP-a.s.. $$
\end{lemma}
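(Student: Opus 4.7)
The statement is essentially the measure-theoretic assertion that the shift $\eps^+_{(v,\theta)}$ acts trivially on the past $\sigma$-field $\Fcal^N_t$ when $v > t$. My plan is a standard monotone class argument anchored at the generators of $\Fcal^N_t$.

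First, I would verify the claim directly on the generators. Recall that $\Fcal_t^N$ is generated by the family of random variables $N(\Tcal \times B)$ with $\Tcal \in \Bcal((-\infty,t])$ and $B \in \Bcal(\RR_+)$. For such a set $\Tcal \times B$ and for $v > t$ we have $(v,\theta) \notin \Tcal \times B$, and therefore by the definition of the shift operator (Definition \ref{definition:shift}),
\begin{equation*}
    \bigl(\eps_{(v,\theta)}^+(\omega)\bigr)(\Tcal \times B) = \omega\bigl((\Tcal \times B) \setminus \{(v,\theta)\}\bigr) = \omega(\Tcal \times B),
\end{equation*}
so that $N(\Tcal \times B) \circ \eps_{(v,\theta)}^+ = N(\Tcal \times B)$ pointwise on $\Omega$ (in fact, everywhere, not just almost surely). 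In particular the identity is preserved under finite-dimensional measurable transformations: for any measurable $g : \NN^n \to \RR$ and any family $(\Tcal_i \times B_i)_{i=1,\dots,n}$ with $\Tcal_i \subset (-\infty, t]$,
\begin{equation*}
    g\bigl(N(\Tcal_1\times B_1),\dots, N(\Tcal_n\times B_n)\bigr) \circ \eps^+_{(v,\theta)} = g\bigl(N(\Tcal_1\times B_1),\dots, N(\Tcal_n\times B_n)\bigr).
\end{equation*}

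Second, I would extend this to arbitrary $F \in \Fcal^N_t$ by a functional monotone class (or $\pi$-$\lambda$) argument. Let $\Hcal$ denote the vector space of bounded $\Fcal^N_t$-measurable $F$ satisfying $F \circ \eps^+_{(v,\theta)} = F$. The previous step shows that $\Hcal$ contains the algebra of cylinder functions based on the generators of $\Fcal^N_t$; $\Hcal$ is closed under bounded monotone pointwise limits since the equality is pointwise; hence the functional monotone class theorem yields $\Hcal = L^\infty(\Omega, \Fcal^N_t, \PP)$. Removing the boundedness assumption by truncation $F_n := (-n) \vee F \wedge n$ and passing to the limit then gives the statement for all $\Fcal^N_t$-measurable $F$.

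I do not anticipate a serious obstacle: the identity actually holds everywhere on $\Omega$, not merely $\PP$-almost surely, so no delicate use of negligible sets is required. The only subtlety is purely notational, namely making sure that the generators of $\Fcal^N_t$ are indeed based on rectangles of the form $\Tcal \times B$ with $\Tcal \subset (-\infty,t]$, which is given by the definition of $\Fcal^N_t$ recalled just above the lemma.
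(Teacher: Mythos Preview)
The paper states this lemma without proof, so there is no argument to compare against. Your approach is correct and is the standard one: verify the identity on the generating family $N(\Tcal\times B)$ with $\Tcal\subset(-\infty,t]$, where it holds pointwise because $(v,\theta)\notin\Tcal\times B$, and then extend to all $\Fcal_t^N$-measurable $F$ by a functional monotone class argument followed by truncation. Your observation that the identity actually holds for every $\omega$, not merely $\PP$-a.s., is also correct and makes the monotone class step completely routine.
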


\begin{definition}[Malliavin derivative]
    For $F$ in $\LL^2(\Omega,\Fcal_\infty^N,\PP)$, we define $D F$ the Malliavin derivative of $F$ as 
    $$ D_{(t,\theta)} F := F\circ \eps_{(t,\theta)}^+ - F, \quad (t,\theta) \in \RR_+^2.  $$
\end{definition}

\noindent The following definition is a by-product of \cite[Theorem 1]{picard_formules_1996} (see also \cite{nualart_anticipative_1990}).

\begin{definition}
    Let $\Ical$  be the sub-sigma field of $\Bcal(\RR \times \RR_+)\otimes \Fcal^N$ of stochastic processes $Z:=(Z_{(t,\theta)})_{(t,\theta) \in \RR \times \RR_+}$ in $L^1(\Omega \times \RR \times \RR_+,\PP\otimes \pi)$ such that 
    $$ D_{(t,\theta)} Z_{(t,\theta)} = 0, \quad \textrm{ for a.s. } (t,\theta) \in \RR \times \RR_+.$$
\end{definition}

\begin{remark}
\label{rem:Nshift}
    Let $(t_0,\theta_0)$ in $\RR \times \RR_+$, $(s,t)$ in $\RR \times \RR_+$ with $t_0 < s < t$.\\
    For $\mathcal T \in \{(s,t), (s,t], [s,t), [s,t]\}$ and $B$ in $\Bcal(\RR_+)$,  we have that : 
    $$ N\circ \eps_{(t_0,\theta_0)}^+ (\Tcal \times B) = N (\Tcal \times B). $$
\end{remark}

\begin{definition}
\label{def: div op}
    We set $\Scal$ the set of stochastic processes $Z:=(Z_{(t,\theta)})_{(t,\theta) \in \RR\times \RR_+}$ in $\Ical$ such that $Z$ is predictable according to the natural history and: 
    $$ \E{\iint_{\RR\times \RR_+} \left|Z_{(t,\theta)}\right|^2 dt d\theta} + \E{\left(\iint_{\RR\times \RR_+} Z_{(t,\theta)} dt d\theta \right)^2}<+\infty .$$
    For $Z$ in $\Scal$, we set the divergence operator with respect to $N$ as  
    \begin{equation}
    \label{eq:delta}
        \delta(Z):=\iint_{\RR\times \RR_+} Z_{(t,\theta)} N(dt,d\theta) - \iint_{\RR\times \RR_+} Z_{(t,\theta)} dt d\theta.
    \end{equation}
\end{definition}

\noindent We conclude this section with the integration by parts formula on the Poisson space (see \cite[Remark 1]{picard_formules_1996}) and the Heisenberg equality.

\begin{proposition}[See \textit{e.g.} \cite{picard_formules_1996}]
\label{prop:IPP}
    Let $F$ be in $\LL^2(\Omega,\Fcal_\infty^N,\PP)$ and $Z=(Z_{(t,\theta)})_{(t,\theta) \in \RR\times \RR_+}$ be in $\Scal$. We have that 
    \begin{equation}
    \label{eq:IBPPoisson}
        \E{F \delta(Z)} = \E{\iint_{\RR\times \RR_+} Z_{(t,\theta)} D_{(t,\theta)} F dt d\theta}.
    \end{equation}
\end{proposition}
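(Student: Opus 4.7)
The plan is to reduce the identity to Mecke's formula for the Poisson random measure $N$ with Lebesgue intensity, which asserts that for any measurable $h(\omega,t,\theta)\geq 0$,
\begin{equation*}
\E{\iint_{\RR\times \RR_+} h(\omega,t,\theta)\, N(dt, d\theta)} = \E{\iint_{\RR\times \RR_+} h(\eps_{(t,\theta)}^+\omega,t,\theta)\, dt d\theta},
\end{equation*}
and then to exploit the assumption $Z\in \Ical$ to cancel the shift on $Z$. Mecke's formula (in its extended signed form) is precisely the content referenced in \cite{picard_formules_1996}, so no re-derivation is needed.

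First I would split $\delta(Z)$ via \eqref{eq:delta} to write
\begin{equation*}
\E{F \delta(Z)} = \E{F \iint_{\RR\times \RR_+} Z_{(t,\theta)}\, N(dt, d\theta)} - \E{F \iint_{\RR\times \RR_+} Z_{(t,\theta)}\, dt d\theta}.
\end{equation*}
The two moment bounds packaged in the definition of $\Scal$, combined with $F\in L^2$ and Cauchy--Schwarz, ensure each of the two expectations is finite and that Fubini's theorem may be applied freely; this is the routine technical step, and it is also what allows one to pass from the non-negative version of Mecke's identity to its signed version by the usual truncation / dominated convergence argument.

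Next, applying Mecke to the stochastic integral with integrand $h(\omega,t,\theta):=F(\omega) Z_{(t,\theta)}(\omega)$ gives
\begin{equation*}
\E{F \iint_{\RR\times \RR_+} Z_{(t,\theta)}\, N(dt, d\theta)} = \E{\iint_{\RR\times \RR_+} (F\circ \eps_{(t,\theta)}^+)(Z_{(t,\theta)}\circ \eps_{(t,\theta)}^+)\, dt d\theta}.
\end{equation*}
The crucial step is then to invoke $Z\in\Ical$: by definition, $D_{(t,\theta)} Z_{(t,\theta)} = 0$ a.e., i.e. $Z_{(t,\theta)}\circ \eps_{(t,\theta)}^+ = Z_{(t,\theta)}$ for $dtd\theta$-a.e. $(t,\theta)$. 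Substituting and recombining with the compensator term,
\begin{equation*}
\E{F \delta(Z)} = \E{\iint_{\RR\times \RR_+} Z_{(t,\theta)}\left( F\circ \eps_{(t,\theta)}^+ - F \right) dt d\theta} = \E{\iint_{\RR\times \RR_+} Z_{(t,\theta)}\, D_{(t,\theta)} F\, dt d\theta}
\end{equation*}
by the very definition of the Malliavin derivative, which is exactly \eqref{eq:IBPPoisson}.

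There is no real conceptual obstacle: the work consists in citing Mecke's formula and verifying integrability. The only mildly delicate point is that the integrand $F Z$ has no sign, so the passage from the non-negative form of Mecke to the signed form needs to be justified; this is handled by truncating $F$ and $Z$, applying the non-negative identity, and passing to the limit via the moment bounds in the definition of $\Scal$ and $F\in L^2$. Those bounds were precisely engineered to make $\delta(Z)\in L^2$ and both pieces of \eqref{eq:delta} well-defined, so the limiting argument is transparent.
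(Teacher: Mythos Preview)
Your argument via Mecke's formula is correct and is precisely the standard route to this identity. Note, however, that the paper does not actually supply a proof of this proposition: it is stated with a citation to \cite{picard_formules_1996} and used as a black box, so there is nothing to compare against beyond observing that your derivation is the one underlying the cited reference.
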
 

\begin{proposition}[Heisenberg commutation property]
\label{prop: heisenberg}
    For $Z\in \Scal$,
    $$D_{(t,\theta)}\left(\delta \left( Z\right) \right) = Z_{(t,\theta)}+\delta \left( D_{(t,\theta)}\left( Z\right)\right), \quad (t,\theta)\in \RR\times \RR_+.$$
\end{proposition}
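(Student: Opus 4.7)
The plan is to apply the shift operator $\eps_{(t,\theta)}^+$ directly to the expression \eqref{eq:delta} defining $\delta(Z)$ and then collect terms. By definition of the Malliavin derivative,
$$D_{(t,\theta)}(\delta(Z)) = \delta(Z)\circ \eps_{(t,\theta)}^+ - \delta(Z),$$
so everything reduces to computing how $\eps_{(t,\theta)}^+$ acts on the stochastic integral and on the compensator.

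First I would use the fact, noted in Remark \ref{rem:Nshift} (extended to arbitrary Borel sets by standard monotone class arguments), that adding the atom at $(t,\theta)$ to the configuration yields the set-function identity
$$N\circ \eps_{(t,\theta)}^+ = N + \delta_{(t,\theta)},$$
a.s.\ in the sense that for a.e. $(t,\theta)$ the point is not already an atom of $N$. Substituting this into \eqref{eq:delta} gives
\begin{align*}
\delta(Z)\circ \eps_{(t,\theta)}^+ &= \iint_{\RR\times\RR_+} (Z_{(s,u)}\circ \eps_{(t,\theta)}^+)\, (N+\delta_{(t,\theta)})(ds,du) - \iint_{\RR\times\RR_+} (Z_{(s,u)}\circ \eps_{(t,\theta)}^+)\, ds\,du\\
&= \iint_{\RR\times\RR_+} (Z_{(s,u)} + D_{(t,\theta)} Z_{(s,u)})\, N(ds,du) + Z_{(t,\theta)}\circ \eps_{(t,\theta)}^+\\
&\quad {}- \iint_{\RR\times\RR_+} (Z_{(s,u)} + D_{(t,\theta)} Z_{(s,u)})\, ds\,du.
\end{align*}

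Next I would invoke the hypothesis $Z\in \Scal\subset \Ical$, which forces $D_{(t,\theta)} Z_{(t,\theta)} = 0$ for $\pi$-a.e. $(t,\theta)$, equivalently $Z_{(t,\theta)}\circ \eps_{(t,\theta)}^+ = Z_{(t,\theta)}$. Re-grouping the terms above then gives
$$\delta(Z)\circ \eps_{(t,\theta)}^+ = \delta(Z) + Z_{(t,\theta)} + \delta(D_{(t,\theta)} Z),$$
and subtracting $\delta(Z)$ from both sides yields exactly the announced identity. One should also check the integrability needed so that $\delta(D_{(t,\theta)} Z)$ makes sense; this follows from the fact that the shift preserves the integrability properties built into the definition of $\Scal$, and one can justify swapping the shift with the Lebesgue and Poisson integrals by approximating $Z$ by elementary predictable processes.

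The main technical subtlety (rather than a deep obstacle) is the manipulation of the Poisson integral under $\eps_{(t,\theta)}^+$: one must legitimately split $N\circ \eps_{(t,\theta)}^+$ into $N$ plus the added atom and commute the shift with the integral against $N$. This is standard but relies on a density argument using simple processes and the fact, emphasised in Remark \ref{rem:Nshift}, that $N\circ \eps_{(t,\theta)}^+$ agrees with $N$ on sets not containing $(t,\theta)$. Once this point is handled cleanly, the identity follows by pure algebra from the $\Ical$-property of $Z$.
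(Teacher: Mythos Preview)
The paper does not supply its own proof of this proposition: it is stated as a known identity alongside the integration-by-parts formula, with an implicit reference to \cite{picard_formules_1996}. Your direct computation---applying $\eps_{(t,\theta)}^+$ to \eqref{eq:delta}, using $N\circ\eps_{(t,\theta)}^+ = N + \delta_{(t,\theta)}$ and the $\Ical$-property $D_{(t,\theta)}Z_{(t,\theta)}=0$---is exactly the standard argument and is correct. The only caveat is that, strictly speaking, $\delta(D_{(t,\theta)}Z)$ is only defined via Definition~\ref{def: div op} when $D_{(t,\theta)}Z\in\Scal$; the paper silently works under this extra hypothesis wherever the identity is used (see e.g.\ the assumption in item~1 of Theorem~\ref{thm: main}), so you may simply note this rather than attempt to justify it in full generality.
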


\paragraph{Poisson Imbedding}\label{parag: poisson imbedding}
In their seminal work on the stability of nonlinear Hawkes processes, Brémaud and Massoulié \cite{bremaud_stability_1996} introduce a new technique known as "Poisson Imbedding", also called Thinning Algorithm, to represent Hawkes processes using uncompensated Poisson measures. The core idea behind this approach is that any point process can be viewed as being driven by an underlying Poisson measure, facilitating both theoretical analysis and numerical simulation (see \cite{ogata_lewis_1981}). The Poisson Imbedding use a two-parameter Poisson measure which serves as the foundation for generating events of the point process. More precisely, for a point process $H$, its $\Fcal^N$-predictable stochastic intensity $\left(\lambda_t \right)_{t\in\RR}$ can be defined as a function of the history of the process. The point process $H$ is given by
\begin{equation}
\label{eq: point process}
    H\left( C\right) = \iint_{C\times \RR_+} \1_{\theta \leq \lambda_t} N\left( dt,d\theta\right), \quad C\in \Bcal\left(\RR \right).
\end{equation}
Hence, each event of the point process is "triggered" by an event in the underlying Poisson measure when the intensity surpasses a certain threshold. Reciprocally, take $\left(\lambda_t \right)_{t\in\RR}$ a non-negative $\Fcal^N$-predictable process and define the point process $H$ as it is done in \eqref{eq: point process}. Then, $H$ admits the $\Fcal^N$-intensity $\left(\lambda_t \right)_{t\in\RR}$.\\

In the following, we will represents any point processes following the Poisson Imbedding representation.


\subsection{Elements on Stein's method}

Whereas Stein's method has been introduced by Stein in \cite{stein_bound_1972}, the combination of the Malliavin calculus with Stein's method (and known as the Nourdin-Peccati's approach) has been initiated in \cite{nourdin_steins_2009} for the approximation of Gaussian functionals, extended in \cite{peccati_steins_2010} for Poisson functionals (which is closer to our paper). We introduce in this section the original Stein's approach which allows one to derive Inequality (\ref{eq:Stein}) below. We refer to \cite{nourdin_normal_2012} for a complete exposition of the original Stein's method and of the Nourdin-Peccati's approach.

\begin{definition}
\label{def:W}
    Let $F$ and $G$ two random variables defined on some $(\Omega, \Fcal_\infty^N,\PP)$. We define the Wasserstein distance between $\Lcal_F$ and $\Lcal_G$ (or simply between $F$ and $G$) as : 
    $$ d_W(F,G) :=\sup_{u \in \mathrsfs{L}_1} \left| \E{u(F)}-\E{u(G)} \right|,$$
    with $\mathrsfs{L}_1:=\left\{u:\RR \to \RR \textrm{ differentiable a.e. with }\; \|u'\|_\infty \leq 1\right\}$. 
\end{definition}
\noindent
Let $G \sim \Ncal(0,\sigma^2)$. 
We set 
\begin{equation}
\label{eq: def_FW}
    \mathrsfs{F}_W:=\left\{u:\RR \to \RR,\; \textrm{ twice differentiable with } \|u'\|_\infty \leq 1, \; \|u''\|_\infty \leq 2\right\}.
\end{equation}

\noindent
Consider $u$ in $\mathrsfs{L}_1$. Stein proved in \cite{stein_bound_1972}, that there exists a function $v_u$ in $\mathrsfs{F}_W$ solution to the functional equation (named Stein's equation) :
$$ u(x)-\E{u(G)} = \sigma^2 v_u'(x) - x v_u(x), \quad \forall x \in \RR. $$
Plugging $F$ in this equation and taking the expectation, we get that : 
$$ \left|\E{u(F)}-\E{u(G)}\right| = \left|\E{\sigma^2 v_u'(F) - F v_u(F)}\right|.$$
Hence, 
\begin{equation}
\label{eq:Stein}
    d_W(F,G) \leq \sup_{v \in \mathrsfs{F}_W} \left|\E{\sigma^2 v'(F) - F v(F)}\right|.
\end{equation}
In addition, the right hand side is equal to $0$ if and only if $F\sim \Ncal(0,\sigma^2)$.

\section{A general estimate for counting processes}
\label{sec: gen estimate}
In this section, we established an upper bound of the Wasserstein distance between a Gaussian random variable $G$ and another random variable $F^T$ with a specific form detailed in \eqref{def: F^T}.\\

\subsection{Notations and assumptions}
We define the set of positive $\FF^N$-predictable processes as $$\Pcal_+ := \left\{ X: \Omega \times \RR \to \RR^*_+ \mid X \text{ is } \FF^N \text{-predictable}\right\}.$$
Throughout this paper, we will consider families of predictable processes indexed by a parameter $T>0$. More precisely, we set
$$\Ccal := \left\{ y:= \left( y^T_{t\wedge T} \right)_{t>0, T>0} \mid \forall T>0, \, y^T \in \Pcal_+ \right\}.$$
In order to simplify the notation, we will use $\left( y^T_{t} \right)_{t\in [0,T]}$ instead of $\left( y^T_{t\wedge T} \right)_{t>0, T>0}$ throughout the paper.

\begin{notation}
\label{not: f and g}
For $f:= \left(f^T(t)\right)_{t\in[0,T]} \in \Ccal$ and $g:= \left(g^T (t) \right)_{t\in[0,T]} \in \Ccal$, we set $\z := \left(\z^T_{t,\theta} \right)_{(t,\theta)\in [0,T]\times \RR_+}$ and $Z:= \left( Z^T_t\right)_{t\in [0,T]} $ as follows
    $$\z^T_{(t,\theta)} :=  \1_{\theta \leq f^T(t)} \quad \textrm{and}\quad Z_t^T := \frac{\1_{[0,T]}(t)}{\sqrt{T g^T(t)}},\, T>0, \, (t,\theta) \in [0,T]\times \RR_+.$$
    By definition, $Z \in \Ccal$ and for any $\theta \in \RR_+$, $\left(\z^T_{(t,\theta)} \right)_{t\in [0,T]} \in \Ccal .$
\end{notation}

From now, we will make use of Notation \ref{not: f and g} whenever referring to $f$, $g$, $\z$, or $Z$.
To streamline the calculations, it becomes imperative to introduce a set of assumptions, which we shall outline in the following

\begin{assumption}
\label{assump: dans S}
 	$f$ and $g$ in $\Ccal$ are such that for all $T>0$, $ \z^T Z^T \in \Scal$ (where we make use of Notation \ref{not: f and g}). We set $F^T$ as follows:
\begin{equation}
\label{def: F^T}
    F^T := \iint_{[0,T]\times \RR_+} \1_{\theta \leq f^T(t)} \frac{1}{\sqrt{T g^T(t)}} \left( N(dt,d\theta)-dtd\theta\right) = \delta \left( \z^T Z^T \right)
\end{equation}
where $\delta(\cdot)$ is the divergence operator with respect to a Poisson process on $\RR\times \RR_+$ with intensity $dtd\theta$ denoted by $N$.\\
\end{assumption}

\begin{assumption}
\label{assump: initiale}$f$ and $g$ in $\Ccal$ are such that for all $T>0$ and for all $(t,\theta) \in [0,T]\times \RR_+$,
        $$\1_{\theta \geq f^T(t)} f^T \circ \eps^+_{(t,\theta)}= \1_{\theta \geq f^T(t)} f^T \quad \textrm{and} \quad \1_{\theta \geq f^T(t)} 	g^T \circ \eps^+_{(t,\theta)} =\1_{\theta \geq f^T(t)} g^T.$$
        Note that this assumption means that both $f^T$ and $g^T$ are affected by $\eps_{(t,\theta)}^{+}$ only if $(t,\theta)$ is under the graph of $f^T$.
\end{assumption}

Before presenting our theorem, we describe direct consequences of Assumption \ref{assump: dans S}.
\begin{remark}
Note that the numerator in the expression of $F^T$ represents the number of points in $[0,T]$ of a point process on $\RR_+$ with a (predictable stochastic) intensity $f^T$. Moreover, the function $g^T$ acts as a normalization process, regulating the impact of accepted points in the point process with intensity $f^T$.
\end{remark}
\begin{remark}
For $T>0$, $\z^T Z^T \in \Scal$ is equivalent to stating that $f^T$ and $g^T$ satisfy the following conditions:
\begin{equation}
\label{rem: hypo1}
    \E{\int_{0}^T \frac{f^T(t)}{g^T(t)} dt}< +\infty \quad \textrm{and} \quad \E{\left( \int_0^T \frac{f^T(t)}{\sqrt{g^T(t)}} dt\right)^2} < +\infty.
\end{equation}
Most of the time, we will opt to establish \eqref{rem: hypo1} rather than relying solely on the explicit definition of $\Scal$.
\end{remark}

\begin{remark}
\label{rem: explosion_hyp_1}
In this remark, we detailed a consequence of Assumption \ref{assump: dans S} combined with two other natural assumptions. Specifically, assume that $f\in \Ccal$ and $g\in \Ccal$ are such that:
\begin{enumerate}
    \item for all $T>0$, $\z^T Z^T \in S$;
    \item there exists $\mu >0$ such that for all $T>0$, $\min_{t\in [0,T]} g^T(t) \geq \mu$;
    \item there exists $\nu >0$ such that for all $T>0$, $\E{\int_{0}^T \frac{f^T(t)}{g^T(t)} dt}\geq \nu T$.
\end{enumerate}
Then,
$$\E{\left(\int_{0}^T \frac{f^T(t)}{\sqrt{g^T(t)}} dt\right)^2} \geq \nu^2 \mu T^2, \quad T>0.$$

\begin{proof}[Proof of Remark \ref{rem: explosion_hyp_1}]
Let $T>0$. Then, we have that
    $$\nu T \leq \E{ \int_{0}^T \frac{f^T(t)}{g^T(t)} dt} \leq \frac{1}{\sqrt{\mu}}\E{\int_{0}^T \frac{f^T(t)}{\sqrt{g^T(t)}} dt}.$$
We now use Jensen's inequality to get
    $$ \nu T \leq \frac{1}{\sqrt{\mu}}\sqrt{\E{ \left(\int_{0}^T \frac{f^T(t)}{\sqrt{g^T(t)}} dt\right)^2}}. $$
Therefore, we have
$$\E{ \left(\int_{0}^T \frac{f^T(t)}{\sqrt{g^T(t)}} dt\right)^2} \geq  \nu^2 \mu T^2.$$

\end{proof}
\end{remark}

\subsection{Upper bounds for general point processes}
Firstly, we establish in Theorem \ref{thm: main} a comprehensive bound on the Wasserstein distance between a centered Gaussian distribution and our variable $F^T$. We then proceed to a refined instance of the theorem in our Theorem \ref{thm : Maj_g_determinist}, under more stringent assumptions, yielding a bound intricately linked to the variance of $F^T$. Finally, we refine Theorem \ref{thm : Maj_g_determinist} further by substituting the normalization factor of $\sqrt{T}$ with $\Delta_T$, subject to precise conditions. This latter result mentioned yields Proposition \ref{prop:main2}.

\begin{theorem}
\label{thm: main}
    Let $G \sim \Ncal (0,\sigma^2)$ for some $\sigma^2>0$. Let $f$ and $g$ such that Assumptions  $\ref{assump: dans S}$ and $\ref{assump: initiale}$ hold. Then, for any $T>0$,
    \begin{enumerate}
        \item If for any $(t,\theta)\in \RR \times \RR_+$, $\z^T D_{(t,\theta)}\left( Z^T\right) \in \Scal$, then
        \begin{align*}
            d_W(F^T,G) &\leq \E{  \left|\sigma^2- \frac{1}{T} \int_{0}^T \frac{f^T(t)}{g^T(t)} dt\right|} +\frac{1}{T^{3/2}} \E{\int_{0}^T \frac{f^T(t)}{(g^T(t))^{3/2}}  dt}\\
            & \quad +\E{ \iint_{\RR_+^2} \z_{(t,\theta)}^{T} Z_t^T \left|\delta \left( \z^T D_{(t,\theta)}\left(Z^T \right)\right) \right| dtd\theta} \\
            & \quad +\E{ \iint_{\RR_+^2} \z_{(t,\theta)}^{T} Z_t^T \left|\delta \left( \z^T D_{(t,\theta)}\left(Z^T \right)\right) \right|^2 dtd\theta}\\
            & \quad + 2\E{ \iint_{\RR_+^2} \z_{(t,\theta)}^{T} \left|Z_t^T \right|^2 \left|\delta \left( \z^T D_{(t,\theta)}\left(Z^T \right)\right) \right|dtd\theta}.
        \end{align*}
        \item Moreover, if $\z^T D_{(t,\theta)}\left( Z^T\right)= \z^T D_{(t,0)}\left( Z^T\right)$, then
        \begin{align*}
            d_W(F^T,G) &\leq \E{  \left|\sigma^2- \frac{1}{T} \int_{0}^T \frac{f^T(t)}{g^T(t)} dt\right|} +\frac{1}{T^{3/2}} \E{\int_{0}^T \frac{f^T(t)}{(g^T(t))^{3/2}}  dt}\\
            & \quad + \frac{1}{\sqrt{T}}\E{ \int_{0}^T \frac{f^T(t)}{\sqrt{g^T(t)}}\left|\delta \left( \z^T D_{(t,0)}\left(Z^T \right)\right) \right| dt} \\
            & \quad +\frac{1}{\sqrt{T}}\E{ \int_{0}^T \frac{f^T(t)}{\sqrt{g^T(t)}} \left|\delta \left( \z^T D_{(t,0)}\left(Z^T \right)\right) \right|^2 dt}\\
            & \quad + \frac{2}{T}\E{ \int_{0}^T \frac{f^T(t)}{g^T(t)} \left|\delta \left( \z^T D_{(t,0)}\left(Z^T \right)\right) \right|dt}.
        \end{align*}
        \item If, in addition, $f^T=g^T$ for all $T>0$, then, by taking $\sigma^2=1$, we have 
        \begin{align*}
            d_W(F^T,G) &\leq  \E{\left( F^T\right)^3} +\frac{1}{T} \int_{0}^T \int_{t}^T\E{  \sqrt{f^T(t)}\sqrt{f^T(s)} \left| D_{(t,0)}\left( Z_s^T\right) \right| }ds dt \\
            & \quad + \frac{1}{\sqrt{T}} \int_{0}^T\int_{t}^T \E{  \sqrt{f^T(t)}f^T(s) \left|  \delta\left( \z^T D_{(s,0)}\left(Z^T\right)\right) D_{(t,0)}\left( Z_s^T\right)\right| } ds dt\\
            &\quad -\frac{2}{T}\E{\int_{0}^T  \sqrt{f^T(t)} \EE_t\left[\int_t^T \sqrt{f^T(s)} D_{(t,0)}\left(Z^T_s \right) ds \right] dt }.
        \end{align*}
    \end{enumerate}
\end{theorem}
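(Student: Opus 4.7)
I would start from Stein's inequality \eqref{eq:Stein}, namely $d_W(F^T,G) \leq \sup_{v \in \mathrsfs{F}_W}\left|\E{\sigma^2 v'(F^T) - F^T v(F^T)}\right|$, and work with an arbitrary fixed $v \in \mathrsfs{F}_W$. The plan is fourfold: (i) integrate $\E{F^T v(F^T)}$ by parts using $F^T = \delta(\z^T Z^T)$ and Proposition \ref{prop:IPP} to bring a Malliavin derivative $D_{(t,\theta)}(v(F^T))$ under the integral; (ii) Taylor-expand $v$ to first order, exploiting $\|v''\|_\infty \leq 2$ to bound the remainder pointwise by $(D_{(t,\theta)}F^T)^2$; (iii) expand $D_{(t,\theta)}F^T$ via the Heisenberg identity (Proposition \ref{prop: heisenberg}); and (iv) use Assumption \ref{assump: initiale} to discard what lies above the graph of $f^T$ and reorganise into the announced decomposition.

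For part 1, the IBP and Taylor step will produce
\begin{equation*}
\E{F^T v(F^T)} = \E{v'(F^T)\iint \z^T_{(t,\theta)} Z^T_t D_{(t,\theta)}F^T \,dt\, d\theta} + R,
\end{equation*}
with $|R| \leq \E{\iint \z^T_{(t,\theta)} Z^T_t (D_{(t,\theta)}F^T)^2 \,dt\, d\theta}$. Heisenberg will give $D_{(t,\theta)}F^T = \z^T_{(t,\theta)} Z^T_t + \delta(D_{(t,\theta)}(\z^T Z^T))$, and Assumption \ref{assump: initiale} will force $D_{(t,\theta)}(\z^T_{(s,\theta')} Z^T_s)$ to vanish on $\{\theta > f^T(t)\}$; after the masking by $\z^T_{(t,\theta)}$ already present in every integrand, the divergence will be replaced by $\delta(\z^T D_{(t,\theta)}(Z^T))$. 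Using $\iint \z^T_{(t,\theta)}(Z^T_t)^2\, dt\, d\theta = \frac{1}{T}\int_0^T f^T/g^T \,dt$, the linear piece will contribute $\E{v'(F^T)\cdot\frac{1}{T}\int_0^T f^T/g^T\, dt}$ plus a cross piece carrying $\delta(\z^T D_{(t,\theta)}(Z^T))$; matching against $\sigma^2 \E{v'(F^T)}$ and invoking $\|v'\|_\infty \leq 1$ will produce terms~1 and~3. Expanding $(D_{(t,\theta)}F^T)^2$ as pure-square, cross, and $\delta^2$ pieces, and using $\iint \z^T_{(t,\theta)}|Z^T_t|^3\, dt\, d\theta = T^{-3/2}\int f^T/(g^T)^{3/2}\, dt$, will deliver terms~2, ~5, and~4 respectively.

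Part 2 will be a direct specialisation: when $\z^T D_{(t,\theta)}(Z^T) = \z^T D_{(t,0)}(Z^T)$ no longer depends on $\theta$, the identity $\int_0^{+\infty} \1_{\theta\leq f^T(t)}\, d\theta = f^T(t)$ collapses each $d\theta$ integral into the simplified form. For part 3, plugging $f^T=g^T$ will kill the first term of part 2 (since $\sigma^2 = 1 = \frac{1}{T}\int f^T/g^T\, dt$) and turn $f^T/\sqrt{g^T}$ into $\sqrt{f^T}$. Producing the cube $\E{(F^T)^3}$ and the conditional expectation $\EE_t[\cdot]$ will require a further IBP applied to the cross term $\iint \z^T_{(t,\theta)} Z^T_t \delta(\z^T D_{(t,0)}(Z^T))\, dt\, d\theta$, combined with the rewriting $(F^T)^2 = F^T \delta(\z^T Z^T)$: Proposition \ref{prop:IPP} will introduce a second Malliavin derivative which, after a second Heisenberg step, recombines into $\E{(F^T)^3}$ modulo controllable residues; predictability of $\z^T_{(t,\theta)} Z^T_t$ with respect to $\Fcal^N_t$ will then project the compensator piece of $\delta$ onto $\EE_t[\cdot]$, yielding the signed last term.

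The principal obstacle will be the rigorous replacement of $D_{(t,\theta)}(\z^T Z^T)$ by $\z^T D_{(t,\theta)}(Z^T)$ inside the divergence. The product rule for $D$ generically produces a spurious cross contribution $D_{(t,\theta)}(\z^T_{(s,\theta')})\cdot (Z^T_s\circ \eps^+_{(t,\theta)})$, and Assumption \ref{assump: initiale} only kills it outside $\{\theta \leq f^T(t)\}$; showing that its contribution is absorbed will require matching the indicator structure of $\z^T_{(s,\theta')}$ to the gap between $f^T(s)$ and $f^T(s)\circ \eps^+_{(t,\theta)}$, and invoking predictability through the definition of $\Scal$. The other delicate step will be the algebraic reorganisation of part 3 that extracts a cube from two nested divergences while keeping signs consistent, so that the Poisson compensator of $\delta$ is what produces the subtracted conditional-expectation term.
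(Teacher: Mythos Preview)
Your plan for parts 1 and 2 is correct and matches the paper's argument: IBP via Proposition~\ref{prop:IPP}, second-order Taylor expansion of $v$, Heisenberg commutation (Proposition~\ref{prop: heisenberg}) to expand $D_{(t,\theta)}F^T$, and then the reduction of $\delta(D_{(t,\theta)}(\z^T Z^T))$ to $\delta(\z^T D_{(t,\theta)}(Z^T))$. The ``principal obstacle'' you flag is exactly what the paper isolates as Lemma~\ref{lem: magique}: the contribution of $D_{(t,\theta)}(\z^T_{\cdot})[Z^T+D_{(t,\theta)}(Z^T)]$ under the divergence vanishes in expectation once multiplied by $\z^T_{(t,\theta)}$ and any functional $F$ satisfying $\1_{\rho>f^T(s)}D_{(s,\rho)}F=0$, precisely because $D_{(t,\theta)}(\z^T_{(s,\rho)}) = \1_{f^T(s)<\rho\leq f^T(s)\circ\eps^+_{(t,\theta)}}$ lives above the graph of $f^T$. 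Part 2 is then indeed the $\theta$-integrated specialisation.

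Part 3 is where your plan has a real gap. You propose to specialise part 2 and then extract $\E{(F^T)^3}$ by ``a further IBP applied to the cross term'' together with $(F^T)^2=F^T\delta(\z^T Z^T)$. But once absolute values are taken in part~2 no further IBP is available, and even at the pre-absolute-value stage, IBP on that cross term produces $v''$-terms and double integrals, not the cube. The paper instead restarts from Stein's inequality and treats the quadratic Taylor remainder differently (Lemma~\ref{lem: ineq 2}): rather than expanding $|D_{(t,\theta)}F^T|^2$ via Heisenberg as in part~1, it uses the identity
\[
\left|D_{(t,\theta)}F^T\right|^2 \;=\; D_{(t,\theta)}\!\left(\left|F^T\right|^2\right) \;-\; 2F^T\, D_{(t,\theta)}F^T.
\]
The first piece, after IBP against $\z^T Z^T$, \emph{is} $\E{(F^T)^3}$; the second piece, after Heisenberg and Lemma~\ref{lem: magique}, joins the $v'(F^T)$ cross term to give a common factor $(v'(F^T)-2F^T)$ in front of $\int\sqrt{f^T(t)}\,\delta(\z^T D_{(t,0)}(Z^T))\,dt$. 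The signed conditional-expectation term then comes from the $-2F^T$ half via the product-of-divergences identity (Lemma~\ref{lem: prod_div}) applied to $\delta(\z^T Z^T)\,\delta(\z^T D_{(t,0)}(Z^T))$, while the $v'(F^T)$ half is what receives the further IBP and second Heisenberg step to yield the two double integrals. So the mechanism producing the cube --- the $|DF|^2=D(|F|^2)-2F\,DF$ identity --- is missing from your sketch, and without it the reorganisation you describe does not close.
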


\begin{remark}
Note that although a negative sign appears in the expression of the last term, its actual sign remains undetermined and requires further computations to be fully established. In particular, if $g_s^T \circ \eps_{(t,0)} ^+ > g_s^T $ for any $s \geq t$, then $D_{(t,0)}\left(Z^T_s \right)$ is necessarily negative.
\end{remark}

Before giving the proof of Theorem \ref{thm: main}, we present another theorem which is a special case of Theorem \ref{thm: main}. The latter will deal with the case $g^T$ deterministic which, in the literature, is the most employed case.

\begin{theorem}
    \label{thm : Maj_g_determinist}
    Let $f$ and $g$ such that Assumptions $\ref{assump: dans S}$ and $\ref{assump: initiale}$ hold. 
    Suppose in addition, that $g^T$ is deterministic bounded from below and such that
    $$ \sup\limits_{T>0} \E{\frac{1}{T}\int_{0}^T \frac{f^T(t)}{g^T(t)} dt}< +\infty.$$
    Then, there exists $C>0$ such that for any $\sigma^2 \in \RR_+^*$ and for any $T>0$, 
    $$ d_W(F^T,G) \leq \E{  \left|\sigma^2- \frac{1}{T} \int_{0}^T \frac{f^T(t)}{g^T(t)} dt\right|} +\frac{C}{\sqrt{T}}, \quad G \sim \Ncal(0,\sigma^2). $$
\end{theorem}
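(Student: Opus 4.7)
The plan is to show that this statement follows almost immediately from part (1) of Theorem \ref{thm: main}, with the key simplification being that $g^T$ deterministic forces the Malliavin derivative of $Z^T$ to vanish identically.

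First, I would check the hypotheses of Theorem \ref{thm: main}. Assumptions \ref{assump: dans S} and \ref{assump: initiale} are already granted. The extra requirement in part (1) is that $\z^T D_{(t,\theta)}(Z^T) \in \Scal$ for every $(t,\theta)$. Since $g^T$ is deterministic, the process $Z^T_s = \1_{[0,T]}(s)/\sqrt{T g^T(s)}$ does not depend on $\omega$, so by the definition of the Malliavin derivative $D_{(t,\theta)} Z^T_s = Z^T_s \circ \eps_{(t,\theta)}^+ - Z^T_s = 0$ almost surely. Consequently $\z^T D_{(t,\theta)}(Z^T) = 0 \in \Scal$ trivially, and the last three terms of the bound in part (1) of Theorem \ref{thm: main} vanish identically. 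This leaves
\begin{equation*}
d_W(F^T, G) \leq \E{\left|\sigma^2 - \frac{1}{T}\int_0^T \frac{f^T(t)}{g^T(t)} dt\right|} + \frac{1}{T^{3/2}} \E{\int_0^T \frac{f^T(t)}{(g^T(t))^{3/2}} dt}.
\end{equation*}

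The second step is to bound the remaining second term by $C/\sqrt{T}$. Let $m > 0$ be such that $g^T(t) \geq m$ for all $T > 0$ and $t\in[0,T]$. Write $(g^T(t))^{-3/2} = (g^T(t))^{-1} \cdot (g^T(t))^{-1/2} \leq m^{-1/2} \cdot (g^T(t))^{-1}$. Then
\begin{equation*}
\frac{1}{T^{3/2}} \E{\int_0^T \frac{f^T(t)}{(g^T(t))^{3/2}} dt} \leq \frac{1}{\sqrt{m}\,\sqrt{T}} \cdot \E{\frac{1}{T}\int_0^T \frac{f^T(t)}{g^T(t)} dt} \leq \frac{C}{\sqrt{T}},
\end{equation*}
where the last inequality uses the uniform-in-$T$ integrability hypothesis $\sup_{T>0}\E{\frac{1}{T}\int_0^T f^T(t)/g^T(t)\,dt} < +\infty$. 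Combining this with the bound above yields the stated inequality. There is essentially no obstacle here: the only non-trivial input is part (1) of Theorem \ref{thm: main}, and the rest is a two-line calculation exploiting the deterministic nature of $g^T$ together with its uniform lower bound.
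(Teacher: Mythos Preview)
Your proposal is correct and follows essentially the same approach as the paper's own proof: invoke Theorem \ref{thm: main}, use that $g^T$ deterministic makes $D_{(t,\theta)}Z^T=0$ so the divergence terms vanish, and then bound the remaining $T^{-3/2}$ term using the uniform lower bound on $g^T$ together with the sup hypothesis. The only minor difference is that you explicitly verify $\z^T D_{(t,\theta)}(Z^T)=0\in\Scal$, which the paper leaves implicit.
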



\begin{proof}[Proof of Theorem \ref{thm: main}] 
    We divide the proof Theorem \ref{thm: main} in two main parts by regroupig the two first inequality in the same part. Indeed, result $2.$ can be seen as a direct consequence of $1.$ under the assumption $\z^T D_{(t,\theta)}\left( Z^T\right) = \z^T D_{(t,0)}\left( Z^T\right)$. Moreover, $1.$ and $2.$ both consider general Gaussian $\Ncal \left( 0, \sigma^2\right)$ which is not the case for $3.$ where the result holds for standard Gaussian $\Ncal \left( 0, 1\right)$.
    \begin{enumerate}
    \item Let $v\in \mathrsfs{F}_W=\left\{u:\RR \to \RR,\; \textrm{ twice differentiable with } \|u'\|_\infty \leq 1, \; \|u''\|_\infty \leq 2\right\}$. Using the integration by part formula (see Proposition \ref{prop:IPP}), one can get
    $$\E { F^T v(F^T)} = \E{\delta\left(\z^{T} Z^T\right)v(F^T)} = \E { \iint_{\RR_+^2} \z_{(t,\theta)}^{T} Z_t^T D_{(t,\theta)}\left(v(F^T) \right) dtd\theta}.$$
    Besides, Taylor expansion formula gives for any $(t,\theta)\in \RR_+^2$,
    $$ D_{(t,\theta)}(v(F^T)) = v(F^T \circ \eps_{(t,\theta)}^+) - v(F^T) = v'(F^T) D_{(t,\theta)}(F^T) + \frac{1}{2} v''(\hat F^T) \vert D_{(t,\theta)}(F^T) \vert^2,$$
    where $\hat F^T$ be a random element between $F^T \circ \eps_{(t,\theta)}^+$ and $F^T$.\\
    Combining the two equations, it holds that
    \begin{align*}
        \E{F^T v(F^T)} &= \E{ \iint_{\RR_+^2} \z_{(t,\theta)}^{T} Z_t^T v'(F^T)D_{(t,\theta)}\left(F^T \right) dtd\theta} \\
        & \quad + \frac{1}{2} \E {\iint_{\RR_+^2} \z_{(t,\theta)}^{T} Z_t^T v''(\hat F^T)\left|D_{(t,\theta)}\left(F^T \right) \right|^2 dtd\theta}.
    \end{align*}
    We denote by $A_1$ and $A_2$ the right-hand terms of the equation, i.e.
    \begin{align*}
        A_1 &:= \E{\iint_{\RR_+^2} \z_{(t,\theta)}^{T} Z_t^T v'(F^T)D_{(t,\theta)}\left(F^T \right) dtd\theta}\\
        \text{and} \quad A_2 &:= \frac{1}{2} \E { \iint_{\RR_+^2} \z_{(t,\theta)}^{T} Z_t^T v''(\hat F^T)\left|D_{(t,\theta)}\left(F^T \right) \right|^2 dtd\theta}.
    \end{align*}

    In both quantities, an important factor to be considered is the Malliavin derivative of $F^T$.
    To deal with this term, one can use Proposition \ref{prop: heisenberg} combining with the property of the Malliavin calculus to get
    \begin{align}
        D_{(t,\theta)}\left( F^T\right) &= D_{(t,\theta)}\left( \delta \left( \z^T Z^T \right) \right) \nonumber\\
        &= \z_{(t,\theta)}^T Z_t^T + \delta \left( D_{(t,\theta)}\left( \z^T\right) \left[ Z^T + D_{(t,\theta)}\left( Z^T\right)\right] \right) \label{eq: Heis et malliavin} \\ 
        &\quad + \delta \left( \z^T D_{(t,\theta)}\left(Z^T \right)\right). \nonumber
    \end{align}
    Thus, we substitute this equality in $A_1$ to obtain
    \begin{align*}
        A_1 &= \E { \iint_{\RR_+^2} \z_{(t,\theta)}^{T} \left|Z_t^T\right|^2 v'(F^T) dtd\theta}\\
        &\quad + \E { \iint_{\RR_+^2} \z_{(t,\theta)}^{T} Z_t^T v'(F^T)\delta \left( D_{(t,\theta)}\left( \z^T\right) \left[ Z^T + D_{(t,\theta)}\left( Z^T\right)\right] \right) dtd\theta}\\
        &\quad + \E { \iint_{\RR_+^2} \z_{(t,\theta)}^{T} Z_t^T v'(F^T)\delta \left( \z^T D_{(t,\theta)}\left(Z^T \right)\right) dtd\theta}.
    \end{align*}

    Moreover, Lemma \ref{lem: magique} in Section \ref{sec:technical} gives that
    $$\E { \iint_{\RR_+^2} \z_{(t,\theta)}^{T} Z_t^T v'(F^T)\delta \left( D_{(t,\theta)}\left( \z^T\right) \left[ Z^T + D_{(t,\theta)}\left( Z^T\right)\right] \right) dtd\theta} =0$$
    and so $A_1$ is reduced to the following:
    \begin{align}
        A_1 &= \E { \iint_{\RR_+^2} \z_{(t,\theta)}^{T} \left|Z_t^T\right|^2 v'(F^T) dtd\theta}+ \E{ \iint_{\RR_+^2} \z_{(t,\theta)}^{T} Z_t^T v'(F^T)\delta \left( \z^T D_{(t,\theta)}\left(Z^T \right)\right) dtd\theta} \nonumber \\
        &= \E{ \frac{v'(F^T)}{T} \int_{0}^T \frac{f^T(t)}{g^T(t)} dt}+ \E{ \iint_{\RR_+^2} \z_{(t,\theta)}^{T} Z_t^T v'(F^T)\delta \left( \z^T D_{(t,\theta)}\left(Z^T \right)\right) dtd\theta}.
        \label{eq: maj_A_1}
    \end{align}
    We can use the same methodology on $A_2$ and so we get
    \begin{align}
        A_2 &= \frac{1}{2}\E{ \iint_{\RR_+^2} \z_{(t,\theta)}^{T} \left(Z_t^T \right)^3 v''(\hat F^T) dtd\theta} \nonumber \\
        &\quad +\frac{1}{2} \E{ \iint_{\RR_+^2} \z_{(t,\theta)}^{T} Z_t^T v''(\hat F^T)\left|\delta \left( \z^T D_{(t,\theta)}\left(Z^T \right)\right) \right|^2 dtd\theta}  \label{eq: maj_A_2}\\
        & \quad + \E{ \iint_{\RR_+^2} \z_{(t,\theta)}^{T} \left|Z_t^T \right|^2 v''(\hat F^T)\delta \left( \z^T D_{(t,\theta)}\left(Z^T \right)\right) dtd\theta}. \nonumber 
    \end{align}
    Therefore, concatenating \eqref{eq: maj_A_1} and \eqref{eq: maj_A_2}, we have
    \begin{align*}
        \E{ \sigma^2v'(F^T)-F^T v(F^T)} & =\E{ v'(F^T) \left(\sigma^2- \frac{1}{T} \int_{0}^T \frac{f^T(t)}{g^T(t)} dt\right)}\\
        & \quad -\E{ \iint_{\RR_+^2} \z_{(t,\theta)}^{T} Z_t^T v'(F^T)\delta \left( \z^T D_{(t,\theta)}\left(Z^T \right)\right) dtd\theta} \\
        & \quad -\frac{1}{2}\E{ \iint_{\RR_+^2} \z_{(t,\theta)}^{T} \left(Z_t^T \right)^3 v''(\hat F^T) dtd\theta}\\
        & \quad -\frac{1}{2} \E{ \iint_{\RR_+^2} \z_{(t,\theta)}^{T} Z_t^T v''(\hat F^T)\left|\delta \left( \z^T D_{(t,\theta)}\left(Z^T \right)\right) \right|^2 dtd\theta}\\
        & \quad - \E{ \iint_{\RR_+^2} \z_{(t,\theta)}^{T} \left|Z_t^T \right|^2 v''(\hat F^T)\delta \left( \z^T D_{(t,\theta)}\left(Z^T \right)\right) dtd\theta}.
    \end{align*}
    Since $v\in \mathrsfs{F}_W$, we have $\Vert v' \Vert_{\infty} \leq 1$ and $\Vert v'' \Vert_{\infty} \leq 2$, and so we obtain
    \begin{align*}
        \E{ \sigma^2v'(F^T)-F^T v(F^T)} & \leq \E{  \left|\sigma^2- \frac{1}{T} \int_{0}^T \frac{f^T(t)}{g^T(t)} dt\right|}+\E{ \int_{\RR_+^2} \z_{(t,\theta)}^{T} \left|Z_t^T \right|^3  dtd\theta}\\
        & \quad +\E{ \iint_{\RR_+^2} \z_{(t,\theta)}^{T} Z_t^T \left|\delta \left( \z^T D_{(t,\theta)}\left(Z^T \right)\right) \right| dtd\theta} \\
        & \quad + \E{ \iint_{\RR_+^2} \z_{(t,\theta)}^{T} Z_t^T \left|\delta \left( \z^T D_{(t,\theta)}\left(Z^T \right)\right) \right|^2 dtd\theta}\\
        & \quad +2 \E{ \iint_{\RR_+^2} \z_{(t,\theta)}^{T} \left|Z_t^T \right|^2 \left|\delta \left( \z^T D_{(t,\theta)}\left(Z^T \right)\right) \right|dtd\theta}.
    \end{align*}
    Replacing $Z^T$ and $\z^T$ by their definitions, we get the following:
    \begin{align*}
        \E{ \sigma^2v'(F^T)-F^T v(F^T)} & \leq \E{  \left|\sigma^2- \frac{1}{T} \int_{0}^T \frac{f^T(t)}{g^T(t)} dt\right|}+\frac{1}{T^{3/2}} \E{\int_{0}^T \frac{f^T(t)}{(g^T(t))^{3/2}}  dt}\\
        & \quad +\E{ \iint_{\RR_+^2} \z_{(t,\theta)}^{T} Z_t^T \left|\delta \left( \z^T D_{(t,\theta)}\left(Z^T \right)\right) \right| dtd\theta} \\
        & \quad + \E{ \iint_{\RR_+^2} \z_{(t,\theta)}^{T} Z_t^T \left|\delta \left( \z^T D_{(t,\theta)}\left(Z^T \right)\right) \right|^2 dtd\theta}\\
        & \quad +2 \E{ \iint_{\RR_+^2} \z_{(t,\theta)}^{T} \left|Z_t^T \right|^2 \left|\delta \left( \z^T D_{(t,\theta)}\left(Z^T \right)\right) \right|dtd\theta}.
    \end{align*}
    Given that this holds for any $v\in \mathrsfs{F}_W$ (defined in \eqref{eq: def_FW}), we can take the supremum of the left-hand term, yielding the result.

    \item We now suppose that $\z^T D_{(t,\theta)}\left( Z^T\right) = \z^T D_{(t,0)}\left( Z^T\right)$ for any $(t,\theta)\in \RR_+^2$. Thanks to this assumption, we can integrate with respect to $\theta$ in the previous inequality and so we get
    \begin{align*}
        d_W(F^T,G) &\leq \E{  \left|\sigma^2- \frac{1}{T} \int_{0}^T \frac{f^T(t)}{g^T(t)} dt\right|} +\frac{1}{T^{3/2}} \E{\int_{0}^T \frac{f^T(t)}{(g^T(t))^{3/2}}  dt}\\
            & \quad + \frac{1}{\sqrt{T}}\E{ \int_{0}^T \frac{f^T(t)}{\sqrt{g^T(t)}}\left|\delta \left( \z^T D_{(t,0)}\left(Z^T \right)\right) \right| dt} \\
            & \quad +\frac{1}{\sqrt{T}}\E{ \int_{0}^T \frac{f^T(t)}{\sqrt{g^T(t)}} \left|\delta \left( \z^T D_{(t,0)}\left(Z^T \right)\right) \right|^2 dt}\\
            & \quad + \frac{2}{T}\E{ \int_{0}^T \frac{f^T(t)}{g^T(t)} \left|\delta \left( \z^T D_{(t,0)}\left(Z^T \right)\right) \right|dt}.
    \end{align*}

    \item Suppose that $f^T=g^T$(note that this assumption implies that $\sigma^2=1$ since Var$\left[ F^T\right] =1$). In order to prove $3.$, we start by making use of Lemma \ref{lem: ineq 2} which give for $v\in \mathrsfs{F}_W$ and $T>0$,
    \begin{align*}
        \E { F^T v(F^T)-v'\left( F^T\right)} 
        & \leq  \E{\left( F^T\right)^3} + \E{\left(v'\left(F^T \right)-2F^T \right)\iint_{\RR^2_+}  \z^T_{(t,\theta)} Z^T \delta \left( D_{(t,\theta)}\left( \z^T Z^T\right)\right) dt d\theta}.
    \end{align*}
    We now substitutes $\delta \left( D_{(t,\theta)}\left( \z^T Z^T\right)\right)$ by $\delta \left( \z^T D_{(t,\theta)}\left(  Z^T\right)\right)$ according to Lemma \ref{lem: magique} and so we have
    \begin{align*}
        \E { F^T v(F^T)-v'\left( F^T\right)} & \leq  \E{\left( F^T\right)^3} + \E{\left(v'\left(F^T \right)-2F^T \right)\iint_{\RR^2_+}  \z^T_{(t,\theta)} Z^T \delta \left( \z^T D_{(t,\theta)}\left( Z^T\right)\right) dt d\theta}\\
         &\leq  \E{\left( F^T\right)^3} + \frac{1}{\sqrt{T}}\E{\left(v'\left(F^T \right)-2F^T \right)\int_{0}^T  \sqrt{f^T(t)}\delta \left(\z^T D_{(t,0)}\left(  Z^T\right)\right) dt }.
    \end{align*}
    Let us denote by $B_1$ and $B_2$ the following quantities:
    \begin{align*}
        B_1 &= \frac{1}{\sqrt{T}}\E{v'\left(F^T \right)\int_{0}^T  \sqrt{f^T(t)}\delta \left(\z^T D_{(t,0)}\left(  Z^T\right)\right) dt }\\
        \text{and} \quad B_2 &= -\frac{2}{\sqrt{T}}\E{F^T\int_{0}^T  \sqrt{f^T(t)}\delta \left(\z^T D_{(t,0)}\left(  Z^T\right)\right) dt }.
    \end{align*}
    We first deal with $B_2$ by noticing that 
    $$B_2 = -\frac{2}{\sqrt{T}}\E{\int_{0}^T  \sqrt{f^T(t)}\delta\left( \z^T Z^T \right)\delta \left(\z^T D_{(t,0)}\left(  Z^T\right)\right) dt }. $$
   Thereupon, we may employ Lemma $\ref{lem: prod_div}$ in order to get
    \begin{align}
        B_2= -\frac{2}{T}\E{\int_{0}^T  \sqrt{f^T(t)} \EE_t\left[\int_t^T \sqrt{f^T(s)} D_{(t,0)}\left(Z^T_s \right) ds \right] dt }. \label{eq: upperbound_B_2}
    \end{align} 
    On the other hand, we make use of the integration by part formula (see Proposition \ref{prop:IPP}) on $B_1$ to compute
    \begin{align*}
        B_1 &= \frac{1}{\sqrt{T}} \int_{0}^T \E{ \iint_{\RR_+^2}  D_{(s,\rho)}\left(\sqrt{f^T(t)} v'\left(F^T \right) \right)\z_{(s,\rho)}^T D_{(t,0)}\left( Z_s^T\right) ds d\rho} dt\\
         &= \frac{1}{\sqrt{T}} \int_{0}^T \E{ \iint_{\RR_+^2} \sqrt{f^T(t)} D_{(s,\rho)}\left(v'\left(F^T \right) \right)\z_{(s,\rho)}^T D_{(t,0)}\left( Z_s^T\right) ds d\rho} dt .
    \end{align*}
    Moreover, thanks to Taylor expansion formula, there exists $\hat{F}$ between $F^T$ and $F^T\circ \eps_{(s,\rho)}^+$ such that
    $$ D_{(s,\rho)}\left(v'\left(F^T \right) \right)  = D_{(s,\rho)}\left(F^T \right) v''(\hat{F}).$$
    And so
    \begin{align*}
        B_1 &= \frac{1}{\sqrt{T}} \int_{0}^T \E{ \iint_{\RR_+^2} \sqrt{f^T(t)}\z_{(s,\rho)}^T v''(\hat{F}^T)  D_{(s,\rho)}\left(F^T \right)D_{(t,0)}\left( Z_s^T\right) ds d\rho} dt .
    \end{align*}
    We now employ the same methodology as for $A_1$ and we apply Equation \eqref{eq: Heis et malliavin} followed by Lemma \ref{lem: magique} to get
    \begin{align*}
        B_1 &= \frac{1}{\sqrt{T}} \int_{0}^T \E{ \iint_{\RR_+^2} \sqrt{f^T(t)}\z_{(s,\rho)}^T v''(\hat{F}^T)  Z^T_s D_{(t,0)}\left( Z_s^T\right) ds d\rho} dt \\
        & \quad + \frac{1}{\sqrt{T}} \int_{0}^T \E{ \iint_{\RR_+^2} \sqrt{f^T(t)}\z_{(s,\rho)}^T v''(\hat{F}^T)  \delta\left( \z^T D_{(s,0)}\left(Z^T\right)\right) D_{(t,0)}\left( Z_s^T\right) ds d\rho} dt\\
        &= \frac{1}{T} \int_{0}^T \int_{t}^T\E{  \sqrt{f^T(t)}\sqrt{f^T(s)}  v''(\hat{F}^T)   D_{(t,0)}\left( Z_s^T\right) }ds dt \\
        & \quad + \frac{1}{\sqrt{T}} \int_{0}^T\int_{t}^T \E{  \sqrt{f^T(t)}f^T(s) v''(\hat{F}^T)  \delta\left( \z^T D_{(s,0)}\left(Z^T\right)\right) D_{(t,0)}\left( Z_s^T\right) } ds dt.
    \end{align*}
    Using the boundedness of $v''$, we obtain
    \begin{align}
        B_1 &\leq \frac{1}{T} \int_{0}^T \int_{t}^T\E{  \sqrt{f^T(t)}\sqrt{f^T(s)} \left| D_{(t,0)}\left( Z_s^T\right) \right| }ds dt \nonumber \\
        & \quad + \frac{1}{\sqrt{T}} \int_{0}^T\int_{t}^T \E{  \sqrt{f^T(t)}f^T(s) \left|  \delta\left( \z^T D_{(s,0)}\left(Z^T\right)\right) D_{(t,0)}\left( Z_s^T\right)\right| } ds dt. \label{eq: upperbound_B_1}
    \end{align}
    Combining the upper bound \eqref{eq: upperbound_B_1} with \eqref{eq: upperbound_B_2}, we get
    \begin{align*}
        &\E { F^T v(F^T)-v'\left( F^T\right)} \\
        & \leq  \E{\left( F^T\right)^3} +\frac{1}{T} \int_{0}^T \int_{t}^T\E{  \sqrt{f^T(t)}\sqrt{f^T(s)} \left| D_{(t,0)}\left( Z_s^T\right) \right| }ds dt \\
        & \quad + \frac{1}{\sqrt{T}} \int_{0}^T\int_{t}^T \E{  \sqrt{f^T(t)}f^T(s) \left|  \delta\left( \z^T D_{(s,0)}\left(Z^T\right)\right) D_{(t,0)}\left( Z_s^T\right)\right| } ds dt\\
        &\quad -\frac{2}{T}\E{\int_{0}^T  \sqrt{f^T(t)} \EE_t\left[\int_t^T \sqrt{f^T(s)} D_{(t,0)}\left(Z^T_s \right) ds \right] dt } .
    \end{align*}
    The latter is true for any $v\in \mathrsfs{F}_W$, so passing to the supremum, we have the expected result.
    \end{enumerate}
\end{proof}

We now give the proof of Theorem \ref{thm : Maj_g_determinist}.
\begin{proof}[Proof of Theorem \ref{thm : Maj_g_determinist}]
Since $g^T$ is a deterministic function, its Malliavin derivative is null. Thereupon, for any $(t,\theta)\in \RR_+^2$, we have 
    $$D_{(t,\theta)} Z^T = 0.$$
Applying Theorem \ref{thm: main}, we obtain:
    \begin{align*}
        d_W(F^T,G) \leq \E{\left| \sigma^2- \frac{1}{T} \int_{0}^T \frac{f^T(t)}{g^T(t)} dt\right|} + +\frac{1}{T^{3/2}} \E{\int_{0}^T \frac{f^T(t)}{(g^T(t))^{3/2}}  dt}.
    \end{align*}
    Moreover, using the fact that $g^T$ is bounded from below by some constant $\mu >0$ combining with the fact that $\frac{1}{T}\E{\int_0^T \frac{f^T(t)}{g^T(t)} dt} \leq \overline{C}$, with $\overline{C}>0$,  we have
    \begin{align*}
        d_W(F^T,G) &\leq \E{\left| \sigma^2- \frac{1}{T} \int_{0}^T \frac{f^T(t)}{g^T(t)} dt\right|} + \frac{1}{\sqrt{\mu}\sqrt{T}} \E{\frac{1}{T}\int_{0}^T \frac{f^T(t)}{g^T(t)} dt}\\
        &\quad \leq  \E{\left| \sigma^2- \frac{1}{T} \int_{0}^T \frac{f^T(t)}{g^T(t)} dt\right|} + \frac{\overline{C}/\sqrt{\mu}}{\sqrt{T}}.
    \end{align*}
\end{proof}

Our third main result is a generalisation of Theorem \ref{thm : Maj_g_determinist}. It gives a general quantification for counting processes which can be represented using a Poisson imbedding.

\begin{proposition}
\label{prop:main2}
Let $T >0$ and denote by $\Delta_T$ a coefficient such that $\left|\Delta_T\right|^2 \geq \mu T$ for some constant $\mu \in \RR_+^*$.
Let $\lambda:=(\lambda_t)_{t\geq 0}$ be a $\FF^N$-predictable non-negative process such that 
\begin{enumerate}
    \item $ \E{\left(\int_0^T \lambda_t dt\right)^2}<+\infty \quad \text{and} \quad \E{\frac{1}{\left|\Delta_T\right|^2} \int_0^T \lambda_t dt}< C $;
    \item for any $(t,\theta) \in \RR_+^2$, $\ind{\theta \geq \lambda_t} \lambda\circ \eps_{(t,\theta)}^+ = \ind{\theta \geq \lambda_t} \lambda$.
\end{enumerate}
Consider the counting process $H:=(H_t)_{t\geq 0}$ defined as : 
$$ H_t:=\iint_{(0,t]\times \RR_+} \ind{\theta \leq \lambda_s} N(ds,d\theta), \quad t\geq 0 $$
so that $H$ has intensity $\lambda$ (in the sense that the process $M:=(M_t)_{t\geq 0}$ defined as $M_t:=H_t - \int_0^t \lambda_s ds$ is a $\FF^N$-martingale).\\
Let $\sigma^2>0$ and $s_T:= \E{\left|\sigma^2 - \frac{1}{\Delta_T^2} \int_0^T \lambda_t dt \right|}$. Then 
$$ d_W\left(\frac{H_T-\int_0^T \lambda_t dt}{\Delta_T},G\right) \leq C\left(\frac{1}{\Delta_T} \vee s_T\right); \quad G \sim \Ncal(0,\sigma^2). $$
\end{proposition}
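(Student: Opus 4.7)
The plan is to apply part 1 of Theorem \ref{thm: main} by recognizing $\frac{H_T - \int_0^T \lambda_t dt}{\Delta_T}$ as an instance of the general functional $F^T$ defined in \eqref{def: F^T}. Concretely, I take $f^T(t) := \lambda_t$ together with the (deterministic) constant $g^T(t) := \Delta_T^2/T$ on $[0,T]$. Then $\sqrt{T g^T(t)} = \Delta_T$, and by Notation \ref{not: f and g},
$$F^T = \iint_{[0,T]\times \RR_+} \ind{\theta \leq \lambda_t} \frac{1}{\Delta_T} \bigl(N(dt, d\theta) - dt\, d\theta\bigr) = \frac{H_T - \int_0^T \lambda_t\, dt}{\Delta_T}.$$

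Next I would check the hypotheses of Theorem \ref{thm: main}. Assumption \ref{assump: dans S} reduces, via \eqref{rem: hypo1}, to the finiteness of $\frac{T}{\Delta_T^2} \E{\int_0^T \lambda_t\, dt}$ and $\frac{T}{\Delta_T^2} \E{\left(\int_0^T \lambda_t\, dt\right)^2}$, both of which follow from condition (1) combined with $\Delta_T^2 \geq \mu T$. Assumption \ref{assump: initiale} is exactly condition (2) on $f^T = \lambda$, and is automatic for the deterministic $g^T$.

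The decisive observation is that, because $g^T$ is deterministic, $D_{(t,\theta)} Z^T = 0$ for every $(t,\theta) \in \RR \times \RR_+$, so the three terms of the bound in part 1 of Theorem \ref{thm: main} involving $\delta(\z^T D_{(t,\theta)}(Z^T))$ all vanish identically. What survives is
$$d_W(F^T, G) \leq \E{\left|\sigma^2 - \frac{1}{T}\int_0^T \frac{\lambda_t}{\Delta_T^2/T}\, dt\right|} + \frac{1}{T^{3/2}} \E{\int_0^T \frac{\lambda_t}{(\Delta_T^2/T)^{3/2}}\, dt}.$$
The first summand collapses exactly to $s_T$. The second rewrites as $\frac{1}{\Delta_T^3} \E{\int_0^T \lambda_t\, dt} = \frac{1}{\Delta_T} \E{\frac{1}{\Delta_T^2} \int_0^T \lambda_t\, dt} \leq \frac{C}{\Delta_T}$ by condition (1). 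Bounding $s_T + C/\Delta_T \leq 2\max(1,C)\, (s_T \vee 1/\Delta_T)$ yields the claim.

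The argument is essentially a translation exercise, so there is no genuine obstacle; the one substantive move is the choice $g^T = \Delta_T^2/T$, which tunes the generic normalization $\sqrt{T g^T(t)}$ to match the prescribed $\Delta_T$ and converts the uniform lower bound $\Delta_T^2 \geq \mu T$ into precisely the lower bound on $g^T$ needed to control the third-moment-type correction in Theorem \ref{thm: main}.
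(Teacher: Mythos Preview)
Your proof is correct and follows essentially the same approach as the paper: the paper's proof simply says to repeat the argument of Theorem \ref{thm : Maj_g_determinist} with $f^T:=\lambda$ and $g^T:=\Delta_T^2/T$, which is exactly the choice you make, and your observation that $g^T$ is deterministic (so $D_{(t,\theta)} Z^T=0$) together with the resulting two-term bound reproduces that argument in full.
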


\begin{proof}
Repeat the proof of Theorem \ref{thm : Maj_g_determinist} with $f^T:=\lambda$ and $g^T:=\frac{\Delta_T^2}{T}$.
\end{proof}


\section{Applications to non-linear Hawkes processes}
\label{sec: App Hawkes}
Our investigation, outlined in Section \ref{sec: Notation}, relies on analyzing the Malliavin operator concerning quantities $\z$ and $Z$. We extend these analyses to cases where $f$ and $g$ are defined within the framework of a Hawkes process. Initially, we introduce the concepts of stationary Hawkes processes and those with an empty history. Subsequently, we draw comparisons between these two types of Hawkes processes, particularly when the kernel is non-negative. Furthermore, we present results from Malliavin's calculus applied to Hawkes processes (either with empty history or the stationary version). Lastly, we apply Theorem \ref{thm : Maj_g_determinist} within the context of Hawkes processes to establish an explicit convergence rate of $F^T$ to a centered Gaussian distribution, under certain assumptions regarding $h$ or $\phi$. It is important to note that all our findings are pertinent to non-linear Hawkes processes.

\subsection{Non-linear Hawkes processes}
The goal of this section is to obtain convergence rate between a Gaussian distribution and $F^T$ in the context of non-linear Hawkes processes. To do so, it is important to define the stationary Hawkes processes since the variance of $F^T$ converges towards the expectation of the intensity of a stationary Hawkes process.
We naturally begin this section with a definition of Hawkes processes with $(\Upsilon= - \infty)$ or without $(\Upsilon>-\infty)$ past history. 

\begin{definition}[Hawkes process, \cite{bremaud_stability_1996}]
\label{def:Hawkes}
Let $(\Omega, \FF, \PP)$ be a general probability space.
Let $\mu\in \RR$, $\phi:\RR_+ \to \RR$ and $h: \RR \to \RR_+$. A standard Hawkes process $H^{(\Upsilon)}:=(H^{(\Upsilon)}_t)_{t \geq 0}$ with parameters $h$ and $\phi$ is a counting process such that   
\begin{itemize}
\item[(i)] $H^{(\Upsilon)}_0=0,\quad \PP-a.s.$,
\item[(ii)] its ($\FF$-predictable) intensity process is given by
$$\lambda^{(\Upsilon)}_t:=h \left(\mu + \int_{(\Upsilon,t)} \phi(t-s) dH^{(\Upsilon)}_s\right), \quad t\geq \Upsilon,$$
that is for any $\Upsilon \leq s \leq t $ and $A \in \Fcal_s$,
$$ \E{\1_A (H^{(\Upsilon)}_t-H^{(\Upsilon)}_s)} = \E{\int_{(s,t]} \1_A \lambda^{(\Upsilon)}_r dr }.$$
\end{itemize}
\end{definition}

For the sake of this paper, we make use of the following notation:
\begin{notation}
   \begin{itemize}
    \item[$\bullet$] We will indicate by $H^{(\Upsilon)} =H^{(-\infty)} =:  H^\infty$ and $\lambda^{(\Upsilon)} = \lambda^{(-\infty)} = \lambda^{\infty}$ when we deal with stationary Hawkes processes. 
    \item[$\bullet$] We will indicate by $H^{(\Upsilon)} =H^{(0)} =:  H$ and $\lambda^{(\Upsilon)} = \lambda^{(0)} = \lambda$ when we deal with Hawkes processes with empty past history. 
\end{itemize} 
\end{notation}

An important version of Hawkes processes are the linear ones which were introduced by Hawkes \cite{hawkes_spectra_1971}. 
\begin{definition}[linear Hawkes process] We say that a Hawkes process is linear when $\mu>0$ and $h$ is of the form $ h(x) = x$.
\end{definition}

Inspired by \cite{bremaud_stability_1996}, we introduce the following technical assumption on $\phi$ and $h$:
\begin{assumption} \label{assump: noyau}
    We assume that: \begin{enumerate}
        \item $\phi : \RR_+ \to \RR$ is locally bounded and such that
        $$\left\Vert \phi\right\Vert_{1}:= \int_0^{\infty} |\phi(t)| dt < +\infty \quad \text{and} \quad m:= \int_0^{\infty} t |\phi(t) | dt < \infty$$
        \item $h: \RR \to \RR_+$ is positive and $\alpha$-Lipschitz, i.e. for any $(x,y)\in \RR$, $\left| h(x) - h(y) \right|\leq 
        \alpha \left| x-y \right|$  with $\alpha \left\Vert \phi\right\Vert_{1} < 1$.
    \end{enumerate}
\end{assumption}

\paragraph{Stationary Hawkes processes}~\\
The definition provided before is not the one used in this paper. Instead, we opt for representing the Hawkes process through a solution of a SDE. This representation can be found in \cite{bremaud_stability_1996} and constitute our chosen approach. Specifically, they established the following theorem and corollary:

\begin{theorem}
    \label{th:BM}
    Under Assumption \ref{assump: noyau}, the SDE below admits a unique $\FF^N$-predictable solution $\lambda^{\infty}$. 
    \begin{equation}
    \label{eq:Int_Hawkes_station}
    \lambda^{\infty}_t = h \left(\mu + \iint_{(-\infty,t) \times \RR_+} \phi(t-u)\ind{\theta \leq \lambda_u^{\infty}} N(du,d\theta) \right) ,\quad t \in \RR_+ .
    \end{equation}
\end{theorem}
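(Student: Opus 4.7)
The plan is to construct $\lambda^\infty$ as the unique fixed point of a Picard iteration on a suitable space of $\FF^N$-predictable, stationary, nonnegative processes. Define the operator
$$(\Psi \lambda)_t := h\left(\mu + \iint_{(-\infty,t)\times \RR_+} \phi(t-u)\ind{\theta \le \lambda_u} N(du,d\theta)\right), \quad t \in \RR,$$
and seek $\lambda^\infty$ with $\lambda^\infty = \Psi \lambda^\infty$. Let $\Mcal$ denote the space of nonnegative, $\FF^N$-predictable, stationary processes $\lambda$ with finite norm $\|\lambda\|_* := \sup_{t\in \RR} \E{\lambda_t}$. Nonnegativity and predictability of $\Psi \lambda$ are immediate from $h \ge 0$ and from the fact that the stochastic integral is built on $(-\infty,t)$; stationarity is inherited from the time-shift invariance of $N$ and of its filtration.

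First I would verify stability on a closed ball of $\Mcal$. Writing $h(x) \le h(\mu) + \alpha|x-\mu|$ and applying Campbell's formula gives
$$\E{(\Psi \lambda)_t} \le h(\mu) + \alpha \int_{-\infty}^t |\phi(t-u)|\, \E{\lambda_u}\, du \le h(\mu) + \alpha\|\phi\|_1 \|\lambda\|_*,$$
so $\|\Psi \lambda\|_* \le h(\mu) + \alpha\|\phi\|_1\|\lambda\|_*$. In particular, the closed ball of radius $R := h(\mu)/(1-\alpha\|\phi\|_1)$ in $\Mcal$ is left invariant by $\Psi$.

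Next I would establish the contraction estimate. For $\lambda,\lambda' \in \Mcal$, using the elementary identity $\int_{\RR_+} |\ind{\theta \le a} - \ind{\theta \le b}|\, d\theta = |a-b|$ for $a,b\ge 0$ together with the $\alpha$-Lipschitz property of $h$,
$$\E{|(\Psi\lambda)_t - (\Psi\lambda')_t|} \le \alpha \int_{-\infty}^t |\phi(t-u)|\, \E{|\lambda_u - \lambda'_u|}\, du \le \alpha\|\phi\|_1 \|\lambda - \lambda'\|_*.$$
Taking the supremum in $t$ yields $\|\Psi \lambda - \Psi \lambda'\|_* \le \alpha\|\phi\|_1\,\|\lambda - \lambda'\|_*$, and since $\alpha\|\phi\|_1 < 1$ by Assumption \ref{assump: noyau}, Banach's fixed-point theorem applied to the invariant closed ball produces a unique fixed point $\lambda^\infty$, which then solves \eqref{eq:Int_Hawkes_station}. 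Uniqueness in the full class of solutions with $\sup_t \E{\lambda_t} < \infty$ also follows directly from the same contraction inequality.

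The main obstacle is making the fixed-point setup rigorous across the three properties of stationarity, predictability and integrability simultaneously. Concretely, one must verify that the Picard iterates $\lambda^{(n+1)} := \Psi \lambda^{(n)}$, starting e.g. from $\lambda^{(0)} \equiv h(\mu)$, all belong to $\Mcal$ (which uses local boundedness of $\phi$ to make the stochastic integral well defined at each step and $\phi \in L^1$ to control the iteration uniformly in $t$), and that their $L^1$ limit inherits stationarity and $\FF^N$-predictability; the time-shift invariance of $\FF^N$ is essential here. Once these points are settled, the contraction is soft and the argument is classical.
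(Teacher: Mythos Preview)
The paper does not supply its own proof of this theorem: it is quoted as a result of Br\'emaud and Massouli\'e \cite{bremaud_stability_1996}, and your Picard--contraction scheme in the $L^\infty_t(L^1_\omega)$ norm is exactly their argument. Your sketch is correct, and the caveats you raise at the end (completeness of $\Mcal$, that the $L^1$ limit of the iterates remains $\FF^N$-predictable and stationary, well-posedness of the stochastic integral at each step) are precisely the points that require care; once these are settled the contraction is routine. Note that, as you already observe, uniqueness is obtained only within the class of predictable solutions with $\sup_t \E{\lambda_t}<\infty$, which is implicit in the paper's statement (cf.\ the corollary that follows it).

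For contrast, the paper's Appendix~\ref{app : volterra} proves the empty-history analogue (Proposition~\ref{prop: existence}) by a different, pathwise route: it builds the solution jump by jump via Ogata thinning rather than by a functional fixed point, and obtains the moment bound a posteriori from a deterministic Volterra comparison. That construction is more explicit but does not yield stationarity; your fixed-point approach is the natural one for the stationary regime.
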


\begin{corollary}
    Suppose Assumption \ref{assump: noyau} holds. We denote by $\lambda^{\infty}$ the unique solution of \eqref{eq:Int_Hawkes_station} and by $H^{\infty}$ his associated counting process, i.e.  
    $$H^{\infty}_t := \iint_{(-\infty,t]\times \RR_+} \1_{\{\theta \leq \lambda^{\infty}_s\}} N(ds,d\theta), \quad t\in \RR_+.$$
    Then, $H^{\infty}$ is a Hawkes process with finite average intensity and whose intensity is $\lambda^{\infty}$.
\end{corollary}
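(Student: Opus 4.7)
The plan is to separate the claim into two ingredients: (a) $H^\infty$ is a Hawkes process in the sense of Definition \ref{def:Hawkes} with intensity $\lambda^\infty$, and (b) the average intensity $\E{\lambda^\infty_t}$ is finite.

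For (a), I would invoke the Poisson imbedding principle recalled at the end of Section \ref{sec: Notation}: since $\lambda^\infty$ is a non-negative $\FF^N$-predictable process, the counting process defined by thinning, $H^\infty_t = \iint_{(-\infty,t]\times\RR_+} \ind{\theta \leq \lambda^\infty_s} N(ds, d\theta)$, admits $\lambda^\infty$ as its $\FF^N$-stochastic intensity, which is the compensator characterisation appearing in Definition \ref{def:Hawkes}(ii). Rewriting the Poisson integral in \eqref{eq:Int_Hawkes_station} as a counting-process integral against $H^\infty$ then gives $\iint_{(-\infty,t)\times\RR_+} \phi(t-u) \ind{\theta \leq \lambda^\infty_u} N(du,d\theta) = \int_{(-\infty,t)} \phi(t-s)\, dH^\infty_s$, so \eqref{eq:Int_Hawkes_station} is exactly the fixed-point relation of Definition \ref{def:Hawkes}(ii) with $\Upsilon = -\infty$.

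For (b), uniqueness of $\lambda^\infty$ combined with the translation invariance in law of $N$ forces the law of $\lambda^\infty_t$ not to depend on $t$; set $m^* := \E{\lambda^\infty_0}$. The $\alpha$-Lipschitz property of $h\geq 0$ yields $h(x) \leq h(0) + \alpha|x|$, so taking expectation in \eqref{eq:Int_Hawkes_station} and invoking the compensator $du\,d\theta$ of $N$ should produce
\begin{align*}
m^* \leq h(0) + \alpha|\mu| + \alpha \int_{-\infty}^t |\phi(t-u)|\, m^*\, du = h(0) + \alpha|\mu| + \alpha\|\phi\|_1\, m^*,
\end{align*}
which rearranges to $m^* \leq (h(0) + \alpha|\mu|)/(1 - \alpha\|\phi\|_1) < \infty$ by Assumption \ref{assump: noyau}.

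The main obstacle is that the previous display is circular: it makes sense only once $m^*$ is known to be finite. To remove the circularity I would fall back on the Picard iterates used in the proof of Theorem \ref{th:BM}, namely $\lambda^{(0)} \equiv h(\mu)$ and $\lambda^{(n+1)}_t := h(\mu + \iint_{(-\infty,t)\times\RR_+} \phi(t-u)\ind{\theta \leq \lambda^{(n)}_u} N(du,d\theta))$. Each $m^{(n)} := \E{\lambda^{(n)}_t}$ is finite and stationary in $t$, satisfies the recursion $m^{(n+1)} \leq h(0) + \alpha|\mu| + \alpha\|\phi\|_1\, m^{(n)}$, and hence is bounded uniformly in $n$ by $(h(0)+\alpha|\mu|)/(1-\alpha\|\phi\|_1)$; Fatou's lemma applied along the convergence $\lambda^{(n)} \to \lambda^\infty$ then transfers this bound to $m^*$ and closes the argument.
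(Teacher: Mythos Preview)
Your proposal is correct. The paper does not supply its own proof of this corollary: it is stated immediately after Theorem \ref{th:BM}, which is quoted from Br\'emaud--Massouli\'e \cite{bremaud_stability_1996}, and the finite average intensity is part of what that reference establishes. Your two-step argument (Poisson imbedding for the intensity identification, Picard iterates plus Fatou for the moment bound) is precisely the route taken in \cite{bremaud_stability_1996}, so you have faithfully reconstructed the omitted details.

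One small point to tighten: when you invoke Fatou along $\lambda^{(n)}\to\lambda^\infty$, recall that the convergence produced in \cite{bremaud_stability_1996} is in $L^1$, so you should pass to an a.s.\ convergent subsequence before applying Fatou; the uniform bound $\sup_n m^{(n)}\leq (h(0)+\alpha|\mu|)/(1-\alpha\|\phi\|_1)$ then transfers as you claim.
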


\paragraph{Hawkes processes with empty history}~\\
One specific case is the Hawkes process with an empty history. This entails a Hawkes process $(H,\lambda)$ where $H(\RR^*_{-}) = \emptyset$. The representation of such processes as solutions to SDEs has been explored by Hillairet et al. in \cite{hillairet_expansion_2023}, albeit for linear Hawkes processes. Here, we extend their findings to encompass non-linear Hawkes processes.

\begin{theorem}
    \label{th:HRR}
    Under Assumption \ref{assump: noyau}, the SDE below admits a unique $\FF^N$-predictable solution $\lambda$. 
    \begin{equation}
    \label{eq:Int_Hawkes}
    \lambda_t = h \left(\mu + \iint_{(0,t)\times \RR_+} \phi(t-u)\ind{\theta \leq \lambda_u} dN(u,\theta) \right) ,\quad t \in \RR_+ .
    \end{equation}
\end{theorem}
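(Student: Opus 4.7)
The plan is to prove existence and uniqueness via Banach's fixed-point theorem, mimicking the Br\'emaud--Massoulié contraction argument used for Theorem \ref{th:BM} but in the simpler empty-history setting (where there is no stationarity requirement and the initial condition is deterministic). For each $T>0$, I would introduce the complete Banach space $\mathcal{E}_T$ of (equivalence classes of) $\FF^N$-predictable non-negative processes $\lambda:[0,T]\times\Omega\to\RR_+$ endowed with the norm $\|\lambda\|_T := \mathbb{E}\bigl[\int_0^T \lambda_t\,dt\bigr]$, and define the map
$$\Phi(\lambda)_t := h\Bigl(\mu + \iint_{(0,t)\times\RR_+}\phi(t-u)\1_{\theta\leq \lambda_u}N(du,d\theta)\Bigr), \quad t\in[0,T].$$
Stability of $\mathcal{E}_T$ under $\Phi$ follows from the sublinear bound $h(x)\leq h(0)+\alpha|x|$ combined with the Campbell formula applied to the $\FF^N$-predictable integrand $\1_{\theta\leq \lambda_u}$, which gives $\|\Phi(\lambda)\|_T \leq T(h(0)+\alpha|\mu|)+\alpha\|\phi\|_1\|\lambda\|_T$.

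The core step is the contraction estimate. Starting from the $\alpha$-Lipschitz property of $h$, I would use the pathwise bound
$$|\Phi(\lambda)_t-\Phi(\lambda')_t| \leq \alpha \iint_{(0,t)\times\RR_+}|\phi(t-u)|\,\bigl|\1_{\theta\leq \lambda_u}-\1_{\theta\leq \lambda'_u}\bigr|\,N(du,d\theta),$$
take expectation (replacing $N$ by its compensator $du\,d\theta$ by predictability), and apply the elementary identity $\int_{\RR_+}|\1_{\theta\leq a}-\1_{\theta\leq b}|d\theta=|a-b|$ for $a,b\geq 0$ to arrive at
$$\mathbb{E}\bigl[|\Phi(\lambda)_t-\Phi(\lambda')_t|\bigr] \leq \alpha \int_0^t |\phi(t-u)|\, \mathbb{E}\bigl[|\lambda_u-\lambda'_u|\bigr]du.$$
Integrating in $t\in[0,T]$ and applying Fubini yields $\|\Phi(\lambda)-\Phi(\lambda')\|_T \leq \alpha\|\phi\|_1\|\lambda-\lambda'\|_T$, which is a strict contraction since $\alpha\|\phi\|_1<1$ by Assumption \ref{assump: noyau}. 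Banach's theorem then provides a unique fixed point $\lambda^T\in\mathcal{E}_T$ on $[0,T]$, and consistency across nested horizons (the restriction of $\lambda^{T'}$ to $[0,T]$ solves the same equation on $[0,T]$) pastes these together into a unique $\FF^N$-predictable solution $\lambda$ on $\RR_+$.

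The main obstacle I anticipate is the measure-theoretic bookkeeping required to make this scheme rigorous: the stochastic integral defining $\Phi$ only makes sense when the integrand is a genuine $\FF^N$-predictable process, not merely an $L^1$-equivalence class, so one must select and track predictable representatives consistently. A clean way around this is to perform the Picard iteration explicitly with $\lambda^{(0)}\equiv 0$ and $\lambda^{(n+1)}:=\Phi(\lambda^{(n)})$, which preserves predictability at each step; the same contraction inequality then shows that $u_n(t) := \mathbb{E}\bigl[|\lambda^{(n+1)}_t-\lambda^{(n)}_t|\bigr]$ satisfies $\sup_{[0,T]}u_n \leq (\alpha\|\phi\|_1)^n \sup_{[0,T]}u_0$, yielding a geometrically Cauchy sequence whose almost sure pointwise limit is $\FF^N$-predictable and satisfies \eqref{eq:Int_Hawkes}, while uniqueness follows from applying the same contraction bound to any two candidate solutions.
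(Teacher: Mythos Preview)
Your contraction argument is correct and essentially reproduces the Br\'emaud--Massouli\'e method in the empty-history setting; the key inequality $\|\Phi(\lambda)-\Phi(\lambda')\|_T\le \alpha\|\phi\|_1\|\lambda-\lambda'\|_T$ is derived correctly, and the Picard variant you outline does handle the predictable-representative issue (one minor point: the convergence you obtain is a priori in $L^1$, and passing to the limit inside the Poisson integral requires one more application of the same $L^1$ bound, but this is routine).

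The paper, however, takes a genuinely different route. Rather than a functional-analytic fixed point, it constructs the solution \emph{pathwise} via Ogata's thinning algorithm (Proposition~\ref{prop: existence}): starting from $\Lambda^{(1)}\equiv h(\mu)$, one records the first jump time $\tau_1^H$, updates the intensity using only the jumps up to $\tau_n^H$, and iterates. The main analytic work is then to show non-explosion, i.e.\ $\tau_n^H\to+\infty$, which follows from the bound $\E{\Lambda_t}\le h(\mu)+\int_0^t\psi^{(\alpha)}(t-s)h(\mu)\,ds$ obtained by comparison with the deterministic Volterra equation. Uniqueness is not handled by this construction and is deferred to \cite{hillairet_expansion_2023}. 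What the paper's approach buys is a concrete, everywhere-defined predictable process with no equivalence-class ambiguities, and it generalises directly to the setting of Proposition~\ref{prop: existence} where the baseline $(\mu_t)$ is itself random and $\Gcal_0$-conditioning is needed. What your approach buys is that existence and uniqueness come in a single stroke, and the argument is shorter once one accepts the measure-theoretic housekeeping.
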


\begin{remark}
    We refer to Appendix \ref{app : volterra} for the existence. More precisely, we refer to Proposition \ref{prop: existence}.
\end{remark}

\begin{corollary}
    Suppose that Assumption \ref{assump: noyau} holds. We consider $(H,\lambda)$ the unique solution to the SDE 
    \begin{equation}
    \left\lbrace
    \begin{array}{l}
        H_t = \iint_{(0,t] \times \RR_+} \ind{\theta \leq \lambda_s} N(ds,d\theta),\quad t \in \RR_+ \\
        \lambda_t = h \left(\mu + \int_{(0,t)} \phi(t-u) d H_u \right),\quad t \in \RR_+
    \end{array}
    \right. .
    \end{equation}
    We set $\FF^H$ the natural filtration of $H$. Then $H$ is a Hawkes process in the sense of Definition \ref{def:Hawkes}.
\end{corollary}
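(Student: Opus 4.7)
The plan is to verify the two conditions of Definition~\ref{def:Hawkes} for the counting process $H$ obtained from the SDE. Condition (i), $H_0=0$ almost surely, is immediate because the domain of integration $(0,0]\times\RR_+$ in the first SDE line is empty. The substance of the statement is therefore condition (ii), namely that $\lambda$ is the $\FF^H$-predictable intensity of $H$, i.e.
\[
\E{\1_A (H_t - H_s)} = \E{\int_{(s,t]} \1_A \lambda_r \, dr} \qquad \text{for every } 0 \le s \le t, \ A \in \Fcal^H_s.
\]

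First, I would establish the intensity property at the level of the larger filtration $\FF^N$. By Theorem~\ref{th:HRR}, the solution $\lambda$ is $\FF^N$-predictable and non-negative, but to invoke the standard Poisson compensation formula one also needs the finite-time integrability bound $\E{\int_0^T \lambda_r\, dr}<+\infty$. Such a bound can be extracted from Assumption~\ref{assump: noyau}: taking expectation in the SDE and using that $h$ is non-negative and $\alpha$-Lipschitz (so $h(x)\le h(0)+\alpha|x|$) yields the convolution inequality
\[
\E{\lambda_t} \leq h(0) + \alpha |\mu| + \alpha \int_0^t |\phi(t-u)|\, \E{\lambda_u}\, du,
\]
and the hypothesis $\alpha \|\phi\|_1 < 1$ then gives boundedness of $t\mapsto \E{\lambda_t}$ on compacts by a renewal/Gronwall-type argument. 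With the integrability in hand, the Poisson imbedding statement recalled at the end of Section~\ref{sec: Notation} directly yields that $H$ admits $\lambda$ as its $\FF^N$-intensity, i.e. $M_t:=H_t-\int_0^t\lambda_r\,dr$ is an $\FF^N$-martingale.

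Second, I would transfer the intensity property from $\FF^N$ down to $\FF^H$. The SDE identity $\lambda_t = h(\mu + \int_{(0,t)} \phi(t-u)\, dH_u)$ exhibits $\lambda_t$ as a measurable functional of $(H_u)_{u<t}$, so $\lambda$ is $\FF^H$-predictable and $M$ is $\FF^H$-adapted. Since $\Fcal^H_s\subseteq\Fcal^N_s$ for every $s \ge 0$, the $\FF^N$-martingale property of $M$ gives $\E{\1_A M_t} = \E{\1_A M_s}$ for all $A\in\Fcal^H_s$, which after rearrangement is exactly the required identity, completing the verification of (ii). The main obstacle I anticipate is not the filtration transfer, which is essentially automatic, but rather establishing the a priori bound $\E{\lambda_t}<+\infty$: this is precisely the step in which the stability condition $\alpha\|\phi\|_1<1$ of Assumption~\ref{assump: noyau} is crucially used.
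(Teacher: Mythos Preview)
Your proposal is correct and follows the standard route that the paper defers to in \cite{hillairet_expansion_2023}: verify $H_0=0$, use the Poisson imbedding to obtain the $\FF^N$-intensity property (with the a priori bound on $\E{\lambda_t}$ coming from the Lipschitz condition and $\alpha\|\phi\|_1<1$, as in Proposition~\ref{prop: existence}), and then project down to $\FF^H$ using that $\lambda_t$ is a measurable functional of $(H_u)_{u<t}$. The paper itself does not spell out these steps, so there is nothing further to compare.
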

\begin{proof}
    The proof is similar to the one done in \cite{hillairet_expansion_2023}.
\end{proof}

\paragraph{Comparaison of Hawkes processes}~\\
In this paragraph, we give a comparaison between two different Hawkes processes, denoted $H^{(1)}$ and $H^{(2)}$, defined with the same Poisson random measure. In particular, we will denote by $\lambda^{(1)}$ and $\lambda^{(2)}$ their respective intensities, i.e. for $t\in \RR_+$,
\begin{align*}
    \lambda^{(1)}_t &= h \left( \mu^{(1)}_t + \iint_{(0,t)\times \RR_+} \phi\left( t-s\right) \ind{\theta \leq \lambda^{(1)}_s} N(ds,d\theta)\right);\\
    \lambda^{(2)}_t &= h \left( \mu^{(2)}_t + \iint_{(0,t)\times \RR_+} \phi\left( t-s\right) \ind{\theta \leq \lambda^{(2)}_s} N(ds,d\theta)\right),
\end{align*}
where $\mu^{(1)}$ and $\mu^{(2)}$ are both $\Fcal_0^N$-measurable functions.
\begin{notation}
\label{notation: xi generaux}
    We denote by $(\xi^{(1)}_t)_{t\in \RR_+}$ and by $(\xi^{(2)}_t)_{t\in \RR_+}$ the following
    \begin{align*}
        \xi^{(1)}_t &:= \iint_{(0,t)\times \RR_+} \phi(t-s) \1_{\theta \leq \lambda^{(1)}_s} N(ds,d\theta), \quad t\in \RR_+; \\
        \xi^{(2)}_t &:= \iint_{(0,t)\times \RR_+} \phi(t-s) \1_{\theta \leq \lambda^{(2)}_s} N(ds,d\theta), \quad t\in \RR_+.
    \end{align*}
\end{notation}
With these notations on hand, we can find a relation between $\lambda_t^{(1)}$ and $\lambda_t^{(2)}$. This is detailed in Proposition \ref{prop: comparaison hawkes}.

\begin{proposition}
\label{prop: comparaison hawkes}
    Suppose that $\phi$ is a non-negative function and that for all $t\in \RR_+$, $\mu^{(1)}_t \geq \mu^{(2)}_t$. Then, for $t\in \RR_+$,
    \begin{enumerate}
        \item if $h$ is non-decreasing, $\lambda_t^{(1)} \geq \lambda_t^{(2)}$;
        \item if $h$ is non-increasing, $\lambda_t^{(1)} \leq \lambda_t^{(2)}$.
    \end{enumerate}
\end{proposition}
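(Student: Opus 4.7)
The plan is to construct both intensities simultaneously by the Picard iteration of Brémaud--Massoulié~\cite{bremaud_stability_1996} driven by the same Poisson measure $N$, and to propagate the desired comparison from the iterates to the limit. Concretely, for $i=1,2$, set $\lambda^{(i),0}\equiv 0$ and define for $n\ge 0$,
$$\lambda^{(i),n+1}_t := h\!\left(\mu^{(i)}_t + \iint_{(0,t)\times\RR_+}\phi(t-s)\,\ind{\theta\le \lambda^{(i),n}_s}\,N(ds,d\theta)\right),\qquad t\ge 0.$$
Under Assumption~\ref{assump: noyau}, $h$ is $\alpha$-Lipschitz with $\alpha\|\phi\|_1<1$, so the iteration is a contraction in a suitable norm on predictable processes, and $\lambda^{(i),n}\to \lambda^{(i)}$ as $n\to\infty$ (pointwise along a subsequence, say). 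Since both sequences are driven by the same measure $N$, it is enough to propagate the pointwise comparison through the induction on $n$.

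For case 1, I would prove by induction on $n$ that $\lambda^{(1),n}_t\ge \lambda^{(2),n}_t$ for all $t\ge 0$. The base case $n=0$ is immediate. For the inductive step, the hypothesis $\lambda^{(1),n}_s\ge \lambda^{(2),n}_s$ combined with $\phi\ge 0$ gives, pointwise on the support of $N$, $\ind{\theta\le \lambda^{(1),n}_s}\ge \ind{\theta\le \lambda^{(2),n}_s}$. With $\xi^{(i),n}$ defined as in Notation~\ref{notation: xi generaux} but for the $n$-th iterate, this yields $\xi^{(1),n}_t \ge \xi^{(2),n}_t$. Adding $\mu^{(1)}_t \ge \mu^{(2)}_t$ and using that $h$ is non-decreasing produces $\lambda^{(1),n+1}_t\ge \lambda^{(2),n+1}_t$. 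Passing to the limit closes case 1.

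Case 2 proceeds along the structurally identical induction, tracking the reversed inequality $\lambda^{(1),n}_t\le \lambda^{(2),n}_t$ together with the fact that $h$ is non-increasing. The main obstacle is that under this reversed hypothesis the feedback terms satisfy $\xi^{(1),n}_t\le \xi^{(2),n}_t$, so the two summands $\mu^{(i)}_t$ and $\xi^{(i),n}_t$ push the argument of $h$ in opposite directions, and the naive monotonicity argument used in case 1 does not immediately propagate the comparison to the next iterate. I expect this to be the technical heart of the proof: resolving it requires a finer pathwise argument exploiting the common driving measure $N$, for instance by ordering the points of $N$ in $[0,t]\times\RR_+$ (restricted to a bounded strip, justified by local boundedness of $\phi$ and integrability of the intensity) and comparing $\lambda^{(1)}$ and $\lambda^{(2)}$ at successive jump times, using that between two consecutive such points each intensity evolves as a deterministic functional of its past.
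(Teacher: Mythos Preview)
For case~1 your Picard--iteration argument is correct in spirit and is a reasonable alternative to the paper's approach. The paper instead argues directly on the solutions $\lambda^{(1)},\lambda^{(2)}$ by induction along the combined jump times $(\tau_n)$ (the ordered atoms of $N$ under the graph of $\lambda^{(1)}\vee\lambda^{(2)}$): on $(0,\tau_{k+1}]$ the feedback $\xi^{(j)}_t$ only involves the restriction of $\lambda^{(j)}$ to $(0,\tau_k]$, where the inductive ordering is already known, so one reads off $\mu^{(1)}_t+\xi^{(1)}_t\ge\mu^{(2)}_t+\xi^{(2)}_t$ and applies $h$. This pathwise jump--time induction avoids the limit passage that your version needs; in your scheme the Br\'emaud--Massouli\'e contraction yields convergence in a weighted norm, hence pointwise a.s.\ only along a subsequence, and you should say explicitly why the order survives the limit and why the subsequential limit is the unique solution.

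For case~2 your instinct that the induction breaks is correct, but the reason is not a technical subtlety to be overcome: the claim is false as stated. Take $h$ strictly decreasing, $\tfrac12$--Lipschitz, with $h(0)=1$ and $h(5)$ small; take $\phi=5\cdot\1_{[0,0.1]}$ (so $\alpha\|\phi\|_1=\tfrac14<1$) and constants $\mu^{(1)}\equiv\varepsilon>0$, $\mu^{(2)}\equiv 0$. Before any jump, $\lambda^{(1)}_t=h(\varepsilon)<1=h(0)=\lambda^{(2)}_t$. On the positive--probability event that the first atom $(\tau_1,\theta_1)$ of $N$ under the graph of $\lambda^{(2)}$ satisfies $h(\varepsilon)<\theta_1\le 1$, only $H^{(2)}$ jumps; immediately after, $\lambda^{(2)}_t=h(5)$ is small while $\lambda^{(1)}_t=h(\varepsilon)\approx 1$, so $\lambda^{(1)}_t>\lambda^{(2)}_t$, contradicting the conclusion. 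The paper's own proof has the same gap: the displayed inequality $\mu^{(1)}_t+\xi^{(1)}_t\ge\mu^{(2)}_t+\xi^{(2)}_t$ does not follow from the ``resp.'' hypothesis $\lambda^{(1)}\le\lambda^{(2)}$ on $(0,\tau_k]$, since that gives $\xi^{(1)}_t\le\xi^{(2)}_t$ while $\mu^{(1)}_t\ge\mu^{(2)}_t$, and the two inequalities pull in opposite directions. So do not look for a ``finer pathwise argument'' here; there is none. (This does not affect the paper's applications, which only invoke case~1.)
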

\begin{proof}
    Suppose that $h$ is non-decreasing (resp. non-increasing) and let us prove by induction the result.\\
    For $n\in \NN^*$, we denote by $\left(\tau_n\right)_{n\in \NN}$ the following sequence:  
    $$\left\{ \begin{array}{ll}
         \tau_1 &:= \min\{ \tau_1^{(1)}, \tau_1^{(2)}\};\\
         \tau_n &:= \inf \left\{ \tau >0 \mid (\tau, \theta) \in (\tau_{n-1}, +\infty) \times [0, \lambda_\tau^{(1)}\vee \lambda_\tau^{(2)}], N\{ (t,\theta)\} =1\right\}, n\geq 2.
    \end{array} \right.$$
    Here, $\left( \tau_i^{(1)}\right)_{i\in \NN^*}$ (resp. $\left( \tau_i^{(2)}\right)_{i\in \NN^*}$) is the sequence of positive time-jump of $H^{(1)}$ (resp. $H^{(2)}$).\\
    For $t\in (0, \tau_1]$, we have that $\xi^{(j)}= 0$ for $j\in \{1,2\}$. Hence, we get
    $$ \mu_t^{(1)} + \xi_t^{(1)} = \mu_t^{(1)} \geq \mu_t^{(2)} =  \mu_t^{(2)}+\xi^{(2)}_t .$$
    Since $h$ is non-decreasing (resp. non-increasing) we obtain $\lambda^{(1)} \geq \lambda^{(2)}$ (resp. $\lambda^{(1)} \leq \lambda^{(2)}$) on $(0,\tau_{1}]$.\\
    Now, let $k\in \NN^*$ and suppose that $\lambda^{(1)} \geq \lambda^{(2)}$ (resp. $\lambda^{(1)} \leq \lambda^{(2)}$) on $(0,\tau_k]$.
    For $t\in (0, \tau_{k+1}]$, we have 
    $$ \mu_t^{(j)}+\xi_t^{(j)} = \mu^{(j)}_t + \iint_{(0,\tau_{k}]\times \RR_+} \phi(t-s) \1_{\theta \leq \lambda^{(j)}_s} N(ds,d\theta), \quad j\in \{ 1,2\}.$$
    Since $\lambda^{(1)} \geq \lambda^{(2)}$ (resp. $\lambda^{(1)} \leq \lambda^{(2)}$) on $(0,\tau_k]$, we have 
    \begin{align*}
        \mu_t^{(1)}+\xi_t^{(1)} &=\mu^{(1)}_t + \iint_{(0,\tau_k]\times \RR_+} \phi(t-s) \1_{\theta \leq \lambda^{(1)}_s} N(ds,d\theta) \\
        &\geq \mu^{(2)}_t + \iint_{(0,\tau_k]\times \RR_+} \phi(t-s) \1_{\theta \leq \lambda^{(2)}_s} N(ds,d\theta) = \mu_t^{(2)}+\xi^{(2)}_t.
    \end{align*}
    Since $h$ is non-decreasing (resp. non-increasing) we obtain $\lambda^{(1)} \geq \lambda^{(2)}$ (resp. $\lambda^{(1)} \leq \lambda^{(2)}$) on $(0,\tau_{k+1}]$.
\end{proof}

Note that Proposition \ref{prop: comparaison hawkes} allows us to compare the stationary version of a Hawkes process $H^{\infty}$ with a Hawkes process with empty history $H$. To do so, we introduce the following notation:
\begin{notation}
\label{notation: xi}
    We denote by $(\xi_t)_{t\in\RR_+}$ and by $(\xi^{\infty}_t)_{t\in \RR}$ the following
    \begin{align*}
        \xi_t &:= \iint_{(0,t)\times \RR_+} \phi(t-s) \1_{\theta \leq \lambda_s} N(ds,d\theta), \quad t\in \RR_+; \\
        \xi^{\infty}_t &:= \iint_{(-\infty,t)\times \RR_+} \phi(t-s) \1_{\theta \leq \lambda^{\infty}_s} N(ds,d\theta), \quad t\in \RR.
    \end{align*}
    Here, $\lambda$ and $\lambda^{\infty}$ are respectively the intensities of Hawkes processes as defined in \eqref{eq:Int_Hawkes} and \eqref{eq:Int_Hawkes_station}.
\end{notation}

\begin{corollary}
\label{prop: comparaison hawkes stat}
    Suppose that $\phi$ is a non-negative function. Then, for $t\in \RR_+$,
    \begin{enumerate}
        \item if $h$ is non-decreasing, $\lambda_t^{\infty} \geq \lambda_t$;
        \item if $h$ is non-increasing, $\lambda_t^{\infty} \leq \lambda_t$.
    \end{enumerate}
\end{corollary}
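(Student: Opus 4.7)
The strategy is to view $\lambda^\infty$ as a Hawkes process with empty past history but driven by a random, $\Fcal_0^N$-measurable baseline which absorbs the contribution of the pre-$0$ jumps, and then to invoke Proposition \ref{prop: comparaison hawkes} directly. Concretely, for $t\in \RR_+$, set
$$\mu^{(1)}_t := \mu + \iint_{(-\infty,0]\times \RR_+} \phi(t-u)\ind{\theta \leq \lambda_u^{\infty}} N(du,d\theta), \qquad \mu^{(2)}_t := \mu.$$
Splitting the integral in \eqref{eq:Int_Hawkes_station} between $(-\infty,0]$ and $(0,t)$ shows that, for $t>0$,
$$\lambda^{\infty}_t = h\!\left(\mu^{(1)}_t + \iint_{(0,t)\times \RR_+} \phi(t-u)\ind{\theta \leq \lambda_u^{\infty}} N(du,d\theta)\right),$$
so $\lambda^\infty$ satisfies exactly the equation defining $\lambda^{(1)}$ in the setting of Proposition \ref{prop: comparaison hawkes} with baseline $\mu^{(1)}$, while $\lambda$ of \eqref{eq:Int_Hawkes} satisfies the equation defining $\lambda^{(2)}$ with baseline $\mu^{(2)}\equiv \mu$.

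Two verifications remain. First, for each $t\in \RR_+$, $\mu^{(1)}_t$ is $\Fcal_0^N$-measurable, since the integrand only involves $N$ on $(-\infty,0]\times \RR_+$ and the process $(\lambda^{\infty}_u)_{u\leq 0}$, itself $\Fcal_0^N$-measurable by Theorem \ref{th:BM}; almost sure finiteness comes from $\phi\in L^1$ and the finite stationary mean of $\lambda^\infty$. Second, since $\phi\geq 0$, the additional term in $\mu^{(1)}_t$ is non-negative, so $\mu^{(1)}_t \geq \mu = \mu^{(2)}_t$ for every $t\in \RR_+$. Applying Proposition \ref{prop: comparaison hawkes} then yields $\lambda^{\infty}_t \geq \lambda_t$ when $h$ is non-decreasing and $\lambda^{\infty}_t \leq \lambda_t$ when $h$ is non-increasing, which is the claim.

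The only delicate point is that the baseline $\mu^{(1)}_t$ built here depends on $t$ through $\phi(t-\cdot)$, whereas in Proposition \ref{prop: comparaison hawkes} one might read $\mu^{(j)}$ as a single $\Fcal_0^N$-measurable random variable. Inspection of the inductive proof of Proposition \ref{prop: comparaison hawkes} shows, however, that it only uses the pointwise comparison $\mu^{(1)}_t \geq \mu^{(2)}_t$ at each $t$ together with the monotonicity of $h$; the argument therefore extends verbatim to $t$-dependent, $\Fcal_0^N$-measurable baselines, and no further work is needed.
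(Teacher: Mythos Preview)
Your proof is correct and follows exactly the approach of the paper, which also applies Proposition \ref{prop: comparaison hawkes} with $\mu^{(1)}_t=\mu+\iint_{(-\infty,0]\times\RR_+}\phi(t-s)\1_{\theta\leq\lambda_s^\infty}N(ds,d\theta)$ and $\mu^{(2)}=\mu$. Your final caveat about $t$-dependent baselines is unnecessary, since Proposition \ref{prop: comparaison hawkes} is already stated for $\Fcal_0^N$-measurable functions $\mu^{(j)}_t$ indexed by $t$.
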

\begin{proof}
    Apply Proposition \ref{prop: comparaison hawkes} with $\lambda^{(1)} = \lambda^{\infty}$, $\lambda^{(2)} = \lambda$, $\mu^{(1)}_t=\mu+ \iint_{(-\infty,0]\times \RR_+} \phi(t-s) \1_{\theta \leq \lambda^{\infty}_s}N(ds, d\theta)$ and $\mu^{(2)}=\mu$.
\end{proof}

\subsection{Elements of Malliavin calculus  for non-linear Hawkes processes in continuous time}
\label{subsec: Malliavin}

Following the same lines as \cite{hillairet_malliavin-stein_2022}, we develop here the effect of Malliavin calculus for general Hawkes processes. In this part, we decide to use a non-linear Hawkes process with empty history. Note that the results can also be use for a stationary Hawkes process, that is why we use the same notation as Definition \ref{def:Hawkes}.
Note that these results ensure that hypotheses of Theorem \ref{thm: main} are indeed verified, and thus it can be applied.

\begin{lemma}
    Let $t \geq \Upsilon$ and $(\theta, \theta_0) \in \RR^2_+$, it holds that :
    \begin{align*}
    &\ind{\theta \leq \lambda^{(\Upsilon)}_t}\ind{\theta_0 \leq \lambda^{(\Upsilon)}_t} \left( H^{(\Upsilon)}_s \circ \eps_{(t,\theta)}^+ , \lambda^{(\Upsilon)}_s\circ \eps_{(t,\theta)}^+ \right)_{s\geq \Upsilon} \\
    &=\ind{\theta \leq \lambda^{(\Upsilon)}_t}\ind{\theta_0 \leq \lambda^{(\Upsilon)}_t} \left( H^{(\Upsilon)}_s \circ \eps_{(t,\theta_0)}^+ , \lambda^{(\Upsilon)}_s\circ \eps_{(t,\theta_0)}^+ \right)_{s\geq \Upsilon} .
    \end{align*}
\end{lemma}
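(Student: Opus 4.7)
The intuition is that the Poisson imbedding converts a point $(u,\theta')$ of $N$ into a jump of $H^{(\Upsilon)}$ only through the thinning indicator $\ind{\theta'\leq\lambda^{(\Upsilon)}_u}$, which forgets the precise height $\theta'$. Under $\ind{\theta\leq\lambda^{(\Upsilon)}_t}\ind{\theta_0\leq\lambda^{(\Upsilon)}_t}$, both shifts $\eps^+_{(t,\theta)}$ and $\eps^+_{(t,\theta_0)}$ insert a single point at time $t$ that survives the thinning test at $t$, so the two resulting driving configurations should give rise to the same sequence of accepted events, hence to the same $H^{(\Upsilon)}$ and $\lambda^{(\Upsilon)}$.

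The plan is to treat the three time regimes $s<t$, $s=t$ and $s>t$ in turn. For $s<t$, the random variables $H^{(\Upsilon)}_s$ and $\lambda^{(\Upsilon)}_s$ are $\Fcal^N_{t^-}$-measurable (the intensity being $\FF^N$-predictable), so Lemma~\ref{lemma:mesur} yields their invariance under both $\eps^+_{(t,\theta)}$ and $\eps^+_{(t,\theta_0)}$. At $s=t$, the same predictability gives $\lambda^{(\Upsilon)}_t\circ\eps^+_{(t,\theta)}=\lambda^{(\Upsilon)}_t=\lambda^{(\Upsilon)}_t\circ\eps^+_{(t,\theta_0)}$; on the event carried by the two indicators the inserted point is therefore accepted in both shifted systems, and I obtain $H^{(\Upsilon)}_t\circ\eps^+_{(t,\theta)}=H^{(\Upsilon)}_t+1=H^{(\Upsilon)}_t\circ\eps^+_{(t,\theta_0)}$.

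To propagate the identity to $s>t$, I would argue by induction on the successive jump times of $N$ strictly after $t$. Once the two shifted counting processes are known to agree up to some jump time $\tau$, the predictable intensity on $(\tau,\cdot)$ is, via \eqref{eq:Int_Hawkes_station} or \eqref{eq:Int_Hawkes}, a deterministic functional of the counting process on $(\Upsilon,\cdot)$; by the induction hypothesis this functional takes the same value under both shifts, hence the acceptance test at the next jump is identical and the two counting processes receive the same increment.

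The main obstacle will be that when $\phi$ is not compactly supported and $h$ is unbounded, there is no guarantee that the jumps of $N$ in $(t,+\infty)$ below the intensity curve can be exhausted by a well-ordered sequence, so the pointwise induction must be closed through a global argument. I would conclude by invoking the Picard-type fixed-point constructions behind Theorems~\ref{th:BM} and~\ref{th:HRR} (see Proposition~\ref{prop: existence}): the Hawkes dynamics driven by a prescribed Poisson configuration admits a unique predictable solution, so two solutions driven by configurations differing only by the height of a single point accepted at time $t$ must coincide on every bounded interval, giving the claimed pathwise equality.
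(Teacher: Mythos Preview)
Your proposal is correct and lands on the same argument as the paper: after observing invariance for $s<t$ by $\Fcal^N_{t^-}$-measurability and checking the situation at $s=t$, both you and the paper conclude by noting that the shifted pair $(H^{(\Upsilon)}\circ\eps^+_{(t,\theta_0)},\lambda^{(\Upsilon)}\circ\eps^+_{(t,\theta_0)})$ satisfies on $(t,\infty)$ the same Hawkes SDE regardless of $\theta_0$ (on $\{\theta_0\leq\lambda^{(\Upsilon)}_t\}$), and then invoke the uniqueness behind Theorems~\ref{th:BM}--\ref{th:HRR} / Proposition~\ref{prop: existence}. Your intermediate attempt at an induction ``on the successive jump times of $N$'' is ill-posed as stated (the time-projections of atoms of $N$ are dense, since the Poisson measure has infinite mass in the $\theta$-direction), and your stated obstacle is in fact moot: the relevant induction would be on the jump times of the shifted Hawkes process, which are locally finite a.s.\ under Assumption~\ref{assump: noyau}---but this is precisely what the uniqueness theorem packages for you, so you can drop the detour entirely.
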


\begin{proof}
    During the proof, we work on $\{ \theta_0 \leq \lambda^{(\Upsilon)}_t\}$ for a fixed $t \geq \Upsilon$.
    For any $s\geq t$, we have
    $$H^{(\Upsilon)}_s \circ \eps_{(t,\theta_0)}^+ = H^{(\Upsilon)}_{t^-} + \ind{\theta_0 \leq \lambda^{(\Upsilon)}_t} + \iint_{(t,s]\times \RR_+} \ind{\eta \leq \lambda^{(\Upsilon)}_u \circ \eps_{(t,\theta_0)}^+ } N(du,d\eta )$$
    and $H^{(\Upsilon)}_s \circ \eps_{(t,\theta_0)}^+ = H^{(\Upsilon)}_s$, for $s<t$. We refer to \cite{hillairet_malliavin-stein_2022} for a detailed proof. In the similar fashion, $\lambda^{(\Upsilon)}_s \circ \eps_{(t,\theta_0)}^+ = \lambda^{(\Upsilon)}_s$ for $s<t$ and for any $s\geq t$, we have
    \begin{align*}
        \lambda^{(\Upsilon)}_s \circ \eps_{(t,\theta_0)}^+ &= h \left( \mu + \int_{(0,t)} \phi (s-u) dH^{(\Upsilon)}_u + \int_{[t,s)} \phi (s-u) dH^{(\Upsilon)}_u \right)\circ \eps_{(t,\theta_0)}^+ \\
        &= h \left( \mu + \int_{(0,t)} \phi (t-u) dH^{(\Upsilon)}_u + \phi(s-t) +\int_{(t,s)} \phi (s-u) d\left(H^{(\Upsilon)}_u\circ \eps_{(t,\theta_0)}^+ \right)\right)
    \end{align*}
    Since $\left( H^{(\Upsilon)} \circ \eps_{(t,\theta_0)}^+ , \lambda^{(\Upsilon)}\circ \eps_{(t,\theta_0)}^+ \right)$ solves the same SDE for any $\theta_0 \leq \lambda^{(\Upsilon)}_t$, we get the result.
\end{proof}
This Lemma thereupon yields Proposition \ref{prop: non importance du saut}:
\begin{proposition}
    \label{prop: non importance du saut} 
    Let $F$ be an $\Fcal_\infty^H$-measurable random variable. Then for any $t\geq \Upsilon$ and for any $(\theta, \theta_0) \in \RR^2_+$,
    $$\ind{\theta \leq \lambda^{(\Upsilon)}_t} D_{(t,\theta)} \left( F \right) = \ind{\theta_0 \leq \lambda^{(\Upsilon)}_t} D_{(t,\theta_0)} \left( F \right)$$
\end{proposition}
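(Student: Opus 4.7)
The plan is to transport the pathwise identity established in the preceding lemma to a general $\Fcal_\infty^H$-measurable $F$ via a measurability argument. By the Doob-Dynkin lemma, any such $F$ can be written as $F = \Phi(H^{(\Upsilon)})$ for some measurable functional $\Phi$ defined on the space of $\NN$-valued configurations, so that $F \circ \eps_{(t,\eta)}^+ = \Phi(H^{(\Upsilon)} \circ \eps_{(t,\eta)}^+)$ for every $\eta \geq 0$, by commutation of the shift with measurable functionals of the trajectory. The preceding lemma then ensures that on the event $\{\theta \leq \lambda_t^{(\Upsilon)}\} \cap \{\theta_0 \leq \lambda_t^{(\Upsilon)}\}$, the two shifted trajectories $H^{(\Upsilon)} \circ \eps_{(t,\theta)}^+$ and $H^{(\Upsilon)} \circ \eps_{(t,\theta_0)}^+$ coincide as point measures; applying $\Phi$ and multiplying by both indicators yields
$$\ind{\theta \leq \lambda_t^{(\Upsilon)}}\ind{\theta_0 \leq \lambda_t^{(\Upsilon)}}\, F\circ \eps_{(t,\theta)}^+ = \ind{\theta \leq \lambda_t^{(\Upsilon)}}\ind{\theta_0 \leq \lambda_t^{(\Upsilon)}}\, F\circ \eps_{(t,\theta_0)}^+.$$
Subtracting $\ind{\theta \leq \lambda_t^{(\Upsilon)}}\ind{\theta_0 \leq \lambda_t^{(\Upsilon)}} F$ from both sides and invoking the definition $D_{(t,\eta)}(F) = F \circ \eps_{(t,\eta)}^+ - F$ then delivers the stated identity.

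The main subtlety lies in the functional representation $F = \Phi(H^{(\Upsilon)})$, which I would justify by a standard monotone class argument: verify the identity first for indicator random variables of cylinder events of the form $\{H^{(\Upsilon)}(B_1) = k_1, \ldots, H^{(\Upsilon)}(B_n) = k_n\}$ with $B_i \in \Bcal(\RR)$ and $k_i \in \NN$, for which the conclusion follows immediately from the lemma applied to each $H^{(\Upsilon)}(B_i)$, and then extend by linearity, monotone convergence, and Dynkin's $\pi$-$\lambda$ theorem to the whole $\sigma$-field $\Fcal_\infty^H$. Everything else reduces to routine manipulations of the definitions of the shift operator and the Malliavin derivative.
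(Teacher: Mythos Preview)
Your approach is exactly the paper's: the paper does not give a detailed proof but simply states that the preceding lemma yields the proposition (deferring to \cite{hillairet_malliavin-stein_2022} for details), and your Doob--Dynkin/monotone-class passage from the lemma on $(H^{(\Upsilon)},\lambda^{(\Upsilon)})$ to an arbitrary $\Fcal_\infty^H$-measurable $F$ is precisely the standard argument that is intended. One small remark: what your computation actually delivers is the identity multiplied by the \emph{product} of both indicators,
\[
\ind{\theta \leq \lambda_t^{(\Upsilon)}}\ind{\theta_0 \leq \lambda_t^{(\Upsilon)}}\, D_{(t,\theta)}(F) \;=\; \ind{\theta \leq \lambda_t^{(\Upsilon)}}\ind{\theta_0 \leq \lambda_t^{(\Upsilon)}}\, D_{(t,\theta_0)}(F),
\]
which matches the form of the preceding lemma and is exactly what is used downstream (specializing to $\theta_0=0$, where $\ind{0\leq \lambda_t^{(\Upsilon)}}=1$, gives the Notation that follows); the displayed identity in the proposition with a single indicator on each side is a slight imprecision in the paper rather than a gap in your argument.
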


\begin{notation}
    Let $t\geq \Upsilon$. For a $\Fcal_\infty^H$-measurable random variable $F$, we set
    $$D_t \left(F \right) := \ind{\theta = 0} D_{(t,\theta)}\left(F \right).$$
    In particular, for any $\theta \in \RR_+$, 
    $$\ind{\theta  \leq \lambda^{(\Upsilon)}_t} D_{(t,\theta)} \left( F\right) = \ind{\theta  \leq \lambda^{(\Upsilon)}_t} D_t \left( F \right).$$
\end{notation}

\begin{theorem}[Integration by parts]
\label{th:IPPH}
Set $\z:=(\z_{(t,\theta)})_{(t,\theta) \in (\Upsilon, +\infty)\times\RR_+}$ the stochastic process defined as 
$$ \z_{(t,\theta)}:= \mathds{1}_{\{\theta \leq \lambda^{(\Upsilon)}_t\}}, \quad (t,\theta) \in (\Upsilon, +\infty)\times\RR_+.$$
Let $Z:=(Z_t)_{t \geq \Upsilon}$ be a $\FF^H$-predictable process satisfying
$$\E{\int_\Upsilon^{+\infty} |Z_t|^2 \lambda^{(\Upsilon)}_t dt + \left(\int_\Upsilon^{+\infty} Z_t \lambda^{(\Upsilon)}_t dt \right)^2}<\infty.$$ It holds that 
\begin{itemize}
\item[(i)] $Z \z=(Z_t \mathds{1}_{\{\theta \leq \lambda^{(\Upsilon)}_t\}})_{(t,\theta)\in (\Upsilon, +\infty)\times\RR_+}$ belongs to $\Scal$.
\item[(ii)] For any $\Fcal^H_\infty$-measurable random variable $F$ with $\E{|F|^2}<+\infty$,
\begin{equation}
\label{eq:IBP}
\E{F \delta(Z \ind{\theta \leq \lambda^{(\Upsilon)}_t})} = \E{ \int_\Upsilon^{+\infty} \lambda^{(\Upsilon)}_t Z_t D_{t} F dt},
\end{equation}
where $D_{t} F = \mathds{1}_{\{0\leq \lambda^{(\Upsilon)}_t\}} D_{(t,0)} F $ .
\end{itemize}
\end{theorem}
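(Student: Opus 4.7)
The plan splits naturally into two parts mirroring the two statements of the theorem.

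For part (i), the goal is to verify each of the three defining conditions of $\Scal$. Predictability of the process $(Z_t \z_{(t,\theta)})$ with respect to the natural filtration $\FF^N$ is immediate: $Z$ is $\FF^H$-predictable and $\FF^H \subset \FF^N$, while $\lambda^{(\Upsilon)}$ is $\FF^N$-predictable by construction, so the indicator $\ind{\theta \leq \lambda^{(\Upsilon)}_t}$ is predictable in $(t,\theta)$. To show that $Z\z \in \Ical$, I would apply Lemma \ref{lemma:mesur}: since $Z_t$ and $\lambda^{(\Upsilon)}_t$ are both $\Fcal_{t^-}^N$-measurable (by predictability), adding a point at $(t,\theta)$ leaves them unchanged, hence $D_{(t,\theta)}(Z_t \ind{\theta \leq \lambda^{(\Upsilon)}_t}) = 0$ a.s. Finally, integrating out $\theta$ using $\int_{\RR_+} \ind{\theta \leq \lambda^{(\Upsilon)}_t} d\theta = \lambda^{(\Upsilon)}_t$ turns the two integrability requirements of Definition \ref{def: div op} into the two hypotheses placed on $Z$, so both finiteness conditions follow.

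For part (ii), I would first invoke the general Poisson integration by parts formula from Proposition \ref{prop:IPP}, which, together with part (i), yields
\begin{equation*}
\E{F\,\delta(Z\z)} = \E{\iint_{\RR \times \RR_+} Z_t \ind{\theta \leq \lambda^{(\Upsilon)}_t} D_{(t,\theta)} F \, dt\, d\theta}.
\end{equation*}
The key step is then to reduce $D_{(t,\theta)}F$ (which a priori depends on the vertical coordinate $\theta$) to the single quantity $D_t F = D_{(t,0)} F$. This is exactly where Proposition \ref{prop: non importance du saut} comes in: since $F$ is $\Fcal_\infty^H$-measurable and since $\lambda^{(\Upsilon)}_t > 0$ (because $h$ takes positive values), taking $\theta_0 = 0$ in that proposition gives $\ind{\theta \leq \lambda^{(\Upsilon)}_t} D_{(t,\theta)} F = \ind{\theta \leq \lambda^{(\Upsilon)}_t} D_t F$ on the event $\{\theta \leq \lambda^{(\Upsilon)}_t\}$. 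Substituting and integrating out $\theta$ produces the factor $\lambda^{(\Upsilon)}_t$ and yields the desired identity, with the range of integration collapsing to $(\Upsilon, +\infty)$ because $\z_{(t,\theta)}$ vanishes for $t < \Upsilon$.

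\textbf{Main obstacle.} None of the individual steps is technically deep; the real content is conceptual. The delicate point is justifying the use of Proposition \ref{prop: non importance du saut}, which requires $\Fcal_\infty^H$-measurability rather than merely $\Fcal_\infty^N$-measurability of $F$: it is precisely this restriction that allows one to identify a shift at height $\theta \in [0,\lambda^{(\Upsilon)}_t]$ with a shift at height $0$, since only the presence, not the location, of an accepted Poisson point matters for the Hawkes dynamics. Keeping this measurability hypothesis explicit (and verifying that all Fubini exchanges are legitimate under the integrability bounds established in part (i)) is the main care required.
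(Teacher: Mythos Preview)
Your proposal is correct and follows exactly the natural route: verify the three defining conditions of $\Scal$ for part (i), then apply the general Poisson integration by parts (Proposition \ref{prop:IPP}) and reduce the $\theta$-dependence via Proposition \ref{prop: non importance du saut} for part (ii). The paper itself does not give a proof of this theorem; it explicitly states that the argument is unchanged from the linear case treated in \cite{hillairet_malliavin-stein_2022} and refers the reader there, so your outline is precisely the expected one.
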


The proofs of Proposition \ref{prop: non importance du saut} and Theorem \ref{th:IPPH} are not given in this paper. Indeed, the non linearity of the Hawkes does not change the proofs done in \cite{hillairet_malliavin-stein_2022}. That is why we refer to this article for detailed proofs.


\subsection{Convergence rate for continuous time Hawkes processes}

In this part, we set $(H,\lambda)$ a non-linear Hawkes process as defined in Definition \ref{def:Hawkes}, i.e. with intensity
    $$\lambda_t = h \left( \mu + \iint_{(0,t)\times \RR_+} \phi(t-s) \1_{\theta \leq \lambda_s} N(ds,d\theta) \right), \quad t\geq 0.$$
According to the previous work, Assumption \ref{assump: initiale} holds. Thanks to this we can write
    $$F^T := \frac{H_{T}- \int_0^{T} \lambda_u du}{\sqrt{T}} = \delta \left( \z^T Z^T \right)$$
    with $\z^T_{(t,\theta)} = \1_{\theta \leq \lambda_t}$ and $Z^T_t = \frac{\1_{[0,T]}(t)}{\sqrt{T}}$.
    This corresponds to the same quantity defined in the previous section where we have set:
    $$f^T = \left( \lambda_t \right)_{t\in [0,T]} \quad \text{and} \quad g^T \equiv 1.$$

Throughout this section, we will suppose that $\phi$ and $h$ respect Assumption \ref{assump: noyau}. Note that \cite{zhu_nonlinear_2013} establishes the convergence of $F^T$ to a Gaussian distribution in law under more restrictive assumption. Indeed, in \cite{zhu_nonlinear_2013}, $h$ is suppose to be non-decreasing and $\phi$ to be positive. In this article, we investigate the convergence rate of this process without supposing monotonicity on $h$ or sign for $\phi$ but only with Assumption \ref{assump: noyau}. We write our result in Theorem \ref{thm: conv_rate_hawkes_cont}.
    
\begin{theorem}
\label{thm: conv_rate_hawkes_cont}
    Let  $G \sim \Ncal (0,\sigma^2)$ where $\sigma^2 = \E{\lambda_0^\infty}$ .Then, under Assumption \ref{assump: noyau}, there exists $C>0$ such that for any $T>0$, we have
    $$d_W(F^T, G) \leq \frac{C}{\sqrt{T}}.$$
\end{theorem}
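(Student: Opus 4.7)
The plan is to apply Theorem~\ref{thm : Maj_g_determinist} directly with $f^T := \lambda$ and $g^T \equiv 1$. The hypothesis $g^T$ bounded from below is trivial, and $\sup_{T>0} \mathbb{E}\bigl[\tfrac{1}{T}\int_0^T \lambda_t\, dt\bigr] < +\infty$ follows from Assumption~\ref{assump: noyau} together with standard moment estimates for stable non-linear Hawkes processes (the iteration $\mathbb{E}[\lambda_t] \leq h(\mu) + \alpha \int_0^t |\phi(t-s)| \mathbb{E}[\lambda_s]\, ds$ yields $\sup_t \mathbb{E}[\lambda_t] < \infty$ since $\alpha \|\phi\|_1 < 1$); the remaining measurability hypothesis on $\z^T Z^T$ is provided in Section~\ref{subsec: Malliavin}. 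Theorem~\ref{thm : Maj_g_determinist} then reduces the statement to showing
$$\mathbb{E}\left[\left|\sigma^2 - \frac{1}{T}\int_0^T \lambda_t\, dt\right|\right] \leq \frac{C}{\sqrt{T}}.$$

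I would introduce the stationary intensity $\lambda^\infty$ from Theorem~\ref{th:BM}, coupled to $\lambda$ through the common Poisson measure $N$, and split via the triangle inequality into
$$\mathbb{E}\left[\left|\sigma^2 - \frac{1}{T}\int_0^T \lambda_t^\infty\, dt\right|\right] + \frac{1}{T}\int_0^T \mathbb{E}\bigl[|\lambda_t^\infty - \lambda_t|\bigr]\, dt.$$
For the coupling correction, the $\alpha$-Lipschitz property of $h$ applied to the defining equations gives a Volterra-type inequality $u(t) \leq r(t) + \alpha \int_0^t |\phi(t-s)|\, u(s)\, ds$, with $u(t) := \mathbb{E}[|\lambda_t^\infty - \lambda_t|]$ and a source term $r(t) \leq C \int_t^\infty |\phi(v)|\, dv$ coming from the pre-zero contribution to $\lambda^\infty$. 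Under $\alpha\|\phi\|_1 < 1$ the resolvent of the kernel $\alpha|\phi|$ is integrable, and the assumption $\int_0^\infty v|\phi(v)|\, dv < \infty$ ensures $\int_0^\infty r(t)\, dt < \infty$, so $\int_0^\infty u(t)\, dt < \infty$ and this contribution is $O(1/T)$.

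For the stationary term, Jensen's inequality and stationarity of $\lambda^\infty$ give
$$\mathbb{E}\left[\left|\sigma^2 - \frac{1}{T}\int_0^T \lambda_t^\infty\, dt\right|\right]^2 \leq \frac{2}{T^2}\int_0^T (T-u)\, c(u)\, du, \qquad c(u) := \mathrm{Cov}(\lambda_0^\infty, \lambda_u^\infty),$$
so it suffices to establish $c \in L^1(\mathbb{R}_+)$, which then makes the variance $O(1/T)$ and the stationary contribution $O(1/\sqrt{T})$. To control $c$ in the non-linear setting I would rely on the Malliavin integration by parts formula of Theorem~\ref{th:IPPH}: by Clark-type arguments, $c(u)$ is expressible as $\int_{-\infty}^0 \mathbb{E}[D_s \lambda_0^\infty \cdot D_s \lambda_u^\infty]\,\lambda^\infty$-weighted, and using the chain rule for $h$ one obtains $|c(u)| \leq C \int_0^u |\phi(u-s)|\, |c(s)|\, ds + \rho(u)$ with a decaying remainder $\rho$; integrability of $c$ then follows from $\alpha\|\phi\|_1 < 1$ by the same resolvent argument.

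The main obstacle is precisely this last step: the integrable decay of the stationary covariance in the non-linear regime. In the linear case it is classical (via the Bartlett spectrum or the explicit Volterra resolvent), but for non-linear $h$ one must leverage the Malliavin calculus developed in Section~\ref{subsec: Malliavin} to bound derivatives of $\lambda^\infty$ back by an equation of the same contractive form. All the other pieces (verification of hypotheses of Theorem~\ref{thm : Maj_g_determinist}, triangle-inequality split, and the coupling bound via the Volterra inequality) are essentially routine consequences of Assumption~\ref{assump: noyau}.
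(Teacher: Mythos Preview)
Your proposal is correct and follows essentially the same route as the paper: apply Theorem~\ref{thm : Maj_g_determinist} with $f^T=\lambda$, $g^T\equiv 1$, split the residual term through the stationary coupling $\lambda^\infty$, bound the coupling error $\frac{1}{T}\int_0^T\mathbb{E}|\lambda_t^\infty-\lambda_t|\,dt$ by a Volterra inequality fed by the tail $\int_t^\infty|\phi|$, and control the stationary fluctuation via Clark--Ocone together with a contractive bound on the Malliavin derivative of $\lambda^\infty$. The paper executes this last step exactly as you anticipate in your final paragraph, through Lemma~\ref{lem: maj esp cond}, which gives $\mathbb{E}_u\bigl[|D_{(u,\rho)}\lambda_t^\infty|\bigr]\le \mathds{1}_{\{\rho\le\lambda_u^\infty\}}\,\psi^{(\alpha)}(t-u)$ with $\psi^{(\alpha)}=\sum_{k\ge1}\alpha^k|\phi|^{*k}\in L^1$; note that the Volterra inequality lives at the level of the (conditional expectation of the) Malliavin derivative rather than on $c(u)$ itself, which sidesteps any chain-rule issue since $h$ is only Lipschitz.
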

Before, giving the proof of Theorem \ref{thm: conv_rate_hawkes_cont}, let us write a remark about the state-of-the-art.

\begin{remark}
    Note that the constant depends on $ \alpha \Vert \phi \Vert_1 $, $ m := \int_0^t t\vert\phi(t)\vert \,dt $, and the expectation of a stationary Hawkes process. In particular, the condition $ \alpha \Vert \phi \Vert_1 <1 $ is crucial for the proof, as the constant may "explode" when $ \alpha \Vert \phi \Vert_1 \to 1 $.  
    Moreover, this condition is fundamental for the existence and uniqueness of the stationary version of the process, as established in \cite{bremaud_stability_1996}.
\end{remark}
\begin{remark}[Hawkes with inhibition]
    In this remark, we focus on Hawkes processes with inhibition. Those are non-linear Hawkes processes where $h$ is defined as the positive-part function, that is
    $$\lambda_t = \max \left(0,  \mu + \iint_{(0,t)\times \RR_+} \phi(t-s) \1_{\theta \leq \lambda_s} N(ds,d\theta) \right).$$
    Hawkes process with inhibition was first mentioned in \cite{bremaud_stability_1996} but the interest in such process occurs in the last decades. Law of Large Numbers, a Central Limit Theorem and large deviation results have been proved in \cite{cattiaux_limit_2022} where the authors restrict $\phi$ to be a compactly supported signed measurable function.\\
    We therefore extend their CLT result by considering $h: x \mapsto \max(0,x)$ and $\phi$ both satisfying Assumption \ref{assump: noyau}.
\end{remark}

\begin{remark}
    Malliavin's calculus is employed in \cite{torrisi_gaussian_2016} in a more general context but does not provide an optimal bound for the Wasserstein distance between a Gaussian distribution and the martingale $F^T$ defined by \eqref{def: F^T}. More recently, in \cite{hillairet_malliavin-stein_2022}, the bound was refined in the context of a linear Hawkes process with an exponential kernel. This was possible thanks to all the result of Subsection \ref{subsec: Malliavin} that is due to \cite{hillairet_malliavin-stein_2022}. In particular, they get
    $$d_W(F^T, G) \leq \frac{C}{\sqrt{T}}.$$
    Futhermore, a generalisation for linear Hawkes process has been developped in \cite{khabou_normal_2024}. This generalisation maintains the previous inequality for linear Hawkes processes with kernel with norm less than $1$. In our case, we move beyond the linearity assumption and prove a more general result.
\end{remark}

\begin{proof}[Proof of Theorem \ref{thm: conv_rate_hawkes_cont}]
    Let $(H,\lambda)$ be a Hawkes process as defined in Definition \ref{def:Hawkes}, i.e. with intensity
    $$\lambda_t = h \left( \mu + \iint_{(0,t)\times \RR_+} \phi(t-s) \1_{\rho \leq \lambda_s} N(ds,d\rho) \right), \quad t\geq 0.$$
    Since 
    $$F^T := \frac{H_{T}- \int_0^{T} \lambda_u du}{\sqrt{T}} = \delta \left( \z^T Z^T \right)$$
    with $\z^T_{(t,\theta)} = \1_{\theta \leq \lambda_t}$ and $Z^T_t = \frac{\1_{[0,T]}(t)}{\sqrt{T}}$, we can apply Theorem \ref{thm: main} to get 
    $$d_W(F, G) \leq \E{\left| \sigma^2 - \frac{1}{T}\int_0^T \lambda_u du \right|} + \frac{1}{T^{3/2}} \E{\int_0^T \lambda_u du} := T_1 + T_2 .$$
    Using the triangular inequality, we have
    $$T_1 \leq \E{\left| \frac{1}{T} \int_0^T \E{\lambda^{\infty}_t} dt - \frac{1}{T}\int_0^T \lambda_t^\infty dt\right|} + \frac{1}{T} \int_0^T \E{\left| \lambda_t^\infty - \lambda_t \right|} dt := T_{1,1} + T_{1,2}$$
    where $\sigma^2$ is the mean of the stationary Hawkes process defined by its intensity
    $$\lambda^{\infty}_t = h \left( \mu + \iint_{(-\infty,t)\times \RR_+} \phi(t-s) \1_{\rho \leq \lambda^{\infty}_s} N(ds,d\rho) \right) =: h \left( \mu + \xi^{\infty}_{t} \right), \quad t\geq 0.$$
    We split the proof into $3$ parts in which we deal with $T_{1,1}$, $T_{1,2}$ and $T_{2}$.\\
    
    \textbf{Upper bound of $T_{1,1}$.}\\
    First, let us use Cauchy-Schwarz inequality and get
    $$T_{1,1} \leq \sqrt{\frac{1}{T^2}  \E{ \left(\int_0^T \E{\lambda_t^\infty} -\lambda_t^\infty dt \right)^2}} = \sqrt{\frac{2}{T^2} \E{\int_0^T \int_0^t \left(\E{\lambda_t^\infty} -\lambda_t^\infty\right)\left(\E{\lambda_s^\infty} -\lambda_s^\infty\right)  ds dt}}.$$
    We fix $(t,s)\in [0,T]^2$. According to Clark-Ocone formula (see \cite{hillairet_poisson_2024}), 
    $$\lambda_t^\infty -  \E{\lambda_t^\infty} = \int_{\RR^2} \left( D_{(u,\rho)}\left(\lambda_t^\infty \right)\right)^\mathrsfs{P} \left(N(du,d\rho)- dud\rho \right) = \delta\left( \left(D_{(\cdot,\cdot)}\left(\lambda_t^\infty \right)\right)^\mathrsfs{P}\right).$$
    where $\left( D_{(u,\rho)}\left(\lambda_t^\infty \right)\right)^\mathrsfs{P}$ is the predictable projection of the Malliavin derivative of $\lambda_t^\infty$. Hence, using Lemma \ref{lem: prod_div}, we get
    $$\E{\left(\lambda_t^\infty -  \E{\lambda_t^\infty}\right)\left(\lambda_s^\infty -  \E{\lambda_s^\infty}\right)}  = \E{\int_0^s \int_{\RR_+} \left(D_{(u,\rho)}\left(\lambda_t^\infty \right)\right)^\mathrsfs{P} \left(D_{(u,\rho)}\left(\lambda_s^\infty \right) \right)^\mathrsfs{P} du d\rho }$$
    Moreover, for fixed $(u,\rho)\in \RR_+^2$, this predictable projection is almost surely equal to the conditional expectancy of the Malliavin derivative of $\lambda_t^\infty$ when it is conditioned by $\Fcal^N_{u^-}$ (see Remark 3.5 in \cite{zhang_clarkocone_2009}):
    $$\left( D_{(u,\rho)}\left(\lambda_t^\infty \right)\right)^\mathrsfs{P} = \EE_{u^-} \left[D_{(u,\rho)}\left(\lambda_t^\infty \right)\right], \quad a.s.$$

    Therefore, we have
    \begin{align}
    \E{\left(\lambda_t^\infty -  \E{\lambda_t^\infty}\right)\left(\lambda_s^\infty -  \E{\lambda_s^\infty}\right)} & = \E{\int_0^s \int_{\RR_+} \EE_u\left[D_{(u,\rho)}\left(\lambda_t^\infty \right)\right] \EE_u\left[D_{(u,\rho)}\left(\lambda_s^\infty \right) \right]du d\rho }
     \nonumber \\
    & \leq \E{\int_0^s \int_{\RR_+} \EE_u\left[ \left|D_{(u,\rho)}\left(\lambda_t^\infty \right)\right|\right] \EE_u\left[\left|D_{(u,\rho)}\left(\lambda_s^\infty \right) \right| \right]du d\rho }. \label{ineq prod delta}
    \end{align}
    Lemma \ref{lem: maj esp cond} yields to
    \begin{align*}
    \E{\left(\lambda_t^\infty -  \E{\lambda_t^\infty}\right)\left(\lambda_s^\infty -  \E{\lambda_s^\infty}\right)} &\leq \E{\int_0^s \int_{\RR_+} \1_{\rho \leq \lambda_u^{\infty}} \psi^{(\alpha)}(t-u) \psi^{(\alpha)}(s-u)} du d\rho \\
    &\leq \int_0^s \E{\lambda_u^{\infty} }\psi^{(\alpha)}(t-u) \psi^{(\alpha)}(s-u) du .
    \end{align*}
    Thus, we compute
    \begin{align*}
    T_{1,1} &\leq \sqrt{ \frac{2}{T^2} \int_0^T \int_0^t \int_0^s \E{\lambda_u^{\infty} }\psi^{(\alpha)}(t-u) \psi^{(\alpha)}(s-u) du ds dt}.
    \end{align*}
    We now make use of Fubini theorem to switch the integrals and to get
    \begin{align*}
    T_{1,1} &\leq \sqrt{ \frac{2}{T^2} \int_0^T \int_u^T \int_t^T \E{\lambda_u^{\infty} }\psi^{(\alpha)}(t-u) \psi^{(\alpha)}(s-u) ds dt du} \leq \sqrt{ \frac{2\E{\lambda_0^{\infty} } \left\Vert \psi^{(\alpha)} \right\Vert_1^2}{T^2}T} \leq \frac{C}{\sqrt{T}}.
    \end{align*}

    \textbf{Upper bound of $T_{1,2}$.}\\
    In this part, we find an upper bound of $T_{1,2}$ which we recall just below:
    $$T_{1,2} = \frac{1}{T} \int_0^T \E{\left| \lambda_t^\infty - \lambda_t \right|} dt.$$
    For $t\in [0,T]$, since $h$ is $\alpha$-Lipschitz, we get
    \begin{align*}
    &\left\vert \lambda_t^{\infty} - \lambda_t \right\vert \\
    &\leq \alpha \left\vert \int_{(-\infty,0] \times \RR_+} \phi(t-s) \1_{\rho \leq \lambda_s^{\infty}} N(ds,d\rho) + \int_{(0,t) \times \RR_+} \phi(t-s) \left( \1_{\rho \leq \lambda_s^{\infty}}- \1_{\rho \leq \lambda_s} \right)N(ds,d\rho)\right\vert .
    \end{align*}
    Therefore, 
    $$\E{\left\vert \lambda_t^{\infty} - \lambda_t \right\vert} \leq \alpha  \int_{-\infty}^t \left| \phi(t-s)\right| \E{\lambda^{\infty}_s} ds + \alpha \int_0^t \left|\phi(t-s)\right| \E{\left\vert \lambda_s^{\infty} - \lambda_s \right\vert}ds  .$$
    Moreover, using the fact that $\E{\lambda_{\cdot}^{\infty}}$ is constant, we obtain
    $$\E{\left\vert \lambda_t^{\infty} - \lambda_t \right\vert} \leq \alpha \E{\lambda^{\infty}_0} \int_{t}^{\infty} \left| \phi(s)\right| ds + \alpha\int_0^t \left|\phi(t-s)\right| \E{\left\vert \lambda_s^{\infty} - \lambda_s \right\vert}ds  .$$
    And so,
    \begin{align*}
        \int_0^T \E{\left\vert \lambda_t^{\infty} - \lambda_t \right\vert} dt &\leq \alpha \E{\lambda^{\infty}_0} \int_0^T \int_{t}^{\infty} \left| \phi(s)\right| ds dt+ \alpha \int_0^T \int_0^t \left|\phi(t-s)\right| \E{\left\vert \lambda_s^{\infty} - \lambda_s \right\vert}ds dt.
    \end{align*}
    We now make use of Fubini theorem and we get:
    \begin{align*}
        \int_0^T \E{\left\vert \lambda_t^{\infty} - \lambda_t \right\vert} dt  &\leq \alpha \E{\lambda^{\infty}_0} \int_0^T \int_{t}^{\infty} \left| \phi(s)\right| ds dt+ \alpha \int_0^T \left(\int_s^T \left|\phi(t-s)\right| dt \right) \E{\left\vert \lambda_s^{\infty} - \lambda_s \right\vert} ds\\
        & \leq \alpha \E{\lambda^{\infty}_0} \int_0^T \int_{t}^{\infty} \left| \phi(s)\right| ds dt+ \alpha \left\Vert \phi\right\Vert_1 \int_0^T \E{\left\vert \lambda_s^{\infty} - \lambda_s \right\vert} ds.
    \end{align*}
    
    Hence, by rearranging the terms, we get:
    \begin{equation}
    \label{eq: maj_esp_lambda_stat}
        \int_0^T \E{\left\vert \lambda_t^{\infty} - \lambda_t \right\vert} dt \leq \frac{\alpha \E{\lambda^{\infty}_0}}{1-\alpha \left\Vert \phi\right\Vert_1} \int_0^T \int_{t}^{\infty} \left| \phi(s)\right| ds dt \leq \frac{\alpha \E{\lambda^{\infty}_0}}{1-\alpha \left\Vert \phi\right\Vert_1} \int_0^{\infty} t \left| \phi(t)\right| dt = C.
    \end{equation}
    
    Note that the constant is well-defined thanks to the Assumption \ref{assump: noyau}. Therefore, we have
    $$T_{1,2} \leq \frac{C}{T}.$$
 
    \textbf{Upper bound of $T_{2}$}\\
    Since $T_2 = \frac{1}{T^{3/2}} \E{\int_0^T \lambda_t dt}$, we have:
    $$T_2 \leq \frac{1}{T^{3/2}} \int_0^T  \E{ \left| \lambda^{\infty}_t -\lambda_t \right| }dt + \frac{1}{T^{3/2}} \int_0^T  \E{\lambda^{\infty}_t } dt.$$
    Using the fact that $\E{\lambda_{\cdot}^{\infty}}$ is constant combining with \eqref{eq: maj_esp_lambda_stat}, we get
    $$T_2 \leq \frac{C}{T^{3/2}} + \frac{\E{\lambda^{\infty}_0}}{\sqrt{T}}.$$
\end{proof}

\begin{remark}
    After completing our proof, we realized that certain elements of our proof were similar to the proof of Theorem 6 in \cite{zhu_nonlinear_2013}. In particular, the use of a stationary version of a Hawkes process to circumvent the difficulties posed by a non-linear and non-stationary Hawkes process is employed. However, theses computations has not been used to prove quantification as we have done.
\end{remark}

\subsection{Convergence rate with variance reduction}
In this subsection, we consider $(H,\lambda)$ a Hawkes process as defined in Definition \ref{def:Hawkes}, i.e. with intensity
    $$\lambda_t = h \left( \mu + \int_{(0,t)\times \RR_+} \phi(t-s) \1_{\rho \leq \lambda_s} N(ds,d\rho) \right), \quad t\geq 0.$$
Whereas, Theorem \ref{thm: conv_rate_hawkes_cont} gives the convergence of non-linear Hawkes process towards a Gaussian in a more general context that the existing result in \cite{zhu_nonlinear_2013}, the variance of Gaussian is complicated to compute and it is mostly impossible to give an explicit value. To deal with this problem, we consider a "variance reduction" which will give a similar results but with the standard Gaussian $\Ncal (0,1)$. To do so, we define $F^T$ by taking $f^T = g^T = \lambda$, i.e.
$$F^T := \int_{[0,T]\times \RR_+} \1_{\theta \leq \lambda_t} \frac{1}{\sqrt{T \lambda_t}} \left( N(dt,d\theta)-dtd\theta\right) = \delta \left( \z Z^T \right)$$
with $\z_{(t,\theta)} :=  \1_{\theta \leq \lambda_t}$ and $Z_t^T := \frac{\1_{[0,T]}(t)}{\sqrt{T \lambda_t}}$ for  $T>0$ and $(t,\theta) \in [0,T]\times \RR_+$. Indeed, an easy computation yields $\text{Var}\left(\left|F^T\right|^2 \right)=1$ for all $T>0$.
\begin{theorem}
\label{thm: conv_rate_hawkes_cont2}
    Let  $G \sim \Ncal (0,1)$. Then, under Assumption \ref{assump: noyau} with $h$ non-decreasing and $\phi$ non-negative,
    $$d_W(F^T, G) \leq \frac{C}{\sqrt{T}}.$$
    Here, $C$ depends on $\alpha$ and $\Vert \phi \Vert_1$.
\end{theorem}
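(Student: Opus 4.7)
The plan is to apply part (3) of Theorem \ref{thm: main} with $f^T = g^T = \lambda$; this is licit because a direct computation gives $\mathrm{Var}(F^T) = 1$ for every $T > 0$. The resulting bound features four contributions: the third moment $\E{(F^T)^3}$, an integral of $\E{\sqrt{\lambda_t}\sqrt{\lambda_s}\,|D_{(t,0)}(Z_s^T)|}$, an integral involving the divergence $\delta(\z^T D_{(s,0)}(Z^T))$, and a conditional-expectation term of similar type. A crucial preliminary observation is that, because $\phi \geq 0$ and $h$ is non-decreasing with $h > 0$, Proposition \ref{prop: comparaison hawkes} applied to the SDE for $\lambda$ forces $\lambda_t \geq h(\mu) =: c_0 > 0$ almost surely, which is what gives meaning to $Z^T_t = (T\lambda_t)^{-1/2}$ in the first place.

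First I would make the Malliavin derivative explicit:
\[
D_{(t,0)}(Z_s^T) = \frac{1}{\sqrt{T}}\left(\frac{1}{\sqrt{\lambda_s \circ \eps_{(t,0)}^+}} - \frac{1}{\sqrt{\lambda_s}}\right),
\]
and use the lower bound $c_0$ together with the elementary inequality $|a^{-1/2}-b^{-1/2}| \leq \frac{|a-b|}{2 c_0^{3/2}}$ (valid for $a,b \geq c_0$) to deduce $|D_{(t,0)}(Z_s^T)| \leq \frac{C}{\sqrt{T}}|D_{(t,0)}(\lambda_s)|$. Arguing exactly as for the term $T_{1,1}$ in the proof of Theorem \ref{thm: conv_rate_hawkes_cont}, one iterates the Lipschitz estimate $|D_{(t,0)}(\lambda_s)| \leq \alpha |D_{(t,0)}(\xi_s)|$ and solves the resulting Volterra-type inequality to obtain an integrable kernel $\psi^{(\alpha)}$ satisfying $\EE_t[|D_{(t,0)}(\lambda_s)|] \leq \psi^{(\alpha)}(s-t)$, with $\|\psi^{(\alpha)}\|_1$ controlled by $(1-\alpha\|\phi\|_1)^{-1}$.

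With these bounds in hand, the second and fourth terms in Theorem \ref{thm: main}(3) reduce, after conditioning on $\Fcal_t^N$ and using that $\E{\sqrt{\lambda_s}}$ is uniformly bounded by comparison with the stationary version, to expressions of the form $\frac{C}{T^{3/2}}\int_0^T \int_t^T \psi^{(\alpha)}(s-t)\,ds\,dt = O(1/\sqrt{T})$. For the divergence term $\delta(\z^T D_{(s,0)}(Z^T))$, the strategy is to realize it as a compensated Poisson integral over $[s,T]\times\RR_+$ and bound its $L^2$ norm via the Itô isometry by an integral of $\E{\lambda_u |D_{(s,0)}(Z_u^T)|^2}$, which is of order $1/T$ by the same kernel estimate; a Cauchy--Schwarz against the remaining factors then again produces $O(1/\sqrt{T})$.

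The main obstacle is controlling the third moment $\E{(F^T)^3}$. The approach is to invoke the Heisenberg commutation relation (Proposition \ref{prop: heisenberg}) together with two applications of the integration-by-parts formula (Proposition \ref{prop:IPP}) to rewrite $\E{(F^T)^3}$ as a finite sum of expectations of iterated Malliavin derivatives and divergences of $\z^T Z^T$, each of which can be treated by the same kernel/comparison estimates used above, yielding $|\E{(F^T)^3}| \leq C/\sqrt{T}$. The technical subtlety is that the expansion produces several nested divergence terms whose $L^1$-control relies crucially on the subcriticality condition $\alpha\|\phi\|_1 < 1$ to keep $\|\psi^{(\alpha)}\|_1$ finite; this is why the stated constant $C$ ultimately depends only on $\alpha$ and $\|\phi\|_1$.
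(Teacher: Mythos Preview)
Your proposal follows essentially the same route as the paper: apply part~(3) of Theorem~\ref{thm: main}, use the lower bound $\lambda_t\geq h(\mu)$ (valid because $h$ is non-decreasing and $\phi\geq 0$) to control $|D_{(t,0)}Z_s^T|$ by $|D_{(t,0)}\lambda_s|$, and invoke the Volterra-type estimate $\EE_t[|D_{(t,0)}\lambda_s|]\leq\psi^{(\alpha)}(s-t)$ (the paper's Lemma~\ref{lem: maj esp cond}) together with $\|\psi^{(\alpha)}\|_1<\infty$. The only substantive difference is the third moment: the paper does \emph{not} bound the expansion term by term via kernel estimates but instead proves an exact identity (Lemma~\ref{lem: moment 3}),
\[
\E{(F^T)^3}=\frac{1}{T^{3/2}}\int_0^T\E{\lambda_t^{-1/2}}\,dt\;\leq\;\frac{h(\mu)^{-1/2}}{\sqrt{T}},
\]
which comes from the observation that in the Heisenberg expansion of $D_{(t,0)}(|F^T|^2)$ the cross terms have zero $\Fcal_t^N$-conditional expectation. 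Your term-by-term plan is workable (using, e.g., the inequality $(1-\sqrt{x})^2\leq 1-x$ for $x\in[0,1]$ to reduce squared increments to first-order ones), but it is noticeably more laborious than the exact cancellation, and your phrase ``each of which can be treated by the same kernel/comparison estimates'' hides the fact that the term $\E{\sqrt{\lambda_t}\,F^T\,\delta(\z^T D_{(t,0)}Z^T)}$ requires splitting $F^T$ along $\Fcal_t^N$ before the kernel bound applies.
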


\begin{proof}
    We start this proof by make use of Theorem \ref{thm: main} in order to get
    \begin{align*}
        d_W(F^T,G) &\leq \E{\left( F^T\right)^3} +\frac{1}{T} \int_{0}^T \int_{t}^T\E{  \sqrt{\lambda_t}\sqrt{\lambda_s} \left| D_{(t,0)}\left( Z_s^T\right) \right| }ds dt \\
            & \quad + \frac{1}{\sqrt{T}} \int_{0}^T\int_{t}^T \E{  \sqrt{\lambda_t}\lambda_s \left|  \delta\left( \z D_{(s,0)}\left(Z^T\right)\right) D_{(t,0)}\left( Z_s^T\right)\right| } ds dt\\
            &\quad -\frac{2}{T}\E{\int_{0}^T  \sqrt{\lambda_t} \EE_t\left[\int_t^T \sqrt{\lambda_s} D_{(t,0)}\left(Z^T_s \right) ds \right] dt . }
    \end{align*}
    Thereupon, thanks to Lemma \ref{lem: maj_Dlambda12}, we have
    \begin{align*}
        d_W(F^T,G) &\leq \E{\left( F^T\right)^3} +\frac{1}{T^{3/2}} \int_{0}^T \int_{t}^T\E{  \sqrt{\lambda_t}\frac{D_{(t,0)}\left( \lambda_s\right)}{\lambda_s} }ds dt \\
            & \quad + \frac{1}{T} \int_{0}^T\int_{t}^T \E{  \sqrt{\lambda_t} \left|  \delta\left( \z D_{(s,0)}\left(Z^T\right)\right) \right| \frac{D_{(t,0)}\left( \lambda_s \right)}{\sqrt{\lambda_s}}} ds dt\\
            &\quad +\frac{2}{T^{3/2}}\E{\int_{0}^T  \sqrt{\lambda_t} \EE_t\left[\int_t^T \frac{D_{(t,0)}\left( \lambda_s\right)}{\lambda_s} ds \right] dt .}
    \end{align*}
    Now, since $h$ is non-decreasing and $\phi$ is a non-negative function, we have $\lambda_s \geq h(\mu),\, s \in [0,T]$ and so
    \begin{align*}
        d_W(F^T,G) &\leq \E{\left( F^T\right)^3} +\frac{C}{T^{3/2}} \int_{0}^T \int_{t}^T\E{  \sqrt{\lambda_t}D_{(t,0)}\left( \lambda_s\right) }ds dt \\
            & \quad + \frac{C}{T} \int_{0}^T\int_{t}^T \E{  \sqrt{\lambda_t} \left|  \delta\left( \z D_{(s,0)}\left(Z^T\right)\right) \right| D_{(t,0)}\left( \lambda_s \right)} ds dt\\
            &\quad +\frac{C}{T^{3/2}}\E{\int_{0}^T  \sqrt{\lambda_t} \EE_t\left[\int_t^T D_{(t,0)}\left( \lambda_s\right) ds \right] dt }.
    \end{align*}
    In addition, using Lemma \ref{lem: maj esp cond}, we obtain that
    \begin{align}
        d_W(F^T,G) &\leq \E{\left( F^T\right)^3} +\frac{C}{T^{3/2}} \int_{0}^T \E{  \sqrt{\lambda_t} \int_{t}^T\psi^{(\alpha)}(t-s)ds} dt \nonumber \\
            & \quad + \frac{C}{T} \int_{0}^T\int_{t}^T \E{  \sqrt{\lambda_t} \left|  \delta\left( \z D_{(s,0)}\left(Z^T\right)\right) \right| D_{(t,0)}\left( \lambda_s \right)} ds dt \label{eq: Maj_dist_proof_var_reduc}\\
            &\quad +\frac{C}{T^{3/2}}\E{\int_{0}^T  \sqrt{\lambda_t} \int_t^T \psi^{(\alpha)}(t-s) ds dt }. \nonumber
    \end{align}
    We deal with the term containing the divergence operator by employing Lemma \ref{lemma: maj_esp_cond_delta} and the inequality $\lambda \geq h(\mu)$. Such computation yields
    \begin{align*}
        \EE_s\left[|\delta\left(\z^T D_{(s,0)} Z^T)\right)|\right] &\leq \frac{C}{\sqrt{T}} \EE_s \left[ \int_{s}^T \frac{D_{(s,0)}\left(\lambda_u \right)}{\sqrt{\lambda_u}}  du \right] \leq \frac{C}{\sqrt{T}} \int_{s}^T \EE_s \left[ D_{(s,0)}\left(\lambda_u \right) \right] du .
    \end{align*}
    Following on from this, we make use of Lemma \ref{lem: maj esp cond} followed by the inequality $\Vert \psi^{(\alpha)} \Vert_1 \leq \frac{1}{1-\alpha \Vert \phi \Vert_1} <+\infty$ and we get
    \begin{align*}
        \EE_s\left[|\delta\left(\z^T D_{(s,0)} Z^T)\right)|\right] &\leq \frac{C}{\sqrt{T}} \int_{s}^T \psi^{(\alpha)}(u-s)  du \leq \frac{C}{\sqrt{T}}.
    \end{align*}
    Hence, implementing this in Equation \eqref{eq: Maj_dist_proof_var_reduc}, we have 
    \begin{align*}
        d_W(F^T,G) \leq & \E{\left( F^T\right)^3} +\frac{C}{T^{3/2}} \int_{0}^T \E{  \sqrt{\lambda_t} \int_{t}^T\psi^{(\alpha)}(t-s)ds} dt \\
        &+ \frac{C}{T^{3/2}} \int_{0}^T \int_{t}^T \E{  \sqrt{\lambda_t}  D_{(t,0)}\left( \lambda_s \right)} ds dt .
    \end{align*}
     In the same way as before, we employ Lemma Lemma \ref{lem: maj esp cond} and the inequality $\Vert \psi^{(\alpha)} \Vert_1 \leq \frac{1}{1-\alpha \Vert \phi \Vert_1} <+\infty$ to get
   $$d_W(F^T,G) \leq \E{\left( F^T\right)^3} +\frac{C}{T^{3/2}} \int_{0}^T \E{  \sqrt{\lambda_t}} dt .$$
    Finally, since $\E{\left( F^T\right)^3} \leq \frac{C}{\sqrt{T}}$ (see Lemma \ref{lem: moment 3}) and since $\E{\lambda_t} \leq C$, we obtain
    $$d_W(F^T ,G) \leq \frac{C}{\sqrt{T}} + \frac{C}{T^{3/2}} \int_{0}^T \sqrt{\E{  \lambda_t}} dt \leq  \frac{C}{\sqrt{T}}.$$
\end{proof}


\section{Application to Locally stationary Hawkes processes}
\label{sec:locally_stat}
This section relies mostly on the work of \cite{deschatre_limit_2025}. In their article, the author prove a LLN and fCLT for a class of locally stationary Hawkes processes. These processes are generalization of linear Hawkes process. Indeed, in the context of such processes, the baseline and the kernel of the intensity $\left(\lambda^T_t\right)_{t\in [0,T]}$ are both time varying and depend on $\frac{t}{T}$. Whereas these processes was defined in \cite{roueff_locally_2016} by Roueff et al., we do not exactly fit with their definition and decide to consider a special case that has been studied in \cite{deschatre_limit_2025}. In particular, we define the process as follow:
\begin{definition}
    A locally stationary Hawkes process on $[0,T]$ is any point process $H^T$ of the form \eqref{eq: point process} with intensity
    $$\lambda^T_t = \mu\left( \frac{t}{T}\right) + \int_{(0,t)} \gamma \left( \frac{t}{T}\right) \phi(t-s) dH^T_s, \quad t\in [0,T],$$
    where $\mu : [0,1] \to \RR_+$, $\gamma: [0,1] \to \RR_+$ and $\phi: \RR_+ \to \RR_+$.
\end{definition}
\begin{assumption}
\label{assump: locally stationary}
    The baseline and reproduction function $\mu$ and $\gamma$ are respectively continuous and continuously differentiable over $[0,1]$. Moreover, $\phi$ is integrable and such that 
    $$\left\| \phi \right\|_1 \sup_{x\in [0,1]} \left| \gamma(x)\right| <1.$$
\end{assumption}
We also recall Theorem 2 of \cite{deschatre_limit_2025}.
\begin{theorem}
    Under Assumption \ref{assump: locally stationary}, 
    $$\frac{1}{\sqrt{T}} \left( N^T_{Tu} - \E{N^T_{Tu}}\right) \xrightarrow[T \to +\infty]{} \int_0^u \left(1- \gamma(s) \left\| \phi \right\|_1 \right)^{-3/2} \mu(s)^{1/2} dB_s$$
    where $B$ stands for the standard Brownian motion.
\end{theorem}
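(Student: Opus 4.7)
The plan is to reduce the displayed functional CLT to the one-dimensional Wasserstein estimate provided by Proposition \ref{prop:main2}, after representing the centered process as a stochastic integral against the compensated martingale and using the resolvent of the Volterra kernel.

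\textit{Step 1 (Martingale representation).} Setting $m^T(s) := \E{\lambda^T_s}$, the deterministic function $m^T$ satisfies the linear Volterra equation
\begin{equation*}
m^T(t) = \mu(t/T) + \gamma(t/T) \int_0^t \phi(t-s) m^T(s)\, ds.
\end{equation*}
Letting $M^T_t := N^T_t - \int_0^t \lambda^T_s\, ds$ be the $\FF^N$-martingale and noting that $\lambda^T_t - m^T(t)$ solves a time-inhomogeneous Volterra equation driven by $dM^T$ with kernel $(t,s)\mapsto \gamma(t/T)\phi(t-s)$, introducing its resolvent $\Psi^T$ yields
\begin{equation*}
N^T_{Tu} - \E{N^T_{Tu}} = M^T_{Tu} + \int_0^{Tu}\bigl(\lambda^T_s - m^T(s)\bigr)\, ds = \int_0^{Tu} K^T(Tu,s)\, dM^T_s,
\end{equation*}
with $K^T(v,s) := 1 + \int_s^v \Psi^T(r,s)\, dr$.

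\textit{Step 2 (General bound).} Through the Poisson imbedding, this martingale integral is a divergence, so $(N^T_{Tu} - \E{N^T_{Tu}})/\sqrt{T} = \delta(\z^T Z^T)$ with $\z^T_{(s,\theta)} = \1_{\theta \leq \lambda^T_s}\1_{s \leq Tu}$ and $Z^T_s = K^T(Tu,s)/\sqrt{T}$; this is the quantity from \eqref{def: F^T} with $f^T = \lambda^T \1_{[0,Tu]}$ and $g^T(s) = T/|K^T(Tu,s)|^2$. Since $\mu,\gamma,\phi$ are deterministic, so is $K^T$, and (assuming uniform boundedness of $K^T$ obtained in Step 3) Theorem \ref{thm : Maj_g_determinist} applies with target variance $\sigma_u^2 := \int_0^u \mu(v)/(1-\gamma(v)\|\phi\|_1)^3\, dv$, reducing the Wasserstein distance to a control of
\begin{equation*}
\E{\biggl|\, \sigma_u^2 - \frac{1}{T}\int_0^{Tu} |K^T(Tu,s)|^2\, \lambda^T_s\, ds \,\biggr|} + O(T^{-1/2}).
\end{equation*}

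\textit{Step 3 (Asymptotics of $m^T$ and $K^T$).} Using Assumption \ref{assump: locally stationary} and a freezing-of-coefficients argument for the Volterra equations satisfied by $m^T$ and by $\Psi^T$, one gets $m^T(Ts) \to \mu(s)/(1-\gamma(s)\|\phi\|_1)$ uniformly on $[0,u]$ and $K^T(Tu,Ts) \to 1/(1-\gamma(s)\|\phi\|_1)$ uniformly in $s$, together with the uniform bound needed in Step 2. Multiplying and integrating reproduces the target variance density $\mu(s)/(1-\gamma(s)\|\phi\|_1)^3$, and the deviation of $\lambda^T$ from $m^T$ is controlled via the representation of Step 1 and Cauchy-Schwarz, so that the expectation above vanishes and one obtains a one-dimensional CLT with variance $\sigma_u^2$.

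\textit{Step 4 (Functional upgrade).} Applying the same machinery to linear combinations $\sum_k \alpha_k (N^T_{Tu_k} - \E{N^T_{Tu_k}})/\sqrt{T}$, which remain of the form $\delta(\z^T Z^T)$ with deterministic weight $\sum_k \alpha_k K^T(Tu_k,s)\1_{s \leq Tu_k}/\sqrt{T}$, yields convergence of all finite-dimensional distributions to those of the Gaussian martingale on the right-hand side of the claim. Tightness in the Skorokhod space follows from Aldous' criterion applied to the martingale representation of Step 1, using the uniform boundedness of $K^T$ and the bounded expected jump rate of $N^T/T$.

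The main obstacle is the quantitative analysis of the time-inhomogeneous resolvent $\Psi^T$: establishing $K^T(Tu,Ts) \to (1-\gamma(s)\|\phi\|_1)^{-1}$ uniformly in $s$, with a rate compatible with the $O(T^{-1/2})$ error already present in Theorem \ref{thm : Maj_g_determinist}, is the real work. It requires comparing $\Psi^T$ with the family of stationary resolvents of the frozen kernels $\gamma(v)\phi$, $v \in [0,1]$, and exploiting the $C^0$ regularity of $\mu$ and the $C^1$ regularity of $\gamma$ assumed in Assumption \ref{assump: locally stationary}.
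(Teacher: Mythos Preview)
The paper does not prove this theorem. Immediately before the statement the authors write ``We also recall Theorem 2 of \cite{deschatre_limit_2025}'': the result is quoted from Deschatre--Gruet--Lotz and no proof is given or intended. The paper's own contribution in this section is the \emph{next} theorem, a one-marginal quantitative bound $d_W(F^T,G)\le C/\sqrt{T}$ for $F^T=(N^T_T-\int_0^T\lambda^T_s\,ds)/\sqrt{T}$ with $g^T\equiv 1$ and limiting variance $\int_0^1(1-\gamma(x)\|\phi\|_1)^{-1}\mu(x)\,dx$ (note the exponent $-1$, not $-3$, because the normalisation is by the compensator, not by the mean). So there is nothing in the paper to compare your proposal against.

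As to your sketch itself: the general strategy (resolvent representation $N^T_{Tu}-\E{N^T_{Tu}}=\int_0^{Tu}K^T(Tu,s)\,dM^T_s$, then Malliavin--Stein on the divergence with deterministic $g^T$, then fdd $+$ tightness) is indeed the architecture of the proof in \cite{deschatre_limit_2025}, and the ``main obstacle'' you flag---the uniform asymptotics of the time-inhomogeneous resolvent $\Psi^T$---is exactly where the work lies there. Two small corrections: matching $Z^T_s=K^T(Tu,s)/\sqrt{T}$ to the paper's convention $Z^T_s=1/\sqrt{Tg^T(s)}$ gives $g^T(s)=K^T(Tu,s)^{-2}$, not $T\,K^T(Tu,s)^{-2}$; and Theorem \ref{thm : Maj_g_determinist} as stated requires $g^T$ bounded from below, which here amounts to $K^T$ bounded from above---this is part of what the resolvent analysis in Step 3 must deliver, so you are right to list it as the crux.
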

One can remark that if $\gamma$ is a constant function, the latter Theorem is similar to existing ones for linear Hawkes processes. We now define $F^T$ as previously with $f^T = \lambda^T$ and $g^T \equiv 1$, that is:
$$F^T := \frac{N^T_T - \int_0^T \lambda^T_s ds}{\sqrt{T}}=\frac{1}{\sqrt{T}}\iint_{(0,T]\times \RR_+} \1_{\theta \leq \lambda^T_s} \Ntil (ds,d\theta).$$
With these notations, we obtain the following theorem:
\begin{theorem}
    Suppose that Assumption \ref{assump: locally stationary} is in order and suppose that $\mu$ is Lipschitz and for any $t \in \RR_+$,$\int_0^t \phi(s) \sqrt{s} ds <+\infty$. Then, there exists $C>0$ such that for any $T>0$, 
    $$d_W \left( F^T , G\right) \leq \frac{C}{\sqrt{T}}$$
    where $G \sim \Ncal \left(0, \sigma^2 \right)$ with $\sigma^2 = \int_0^1 \left(1-\gamma(x)\left\|\phi \right\|_1 \right)^{-1}\mu(x) dx$.
\end{theorem}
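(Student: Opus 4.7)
The plan is to invoke Theorem \ref{thm : Maj_g_determinist} with $f^T := \lambda^T$ and $g^T \equiv 1$. Assumption \ref{assump: initiale} holds because $\lambda^T$ is defined through the Poisson imbedding: adding a point $(t,\theta)$ above the current intensity has no effect on the dynamics. Assumption \ref{assump: dans S} and the uniform bound $\sup_T \frac{1}{T}\E{\int_0^T \lambda^T_t \, dt} < +\infty$ both follow from the subcritical condition $\|\phi\|_1 \sup_{x} \gamma(x) < 1$, via a standard Gronwall argument on $m^T(t) := \E{\lambda^T_t}$. Theorem \ref{thm : Maj_g_determinist} then gives
\begin{align*}
d_W(F^T, G) \leq \E{\left|\sigma^2 - \frac{1}{T}\int_0^T \lambda^T_t \, dt\right|} + \frac{C}{\sqrt{T}},
\end{align*}
and it suffices to bound the remaining expectation by $C/\sqrt T$. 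Introducing $\bar m(x) := \mu(x)/(1-\gamma(x)\|\phi\|_1)$ and $\epsilon^T(t) := m^T(t) - \bar m(t/T)$, the change of variables $x=t/T$ gives $\frac{1}{T}\int_0^T \bar m(t/T)\, dt = \int_0^1 \bar m(x)\, dx = \sigma^2$ exactly, so by the triangle inequality
\begin{align*}
\E{\left|\sigma^2 - \frac{1}{T}\int_0^T \lambda^T_t\, dt\right|} \leq \E{\left|\frac{1}{T}\int_0^T (\lambda^T_t - m^T(t))\, dt\right|} + \frac{1}{T}\int_0^T |\epsilon^T(t)|\, dt.
\end{align*}

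For the fluctuation term, I would reproduce the Clark-Ocone argument used to control $T_{1,1}$ in the proof of Theorem \ref{thm: conv_rate_hawkes_cont}. By Jensen's inequality, the squared fluctuation is bounded by $\frac{2}{T^2}\int_0^T\int_0^t \mathrm{Cov}(\lambda^T_s,\lambda^T_t)\, ds\, dt$. Since $h(x) = x$, the Malliavin derivative $D_{(u,\rho)} \lambda^T_t$ satisfies a linear Volterra equation whose kernel is dominated by $\|\gamma\|_\infty \phi$; the subcritical bound $\|\gamma\|_\infty \|\phi\|_1 < 1$ guarantees an integrable resolvent $\tilde\psi$, and the analogue of Lemma \ref{lem: maj esp cond} gives $\mathrm{Cov}(\lambda^T_s,\lambda^T_t) \leq \int_0^s m^T(u) \tilde\psi(t-u)\tilde\psi(s-u)\, du$. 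Fubini then yields an $O(1/T)$ variance and hence an $O(1/\sqrt T)$ fluctuation bound.

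For the bias, taking expectation in the intensity equation and using the identity $\bar m(t/T) = \mu(t/T) + \gamma(t/T)\|\phi\|_1 \bar m(t/T)$ produces the Volterra equation
\begin{align*}
\epsilon^T(t) = \gamma(t/T)\int_0^t \phi(u) \epsilon^T(t-u)\, du + R^T(t),
\end{align*}
where
\begin{align*}
R^T(t) = \gamma(t/T)\int_0^t \phi(u)\bigl[\bar m((t-u)/T) - \bar m(t/T)\bigr] du - \gamma(t/T)\bar m(t/T)\int_t^\infty \phi(u)\, du.
\end{align*}
Since $\mu$ is Lipschitz and $\gamma$ is $C^1$ with $\gamma\|\phi\|_1$ bounded away from $1$, $\bar m$ is Lipschitz and bounded on $[0,1]$. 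Interpolating between its Lipschitz bound and its sup-norm via $\min(Lu/T,\, 2\|\bar m\|_\infty) \leq C\sqrt{u/T}$ and using $\int_0^\infty \phi(u)\sqrt u\, du < \infty$, the regularity part of $R^T$ is uniformly $O(1/\sqrt T)$; for the tail, $\int_t^\infty \phi(u)\, du \leq t^{-1/2}\int_t^\infty \sqrt u\,\phi(u)\, du \leq C/\sqrt t$ combined with $\frac{1}{T}\int_0^T t^{-1/2}\, dt = 2/\sqrt T$ handles the time average. Integrating the Volterra identity on $[0,T]$ and swapping the order of integration,
\begin{align*}
\int_0^T |\epsilon^T(t)|\, dt \leq \|\gamma\|_\infty \|\phi\|_1 \int_0^T |\epsilon^T(t)|\, dt + C\sqrt T,
\end{align*}
and the subcritical contraction yields $\frac{1}{T}\int_0^T |\epsilon^T(t)|\, dt \leq C/\sqrt T$, as needed.

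The main obstacle is the bias analysis: extracting the $1/\sqrt T$ rate from the interplay between the weak moment hypothesis $\int_0^\infty \phi(u)\sqrt u\, du < \infty$, the Lipschitz regularity of $\bar m$, and the time-inhomogeneous kernel $\gamma(t/T)\phi(t-s)$. The interpolation $\min(a,b) \leq \sqrt{ab}$ is the crucial trick that converts regularity plus boundedness into the $\sqrt{\cdot}$-moment weight on $\phi$, matching precisely the hypothesis of the theorem; a comparable gain $\int \phi(u)\, u^\alpha\, du < \infty$ would, by the same scheme, yield an $O(T^{-\alpha})$ rate.
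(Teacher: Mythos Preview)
Your proof is correct and follows the same overall architecture as the paper's: apply Theorem \ref{thm : Maj_g_determinist} with $f^T=\lambda^T$, $g^T\equiv 1$, then split the remaining expectation into a fluctuation term $\E{|\frac{1}{T}\int_0^T(\lambda^T_t-m^T(t))\,dt|}$ and a bias term $\frac{1}{T}\int_0^T|\epsilon^T(t)|\,dt$. The difference is that the paper outsources both pieces to results in \cite{deschatre_limit_2025} (their Lemmas 11, 13 for the fluctuation, their Proposition 3 for the bias), whereas you supply self-contained arguments: for the fluctuation you reuse the Clark--Ocone covariance bound from the proof of Theorem \ref{thm: conv_rate_hawkes_cont}, dominating the time-inhomogeneous kernel $\gamma(t/T)\phi(t-s)$ by $\|\gamma\|_\infty\phi$; for the bias you derive the Volterra equation for $\epsilon^T$ explicitly and close it by the contraction $\|\gamma\|_\infty\|\phi\|_1<1$. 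Your interpolation $\min(Lu/T,\,2\|\bar m\|_\infty)\le C\sqrt{u/T}$ is exactly what converts the Lipschitz regularity of $\bar m$ into the $\sqrt u$-moment condition on $\phi$, and your treatment of the tail $\int_t^\infty\phi$ via $t^{-1/2}\int_t^\infty\sqrt u\,\phi(u)\,du$ is clean. The paper obtains $o(1/\sqrt T)$ for the bias by quoting the external result, while your direct computation yields only $O(1/\sqrt T)$; this is immaterial for the final statement but explains why the cited proposition may carry slightly finer information. Your approach has the advantage of being self-contained within the paper's framework and of making transparent where each hypothesis (Lipschitz $\mu$, $C^1$ $\gamma$, $\int\phi(u)\sqrt u\,du<\infty$) is actually used.
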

\begin{proof}
    By applying Theorem \ref{thm : Maj_g_determinist}, we obtain that there exists $C>0$ such that for any $T>0$, 
    $$d_W \left( F^T , G\right) \leq \E{\left|\sigma^2 - \frac{1}{T} \int_0^T \lambda^T_s ds \right|} + \frac{C}{\sqrt{T}}.$$
    Moreover, by the triangular inequality,
    $$\E{\left|\sigma^2 - \frac{1}{T} \int_0^T \lambda^T_s ds \right|} \leq \left|\sigma^2 - \frac{1}{T} \int_0^T \E{\lambda^T_s} ds \right| + \E{\left|\frac{1}{T} \int_0^T \E{\lambda^T_s}-\lambda^T_s ds \right|}.$$
    According to Lemmas 13 and 11 of \cite{deschatre_limit_2025}, there exists $C>0$ such that for any $T>0$,
    $$\left|\frac{1}{T} \int_0^T \E{\lambda^T_s}-\lambda^T_s ds \right|^2 \leq \frac{C}{T}.$$
    Besides, since $\mu$ is supposed to be Lipschitz and for any $t \in \RR_+$,$\int_0^t \phi(s) \sqrt{s} ds <+\infty$, Proposition 3 of \cite{deschatre_limit_2025} gives:
    $$\left| \frac{1}{T} \int_0^T \int_s^T \Gamma^T(t,s) \mu \left(\frac{s}{T}\right) dt ds - \int_0^1 \left[\left(1-\gamma(x)\left\|\phi \right\|_1 \right)^{-1}-1\right]\mu(x) dx\right| = o\left( \frac{1}{\sqrt{T}}\right)$$
    which yields
    $$\left|\sigma^2 - \frac{1}{T} \int_0^T \E{\lambda^T_s} ds \right| = o\left( \frac{1}{\sqrt{T}}\right).$$
\end{proof}

 
\section{Application to Discrete Hawkes processes}
\label{sec:discrete}
In this section, we leverage our main result to examine a specific instance of the Hawkes process: the discrete Hawkes process. This class of Hawkes processes has been previously investigated by Seol in \cite{seol_limit_2015}. In their work, they establish that when considering a Hawkes process founded on the Bernoulli distribution and subject to certain conditions, it converges in distribution to a Gaussian distribution. Building upon this foundation, \cite{quayle_etude_2022} extended this analysis to Hawkes processes based on the Poisson distribution. Here, we employ Theorem \ref{thm : Maj_g_determinist} to determine the convergence rate of the aforementioned limit theorem for a Hawkes process governed by the Poisson law. Let us define such Hawkes processes.

\begin{definition}
    Let $(\alpha_k)_{k\in \NN}$ be a sequence of positive real numbers such that
    $$\sum\limits_{k=0}^n \alpha_k <1\quad \text{and} \quad \sum\limits_{k=0}^n k\alpha_k < + \infty .$$
    We define $\left(X_k \right)_{k\in \NN^*}$ a sequence of random variables such that $X_1 \sim\Pcal(\alpha_0)$ and for any $k\in \NN^*$, $X_{k} \mid \Fcal^{(d)}_{k-1} \sim \Pcal \left( \alpha_0 + \sum\limits_{i=1}^{k-1} \alpha_{k-i} X_i \right)$ where $\Fcal^{(d)}_{k-1} = \sigma \left( X_i,\, i\leq k-1 \right)$ and $\Pcal(\beta)$ stands for the Poisson law with mean $\beta >0$.\\
    A discrete Hawkes process $(H_n)_{n\in\NN}$ is given by $H_n = \sum\limits_{k=1}^n X_k$, $n\in \NN$. 
\end{definition}

With this definition, we can introduce the following notations:
\begin{align}
    \lambda^{(d)}_k &:= \alpha_0 + \sum\limits_{i=1}^{k-1} \alpha_{k-i} X_i, \quad k\in \NN ;\\
    \lambda_t &:= \alpha_0 \1_{[0,1)}(t) + \sum\limits_{k=2}^{\infty} \lambda^{(d)}_k \1_{[k-1,k)}(t), \quad t\in \RR_+.
\end{align}

Moreover, since $X_k = \int_{[k-1,k)\times \RR_+} \1_{\theta \leq \lambda_t} N(dt,d\theta)$ in law, we get 
$$H_n = \int_{(0,n]\times \RR_+} \ind{\theta \leq \lambda_t} N(dt,d\theta), \quad n\in \NN.$$
We consider the normalized martingale $(F^n)_{n\in \NN}$ define by
$$F^n := \frac{H_n - \int_0^n \lambda_t dt}{\sqrt{n}} = \int_{(0,n]\times \RR_+} \frac{\ind{\theta \leq \lambda_t}}{\sqrt{n}} \left( N(dt,d\theta) -dtd\theta \right), \quad n\in \NN.$$
Note that by construction, it is evident that $\lambda$ is $\FF$-predictable. Let us introduce two useful notations that will be used throughout this section.
\begin{notation}
    For the sequence $\left(\alpha_k\right)_{k\in \NN}$, we define $|\alpha |$ and $\varsigma^2$ by
    $$| \alpha | := \sum\limits_{k\geq 1} \alpha_k \quad \text{and} \quad \varsigma^2 := \frac{\alpha_0}{1-|\alpha |}.$$
\end{notation}
In order to apply Theorem \ref{thm: main}, we need first to verify the regularity assumption reacquired by it. We then give some results that can be found in \cite{quayle_etude_2022}.
\begin{proposition}
\label{prop: maj_moment_Xk}
    For any $k\geq 2$, 
    $$\sup\limits_{n \in \NN} \E{X_n} \leq \frac{\alpha_0}{1-|\alpha|}=\varsigma^2 \quad \text{and}\quad \sup\limits_{n \in \NN} \E{X_n^k} < +\infty.$$
\end{proposition}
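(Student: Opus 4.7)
The plan is to treat the two estimates separately: the first-moment bound reduces to a linear recursion on $n$, while the $k$-th moment bound requires a nested induction exploiting both the Poisson conditional structure of $X_n$ and the contraction hypothesis $|\alpha|<1$.

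For the first bound, I would set $m_n := \E{X_n}$ and use $X_n \mid \Fcal^{(d)}_{n-1} \sim \Pcal(\lambda^{(d)}_n)$ to deduce the linear recursion $m_n = \alpha_0 + \sum_{i=1}^{n-1}\alpha_{n-i}\, m_i$. A direct induction on $n$, combined with the algebraic identity $\alpha_0 + |\alpha|\varsigma^2 = \varsigma^2$, then yields $m_n \leq \varsigma^2$ for every $n$ (the base case $m_1 = \alpha_0 \leq \varsigma^2$ being immediate).

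For the $k$-th moment bound, I would argue by induction on $k \geq 1$, the case $k=1$ being the first part. The core identity is the Poisson conditional moment formula $\E{X_n^k \mid \Fcal^{(d)}_{n-1}} = \sum_{j=0}^k S(k,j)(\lambda^{(d)}_n)^j$, where $S(k,j)$ denotes the Stirling numbers of the second kind (so that $S(k,k)=1$). Isolating the top-degree term gives $\E{X_n^k} = \E{(\lambda^{(d)}_n)^k} + \sum_{j=0}^{k-1} S(k,j)\E{(\lambda^{(d)}_n)^j}$. Applying Minkowski's inequality to $\lambda^{(d)}_n = \alpha_0 + \sum_i \alpha_{n-i} X_i$ together with the inductive bound $\sup_n \E{X_n^j}<\infty$ for $j<k$, I can absorb the tail sum into a constant $M_k$ independent of $n$. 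Setting $a_n := \E{X_n^k}^{1/k}$ and $A_n := \max_{i\leq n} a_i$, Minkowski applied once more yields $a_n^k \leq (\alpha_0 + |\alpha| A_{n-1})^k + M_k$. Since $|\alpha|<1$, the function $R \mapsto R^k - (\alpha_0 + |\alpha| R)^k$ diverges to $+\infty$ as $R \to \infty$, so I can pick $R_* \geq a_1$ satisfying $R_*^k \geq (\alpha_0+|\alpha|R_*)^k + M_k$; an induction on $n$ then propagates $A_n \leq R_*$, which delivers $\sup_n \E{X_n^k} \leq R_*^k < \infty$.

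The main difficulty is structuring the outer induction on $k$ so that the lower-order terms of the Poisson moment expansion are genuinely absorbed into a uniform constant $M_k$; the contraction condition $|\alpha|<1$ is precisely what makes the fixed-point argument at the top order work, mirroring its role in the stability theory of continuous Hawkes processes.
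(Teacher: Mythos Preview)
Your argument is correct. The paper does not supply a proof of this proposition but imports it from \cite{quayle_etude_2022}, so there is no in-paper argument to compare against directly. Your approach is the natural one: the linear recursion $m_n=\alpha_0+\sum_{i<n}\alpha_{n-i}m_i$ closed by the identity $\alpha_0+|\alpha|\varsigma^2=\varsigma^2$ for the first moment, and for higher $k$ the Touchard--Bell expansion $\E{X_n^k\mid\Fcal^{(d)}_{n-1}}=\sum_{j\le k}S(k,j)(\lambda^{(d)}_n)^j$ followed by Minkowski on $\lambda^{(d)}_n$ and a fixed-point bound exploiting $|\alpha|<1$. This is precisely the discrete analogue of the Volterra sub-solution technique the paper develops in its appendix for the continuous-time intensity (Proposition~\ref{prop: sous_sol volterra} and Proposition~\ref{prop: existence}), so your self-contained proof fits the paper's methodology well even though it supplies what the paper merely quotes.
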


\begin{proposition}[\cite{quayle_etude_2022}, Proposition 1.15]
\label{prop: reecriture_de_F_n}
    For any $n\geq 1$,
    $$\sqrt{n}F^n = \left( 1 -| \alpha | \right)H_n - n\alpha_0 + \eps_n,$$
    where $\left( \eps_n\right)_{n\in \NN}$ is a sequence of non-negative random variables such that:
    $$\frac{\eps_n}{\sqrt{n}} \xrightarrow[n\to +\infty]{\PP} 0\quad \text{and} \quad \E{\eps_n} \leq \left(1+ \sum\limits_{k\geq 1} k \alpha_k \right) \sup_{n\geq 1} \E{X_n}.$$
\end{proposition}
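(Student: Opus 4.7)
The plan is to prove the identity by direct computation from the definitions, identifying $\eps_n$ as the residual term, and then handling its expected value via Fubini.

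First, I would compute $\int_0^n \lambda_t dt$ explicitly. Since $\lambda$ is piecewise constant with value $\lambda_k^{(d)} = \alpha_0 + \sum_{i=1}^{k-1} \alpha_{k-i} X_i$ on $[k-1,k)$ (with the convention $\lambda_1^{(d)} = \alpha_0$), I get
$$\int_0^n \lambda_t dt = \sum_{k=1}^n \lambda_k^{(d)} = n\alpha_0 + \sum_{k=2}^n \sum_{i=1}^{k-1} \alpha_{k-i} X_i.$$
Swapping the order of summation via $j = k-i$ rewrites the double sum as $\sum_{i=1}^{n-1} X_i \sum_{j=1}^{n-i} \alpha_j$. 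Plugging into $\sqrt{n} F^n = H_n - \int_0^n \lambda_t dt$ and writing $H_n = \sum_{i=1}^n X_i$ together with $(1-|\alpha|) H_n = H_n - |\alpha|\sum_{i=1}^n X_i$, a short algebraic rearrangement yields
$$\eps_n := \sqrt{n} F^n - (1-|\alpha|) H_n + n\alpha_0 = |\alpha| X_n + \sum_{i=1}^{n-1} X_i \sum_{j=n-i+1}^{\infty} \alpha_j.$$
This is manifestly non-negative since every $\alpha_k$ and $X_i$ is non-negative, which establishes the decomposition itself.

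Second, I would bound $\E{\eps_n}$. Taking expectations termwise and factoring out $\sup_k \E{X_k}$ (finite by Proposition \ref{prop: maj_moment_Xk}) gives
$$\E{\eps_n} \leq \sup_{k\geq 1}\E{X_k}\left(|\alpha| + \sum_{i=1}^{n-1} \sum_{j=n-i+1}^{\infty}\alpha_j\right).$$
For the inner double sum, I would swap the order of summation once more: for each fixed $j\geq 2$, the set of admissible $i\in\{1,\dots,n-1\}$ with $i\geq n-j+1$ has cardinality at most $\min(n-1,j-1)\leq j$, so the double sum is bounded by $\sum_{j\geq 1} j\alpha_j$. Using $|\alpha|\leq 1$ (a consequence of the summability assumption $\sum_k \alpha_k < 1$) one recovers the claimed bound $\E{\eps_n} \leq (1 + \sum_{k\geq 1} k\alpha_k)\sup_{k\geq 1}\E{X_k}$.

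Finally, since the bound on $\E{\eps_n}$ is a constant independent of $n$, Markov's inequality gives $\PP(\eps_n/\sqrt{n}>\delta)\leq \E{\eps_n}/(\delta\sqrt{n}) \to 0$ for every $\delta>0$, which is the desired convergence in probability. The whole argument is essentially bookkeeping; the only step requiring genuine care is the Fubini rearrangement that converts $\sum_{i=1}^{n-1}\sum_{j\geq n-i+1}\alpha_j$ into a sum controlled by $\sum_{j\geq 1} j\alpha_j$, where one has to split cases $j<n$ versus $j\geq n$ to obtain a clean $n$-free bound.
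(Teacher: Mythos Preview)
Your argument is correct. The paper does not actually give a proof of this proposition; it is quoted as Proposition~1.15 of \cite{quayle_etude_2022} and used as a black box, so there is no in-paper proof to compare against. Your direct computation---expressing $\int_0^n \lambda_t\,dt$ as $n\alpha_0+\sum_{i=1}^{n-1}X_i\sum_{j=1}^{n-i}\alpha_j$, subtracting to isolate $\eps_n=|\alpha|X_n+\sum_{i=1}^{n-1}X_i\sum_{j\geq n-i+1}\alpha_j$, and then bounding the tail sum by $\sum_{j\geq 1} j\alpha_j$ via Fubini---is the natural route and matches what one would expect the cited proof to do. One small simplification: you do not really need to split into cases $j<n$ versus $j\geq n$; the inequality $\min(n-1,j-1)\leq j-1\leq j$ already gives the $n$-free bound directly.
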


Thanks to Proposition \ref{prop: maj_moment_Xk}, we can prove the following lemmas. Since the lack of dependency of $n$ in Lemma \ref{lemma: discret_verif_hyp_2} is not require, we decide to not write the proof of the first inequality. Yet, we have detailed
the proof of the second one.

\begin{lemma}
\label{lemma: discret_verif_hyp_2}
    For any $n\in \NN$, $$\E{\left( \int_0^n \lambda_t dt\right)^2} < +\infty \quad \text{and} \quad \sup\limits_{n\in \NN} \E{\frac{1}{n} \int_0^n \lambda_t dt } \leq \varsigma^2.$$
\end{lemma}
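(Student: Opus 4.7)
My plan is to rewrite the integral in a discrete form and then reduce both claims to moment bounds on $X_k$ supplied by Proposition \ref{prop: maj_moment_Xk}.

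The starting observation is that by the piecewise definition of $\lambda_t$ and with the convention $\lambda_1^{(d)} := \alpha_0$ (obtained from the empty sum), one has
\begin{equation*}
\int_0^n \lambda_t\, dt \;=\; \sum_{k=1}^n \lambda_k^{(d)}.
\end{equation*}
This identity is the key reduction: all questions about $\int_0^n \lambda_t\, dt$ become questions about $\sum_{k=1}^n \lambda_k^{(d)}$.

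For the second inequality, I would use the conditional Poisson structure. Since $X_k \mid \Fcal_{k-1}^{(d)} \sim \Pcal(\lambda_k^{(d)})$, taking conditional expectation yields $\E{X_k} = \E{\lambda_k^{(d)}}$. Proposition \ref{prop: maj_moment_Xk} gives $\sup_n \E{X_n} \leq \varsigma^2$, hence $\E{\lambda_k^{(d)}} \leq \varsigma^2$ for every $k$. Averaging over $k = 1,\dots,n$ then gives $\E{\frac{1}{n}\int_0^n \lambda_t\, dt} \leq \varsigma^2$ uniformly in $n$, which is the second claim.

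For the first inequality, I would expand the square and apply Cauchy--Schwarz:
\begin{equation*}
\E{\left(\int_0^n \lambda_t\, dt\right)^2} \;=\; \sum_{k,j=1}^n \E{\lambda_k^{(d)} \lambda_j^{(d)}} \;\leq\; \sum_{k,j=1}^n \sqrt{\E{(\lambda_k^{(d)})^2}\,\E{(\lambda_j^{(d)})^2}}.
\end{equation*}
It then suffices to bound $\E{(\lambda_k^{(d)})^2}$. Expanding $\lambda_k^{(d)} = \alpha_0 + \sum_{i=1}^{k-1} \alpha_{k-i} X_i$ and using Cauchy--Schwarz together with the bound $\sup_n \E{X_n^2} < +\infty$ from Proposition \ref{prop: maj_moment_Xk}, one obtains $\E{(\lambda_k^{(d)})^2} \leq C_k$ for some finite constant (depending on $k$ since we are only claiming finiteness for fixed $n$, not uniformity). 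Summing over the finitely many pairs $(k,j)$ with $k,j \leq n$ yields finiteness of $\E{(\int_0^n \lambda_t\, dt)^2}$.

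I do not expect any genuine obstacle: the claim is essentially an integrability check supporting the hypotheses of Theorem \ref{thm : Maj_g_determinist}. The only small subtlety is keeping track of the convention $\lambda_1^{(d)} = \alpha_0$ so that the discrete sum really equals the integral; everything else is a direct consequence of the uniform moment bounds already at hand.
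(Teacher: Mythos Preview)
Your proposal is correct. For the second inequality, your route differs slightly from the paper's: you use the tower property to identify $\E{\lambda_k^{(d)}} = \E{X_k}$ and then invoke the bound $\E{X_k} \leq \varsigma^2$ from Proposition~\ref{prop: maj_moment_Xk} directly, whereas the paper expands $\E{\lambda_k^{(d)}} = \alpha_0 + \sum_{i=1}^{k-1}\alpha_{k-i}\E{X_i}$ and bounds term by term to reach $\alpha_0 + \varsigma^2|\alpha| = \varsigma^2$. Your observation is a shade cleaner; the paper's version has the minor advantage of not relying on the conditional Poisson structure. For the first inequality the paper explicitly declines to give a proof (noting that uniformity in $n$ is not needed), so your Cauchy--Schwarz argument via $\sup_n \E{X_n^2} < +\infty$ simply fills a gap the authors left open.
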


\begin{proof}[Proof of Lemma \ref{lemma: discret_verif_hyp_2}]
     For any $k\in \NN$, Proposition \ref{prop: maj_moment_Xk} yields $ \E{X_k} \leq \varsigma^2$. Hence,
    $$\E{\lambda_k^{(d)}} = \alpha_0 + \sum_{i=1}^{k-1} \alpha_{k-i} \E{X_i} \leq \alpha_0 + \varsigma^2 \sum_{i=1}^{k-1} \alpha_{i}  
 = \varsigma^2.$$
    Therefore, for any $n\in \NN$,
    $$\E{\frac{1}{n}\int_0^n \lambda_t dt}  \leq \frac{\alpha_0}{1-|\alpha |}.$$
    This inequality holds for any $n \in \NN$ so it is true for the supremum over $n$.
\end{proof}

\begin{theorem}
\label{thm : discret_martingale}
    There exists $C>0$ such that for $n\in \NN$,
    $$d_W( F^n, G) \leq \frac{C}{\sqrt{n}}, \quad G\sim \Ncal(0,\varsigma^2).$$
\end{theorem}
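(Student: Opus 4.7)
The plan is to apply Theorem \ref{thm : Maj_g_determinist} with the choice $f^n := \lambda$ (the piecewise constant stochastic intensity built from the $\lambda_k^{(d)}$) and the deterministic choice $g^n \equiv 1$, so that $F^n$ coincides with the quantity $F^T$ of the general framework. First, the hypotheses of that theorem must be verified for this specific choice. Assumption \ref{assump: initiale} follows from the construction: on $[k-1,k)$ the process $\lambda_t$ is an $\Fcal_{k-1}^{(d)}$-measurable function of $X_1,\dots,X_{k-1}$, and each $X_j = N((j-1,j]\times[0,\lambda_j^{(d)}])$ counts only Poisson points strictly beneath the graph of $\lambda$; adding a point at $(t,\theta)$ with $\theta > \lambda_t$ therefore leaves every $X_j$ untouched and hence does not affect $\lambda$. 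Assumption \ref{assump: dans S} together with the uniform moment bound $\sup_n \E{\frac{1}{n}\int_0^n \lambda_t dt} \leq \varsigma^2$ is precisely Lemma \ref{lemma: discret_verif_hyp_2}, and $g^n\equiv 1$ is trivially bounded from below.

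Theorem \ref{thm : Maj_g_determinist} applied with $\sigma^2 = \varsigma^2$ then yields
$$ d_W(F^n, G) \leq \E{\left\vert \varsigma^2 - \frac{1}{n}\int_0^n \lambda_t dt \right\vert} + \frac{C}{\sqrt{n}}, $$
so it remains to prove that the first term is itself $O(1/\sqrt{n})$. This is where Proposition \ref{prop: reecriture_de_F_n} enters. Since $\int_0^n \lambda_t dt = H_n - \sqrt{n} F^n$, substituting the identity $\sqrt{n}F^n = (1-|\alpha|)H_n - n\alpha_0 + \eps_n$, solving for $H_n$, and simplifying give
$$ \varsigma^2 - \frac{1}{n}\int_0^n \lambda_t dt = -\frac{|\alpha|}{1-|\alpha|}\cdot\frac{F^n}{\sqrt{n}} + \frac{\eps_n}{n(1-|\alpha|)}. $$
Taking the $L^1$-norm, the first piece is controlled by $\E{|F^n|}\leq \sqrt{\E{(F^n)^2}} \leq \varsigma$ — the Itô isometry for the compensated Poisson integral applied to the predictable integrand $\1_{\theta\leq\lambda_t}/\sqrt{n}$ combined with Lemma \ref{lemma: discret_verif_hyp_2} — while the second is bounded by $C/n$ using $\E{\eps_n}\leq C$ from Proposition \ref{prop: reecriture_de_F_n}. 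Summing gives $\E{|\varsigma^2-\frac{1}{n}\int_0^n \lambda_t dt|} \leq C/\sqrt{n}$, which concludes.

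The main obstacle is not a delicate estimate: once Theorem \ref{thm : Maj_g_determinist} and Proposition \ref{prop: reecriture_de_F_n} are available, the argument collapses to a single algebraic identity. The conceptual point, however, is to realize that the compensator $\int_0^n\lambda_t dt$ is itself linear in $H_n$ up to the lower-order error $\eps_n$, so that its fluctuation around the deterministic equivalent $n\varsigma^2$ is governed by the very martingale $F^n$ whose distance to a Gaussian one is trying to control. This self-referential structure is what makes the $1/\sqrt{n}$ rate essentially automatic.
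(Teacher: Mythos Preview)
Your proof is correct and follows the same strategy as the paper: apply Theorem \ref{thm : Maj_g_determinist}, then control $\E{|\varsigma^2 - \frac{1}{n}\int_0^n\lambda_t\,dt|}$ via the algebraic identity linking this quantity to $F^n/\sqrt{n}$ and $\eps_n/n$, bounding the pieces with $\E{|F^n|}\leq\sqrt{\E{(F^n)^2}}\leq\varsigma$ and $\E{\eps_n}\leq C$. The only difference is cosmetic: the paper reaches the identity by first invoking an external lemma (Lemma 1.17 of \cite{quayle_etude_2022}) and then substituting Proposition \ref{prop: reecriture_de_F_n}, whereas you derive it directly from $\int_0^n\lambda_t\,dt = H_n - \sqrt{n}F^n$ and Proposition \ref{prop: reecriture_de_F_n}; your route is slightly more self-contained and in fact gives the coefficient $\frac{1}{1-|\alpha|}$ for $\eps_n/n$ rather than the paper's $\frac{2|\alpha|-1}{1-|\alpha|}$, but both are harmless $O(1)$ constants.
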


\begin{proof}
   For $f:= \left(\lambda_t^n \right)_{t\in [0,n]}$ and $g \equiv 1$, we have that $f\in \Ccal$, $g\in \Ccal$. Moreover, thanks to Lemma \ref{lemma: discret_verif_hyp_2}, $f$ and $g$ verify all the assumptions needed to apply Theorem \ref{thm : Maj_g_determinist}. Then, we have that there exists $C>0$ such that for any $n\in \NN$,
   \begin{equation}
   \label{eq: maj_discret}
       d_W(F^n, G) \leq \E{\left|\varsigma^2 - \frac{1}{n} \int_0^n \lambda_t dt \right|} + \frac{C}{\sqrt{n}}, \quad G \sim \Ncal \left(0, \varsigma^2 \right).
   \end{equation}
    We now deal with the first right-hand side term of the inequality. Fix $n\in \NN$. Lemma (1.17) in \cite{quayle_etude_2022} yields
    $$\varsigma^2 - \frac{1}{n}\int_0^n \lambda_t dt = \varsigma^2 - \langle F,F\rangle_n = \frac{|\alpha |}{1-|\alpha |} \left( \alpha_0 - \left(1-|\alpha | \right)\frac{H_n}{n} -  \frac{1-|\alpha |}{|\alpha |} \frac{\eps_n}{n}\right)$$
    where $\langle \cdot, \cdot\rangle$ stands for the quadratic variation and $\eps_n$ is the same as in Proposition \ref{prop: reecriture_de_F_n}.
    Using Proposition \ref{prop: reecriture_de_F_n}, we have:
    \begin{align*}
        \varsigma^2 - \frac{1}{n}\int_0^n \lambda_t dt &=  \frac{|\alpha |}{1-|\alpha |} \left( \frac{\eps_n}{n} - \frac{F^n}{\sqrt{n}} - \frac{1-|\alpha |}{|\alpha |} \frac{\eps_n}{n}\right) =\frac{2| \alpha |-1}{1- |\alpha|} \frac{\eps_n}{n} - \frac{|\alpha |}{1-|\alpha |} \frac{F^n}{\sqrt{n}}.
    \end{align*}
    So, the Triangular Inequality with the positivity of $\eps_n$ lead to
    \begin{align*}
        \E{\left|\varsigma^2 - \frac{1}{n} \int_0^n \lambda_t dt \right|} &\leq \frac{\left|1-2| \alpha |\right|}{1- |\alpha|} \E{\frac{\eps_n}{n}} + \frac{|\alpha |}{1-|\alpha |} \E{\frac{\left|F^n\right|}{\sqrt{n}}}\\
        & \leq \frac{C}{n} + \frac{C}{\sqrt{n}}\sqrt{\E{\left|F^n\right|^2}}.
    \end{align*}
    Finally, by noticing that 
    $$\sup\limits_{n\in \NN}\E{\left| F^n\right|^2} = \sup\limits_{n\in \NN} \E{\frac{1}{n}\int_0^n \lambda_t dt} \leq \overline{C},$$
    we get the result from \eqref{eq: maj_discret}.
\end{proof}

\begin{theorem}
    There exists $C>0$ such that for any $n\in \NN$,
    $$d_W\left(\frac{H_n-n\varsigma^2}{\sqrt{n}}, \overline{G} \right)\leq \frac{C}{\sqrt{n}},\quad \overline{G} \sim \Ncal\left(0, \frac{\varsigma^2}{\left(1-| \alpha | \right)^2} \right) . $$
\end{theorem}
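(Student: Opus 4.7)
The plan is to reduce this statement to the previous theorem (Theorem \ref{thm : discret_martingale}) by using the algebraic identity provided by Proposition \ref{prop: reecriture_de_F_n}, which links $F^n$ directly to $H_n$.

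First, I would rearrange the identity $\sqrt{n} F^n = (1-|\alpha|) H_n - n\alpha_0 + \eps_n$ to obtain
\begin{equation*}
    \frac{H_n - n\varsigma^2}{\sqrt{n}} = \frac{F^n}{1-|\alpha|} - \frac{\eps_n}{\sqrt{n}(1-|\alpha|)},
\end{equation*}
where I used $\varsigma^2 = \alpha_0/(1-|\alpha|)$ so that the deterministic terms cancel. Next, observe that if $G \sim \Ncal(0,\varsigma^2)$ then $G/(1-|\alpha|) \sim \Ncal\left(0, \varsigma^2/(1-|\alpha|)^2\right) = \overline{G}$ in distribution. Using the scaling property $d_W(aX,aY) = |a|\, d_W(X,Y)$ together with the triangle inequality for $d_W$, and the elementary bound $d_W(X+Z,X) \leq \E{|Z|}$, I would write
\begin{equation*}
    d_W\!\left(\frac{H_n - n\varsigma^2}{\sqrt{n}},\, \overline{G}\right) \leq \frac{1}{1-|\alpha|}\, d_W(F^n, G) + \frac{1}{\sqrt{n}(1-|\alpha|)}\, \E{\eps_n}.
\end{equation*}

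The first term on the right is bounded by $C/\sqrt{n}$ thanks to Theorem \ref{thm : discret_martingale}. For the second, Proposition \ref{prop: reecriture_de_F_n} yields $\E{\eps_n} \leq (1+\sum_{k\geq 1}k\alpha_k)\sup_{n\geq 1}\E{X_n}$, which is finite by Proposition \ref{prop: maj_moment_Xk} and the summability hypothesis on $(\alpha_k)$. Combining the two bounds gives the desired $C/\sqrt{n}$ rate.

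In contrast to the preceding results, this statement involves no Malliavin or Stein computation of its own; it is a clean post-processing step. Consequently, there is no genuine obstacle: the entire argument hinges on the representation from Proposition \ref{prop: reecriture_de_F_n} and the fact that the remainder $\eps_n$ has expectation uniformly bounded in $n$. The only minor point of care is verifying that $d_W$ interacts with multiplication by $1/(1-|\alpha|)$ and with additive perturbations through the standard Lipschitz-test-function argument, but both properties are immediate from Definition \ref{def:W}.
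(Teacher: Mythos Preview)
Your proof is correct and follows essentially the same route as the paper: rewrite $(H_n-n\varsigma^2)/\sqrt{n}$ via Proposition \ref{prop: reecriture_de_F_n} as $\frac{1}{1-|\alpha|}F^n$ minus the remainder $\frac{\eps_n}{\sqrt{n}(1-|\alpha|)}$, then apply the triangle inequality for $d_W$, the scaling $d_W(aX,aY)=|a|\,d_W(X,Y)$, Theorem \ref{thm : discret_martingale}, and the uniform bound on $\E{\eps_n}$. The paper carries out the same two-term split slightly more explicitly at the level of test functions, but the argument is identical.
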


\begin{proof}
    First we write $\overline{F}^n:= \frac{H_n-n\varsigma^2}{\sqrt{n}}$. By definition of the Wasserstein distance, 
    $$d_W\left(\frac{H_n-n\varsigma^2}{\sqrt{n}}, \overline{G} \right) = \sup\limits_{u\in \mathrsfs{L}_1} \left\vert \E{u\left( \overline{F}^n \right)} - \E{u \left( \overline{G}\right)}\right\vert.$$
    Moreover, Proposition \ref{prop: reecriture_de_F_n} gives
    $$ \overline{F}^n = \frac{1}{1-| \alpha |} \left( F^n - \frac{\eps_n}{\sqrt{n}} \right)$$
    with $F^n$ and $\eps_n$ as defined in the previous theorem. Therefore, we have
    \begin{align*}
        d_W\left(\frac{H_n-n\varsigma^2}{\sqrt{n}}, \overline{G} \right) &\leq \sup\limits_{u\in \mathrsfs{L}_1} \left\vert \E{u\left( \frac{1}{1-| \alpha |} \left( F^n - \frac{\eps_n}{\sqrt{n}} \right) \right)} - \E{u\left( \frac{1}{1-| \alpha |} F^n\right)}\right\vert \\
        &\quad + \sup\limits_{u\in \mathrsfs{L}_1} \left\vert \E{u\left( \frac{1}{1-| \alpha |}  F^n \right)} - \E{u\left( \overline{G} \right)}\right\vert\\
        & = \sup\limits_{u\in \mathrsfs{L}_1} \left\vert \E{u\left( \frac{1}{1-| \alpha |} \left( F^n - \frac{\eps_n}{\sqrt{n}} \right) \right)} - \E{u\left( \frac{1}{1-| \alpha |} F^n\right)}\right\vert \\
        &\quad + d_W \left(\frac{1}{1-| \alpha |}  F^n,  \frac{1}{1-| \alpha |}  G\right)
    \end{align*}
    where $G \sim \Ncal \left(0,\varsigma^2 \right)$.\\
    Thanks to Theorem \ref{thm : discret_martingale}, we have that
    $$d_W \left(\frac{1}{1-| \alpha |}  F^n,  \frac{1}{1-| \alpha |}  G\right) = \frac{1}{1-| \alpha |} d_W(F^n,G) \leq \frac{C}{\sqrt{n}}.$$
    Let us deal with the other term. Take $u\in \mathrsfs{F}_W$ (defined in Equation \eqref{eq: def_FW}), we have:
    $$\left\vert \E{u\left( \frac{1}{1-| \alpha |} \left( F^n - \frac{\eps_n}{\sqrt{n}} \right) \right)} - \E{u\left( \frac{1}{1-| \alpha |} F^n\right)}\right\vert \leq \frac{1}{1-| \alpha |}\E{ \frac{\eps_n}{\sqrt{n}}}.$$
    Since this inequality holds for any $u\in \mathrsfs{F}_W$, it is also true when we take the supremum, i.e.
    $$\sup\limits_{u\in \mathrsfs{L}_1} \left\vert \E{u\left( \frac{1}{1-| \alpha |} \left( F^n - \frac{\eps_n}{\sqrt{n}} \right) \right)} - \E{u\left( \frac{1}{1-| \alpha |} F^n\right)}\right\vert \leq \frac{1}{1-| \alpha |}\E{ \frac{\eps_n}{\sqrt{n}}}.$$
    As for the proof of Theorem \ref{thm : discret_martingale}, the sequence $\left(\E{ \eps_n} \right)_{n\in \NN}$ is bounded. Therefore, there exists $C>0$ such that for any $n\in \NN$,
    $$d_W\left(\frac{H_n-n\varsigma^2}{\sqrt{n}}, \overline{G} \right)\leq \frac{C}{\sqrt{n}}.$$
\end{proof}


\section{Application to Nearly Unstable Linear Hawkes Processes}
\label{sec:nearly}

In this part, we give an application of Theorem \ref{thm: main}. In particular, we take $f= g= \left(\lambda_t^T \right)_{t\in [0,T]}$ where $\lambda^T$ is the intensity of a nearly unstable Hawkes process observed on $[0,T]$ (see \eqref{eq: int_nearly}). This Hawkes process has been introduce in \cite{jaisson_limit_2015}. We adopt their notations, which we outline below. 

\begin{notation}
    We consider $(a_T)_{T>0}$ a family of positive real numbers with $a_T <1$ such that : 
    $$ T (1-a_T) =1.$$
    Moreover, for $T>0$, we denote by $\phi^T$ a function such that $\phi^T = a_T \phi $ where $\phi$ is a non-negative function satisfying:
    $$\int_0^{+\infty} \left|\phi(t) \right| dt = 1 \quad \text{and} \quad \int_0^{+\infty} t\phi(t) dt = m <+\infty .$$
\end{notation}
With this notation, we introduce the nearly Hawkes process $H^T = \left( H^T_t\right)_{t\geq 0}$ as a linear empty history Hawkes process  where the intensity is given by 
\begin{equation}
\label{eq: int_nearly}
    \lambda^T_t := \mu + \iint_{(0,t)\times \RR} \phi^T(t-s) \ind{\theta \leq \lambda^T_s} N(ds,d\theta), \quad t\in [0,T].
\end{equation}

We decide to apply our results to two different normalized martingales. In the first case, we consider $f= g = \left(\E{\lambda^T_t} \right)_{t\in [0,T]}$ (see Proposition \ref{prop: nearly_esperance})  whereas in the second one, we study the case $f= g = \left(\lambda^T_t \right)_{t\in [0,T]}$. 

\begin{notation}
    For $T>0$, we denote by $\hat{F^T}$ and by $F^T$ the following quantities:
    $$F^T = \iint_{[0,T] \times \RR_+} \frac{\ind{\theta \leq \lambda_t^T}}{\sqrt{T \lambda_t^T}} \left(N(dt,d\theta) -dt d\theta \right) \quad \text{and} \quad \hat{F^T} = \iint_{[0,T] \times \RR_+} \frac{\ind{\theta \leq \E{\lambda_t^T}}}{\sqrt{T \E{\lambda_t^T}}} \left(N(dt,d\theta) -dt d\theta \right).$$
\end{notation}
As for the previous applications, Assumptions \ref{assump: dans S} and \ref{assump: initiale} hold in both cases. 

\paragraph{First case: $f= g = \left(\E{\lambda^T_t} \right)_{t\in [0,T]}$}~\\

In this proposition, we apply our Theorem \ref{thm : Maj_g_determinist} to $\hat{F^T}$.

\begin{proposition}
\label{prop: nearly_esperance}
    There exists $C>0$ such that for any $T>0$,
    $$d_W(\hat{F^T},G) \leq \frac{C}{\sqrt{T}}, \quad G \sim \Ncal \left(0,1 \right)$$
    where $C$ depends on $\Vert \phi \Vert_1$, $\int_0^{+\infty} t \phi(t) dt$ and $\mu$.
\end{proposition}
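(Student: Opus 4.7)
The idea is that this proposition is essentially a direct consequence of Theorem \ref{thm : Maj_g_determinist} with the particular choice $f^T(t) = g^T(t) = \E{\lambda_t^T}$, which makes the main discrepancy term vanish exactly. With these choices, $\hat F^T = \delta(\z^T Z^T)$ where $\z_{(t,\theta)}^T = \1_{\theta \leq \E{\lambda_t^T}}$ and $Z_t^T = (T \E{\lambda_t^T})^{-1/2} \1_{[0,T]}(t)$, so both $f^T$ and $g^T$ are deterministic functions of $t$.

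First, I would verify the standing assumptions. Assumption \ref{assump: initiale} is automatic since $f^T$ and $g^T$ are deterministic, hence invariant under the shift $\eps^+_{(t,\theta)}$. For Assumption \ref{assump: dans S}, the two moment conditions \eqref{rem: hypo1} reduce to $T < +\infty$ and $(\int_0^T \sqrt{\E{\lambda_t^T}}\, dt)^2 < +\infty$; the latter is finite for each fixed $T$ because the Volterra equation $\E{\lambda_t^T} = \mu + a_T \int_0^t \phi(t-s)\E{\lambda_s^T}\,ds$ has a locally bounded solution on $[0,T]$ (the resolvent of $a_T\phi$ is locally integrable for each $T$).

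Next, I would check the hypotheses of Theorem \ref{thm : Maj_g_determinist}: $g^T$ is obviously deterministic, and since $\phi \geq 0$ and $a_T > 0$ the Volterra identity gives $\E{\lambda_t^T} \geq \mu > 0$, so $g^T$ is bounded from below by $\mu$, independently of $T$. Moreover
\[
\sup_{T>0} \E{\frac{1}{T}\int_0^T \frac{f^T(t)}{g^T(t)}\,dt} = \sup_{T>0} \frac{1}{T}\int_0^T 1\,dt = 1 < +\infty,
\]
so all assumptions are met. Applying Theorem \ref{thm : Maj_g_determinist} with $\sigma^2 = 1$ yields
\[
d_W(\hat F^T, G) \leq \E{\left|1 - \frac{1}{T}\int_0^T 1 \, dt\right|} + \frac{C}{\sqrt T} = \frac{C}{\sqrt T},
\]
where the first term vanishes identically. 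Inspecting the proof of Theorem \ref{thm : Maj_g_determinist} gives $C = 1/\sqrt{\mu}$, which depends only on the baseline (with $\Vert \phi \Vert_1$ and $\int t\phi(t)\,dt$ appearing implicitly through the a priori bounds used to justify the finiteness of the moments).

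There is essentially no obstacle here; the whole point of choosing $f^T = g^T = \E{\lambda_t^T}$ (both deterministic) is precisely to kill the ratio term in Theorem \ref{thm : Maj_g_determinist}. The genuine difficulty, postponed to the case $f^T = g^T = \lambda_t^T$ where one replaces the deterministic normalisation by the stochastic intensity, is to control the extra Malliavin derivative contributions arising in Theorem \ref{thm: main}, item 3; this is where the true analysis of the nearly unstable regime (and the dependence on $\Vert \phi \Vert_1$ and $\int t\phi(t)\,dt$) will enter.
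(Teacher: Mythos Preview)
Your proposal is correct and follows exactly the paper's approach: the paper's proof consists of the single line ``Apply Theorem \ref{thm : Maj_g_determinist}'', and you have simply spelled out the verification of its hypotheses (deterministic $g^T$, lower bound $\mu$, and the ratio $f^T/g^T\equiv 1$) in detail. Your observation that the first term vanishes identically because $f^T=g^T$ is precisely the point.
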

\begin{proof}
    Apply Theorem \ref{thm : Maj_g_determinist}.
\end{proof}

\paragraph{Second case: $f= g = \left(\lambda^T_t \right)_{t\in [0,T]}$}~\\

\begin{proposition}
    There exists $C>0$ such that for all $T>0$, we have
    \begin{align*}
        d_W(F^T,G) &\leq \frac{C}{\sqrt{T}} + \frac{1}{T}\E{ \int_{0}^T \sqrt{\lambda_t^T} \sqrt{\EE_t \left[\int_t^T \left( 1 - \sqrt{\frac{\lambda_s^T}{\lambda_s^T\circ \eps_{(t,0)}^{+}}}\right)^2 ds\right]}dt} \\
        & \quad +\frac{1}{T^{3/2}}\E{ \int_{0}^T \sqrt{\lambda_t^T} \EE_t \left[\int_t^T \left( 1 - \sqrt{\frac{\lambda_s^T}{\lambda_s^T\circ \eps_{(t,0)}^{+}}}\right)^2 ds\right] dt}\\
        & \quad + \frac{2}{T^{3/2}}\E{ \int_{0}^T \sqrt{\int_t^T \left( 1 - \sqrt{\frac{\lambda_s^T}{\lambda_s^T\circ \eps_{(t,0)}^{+}}}\right)^2 ds }dt}.
    \end{align*}
\end{proposition}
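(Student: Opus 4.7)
The plan is to apply Theorem \ref{thm: main}, part 3, directly with $f^T=g^T=\lambda^T$, so that
$$F^T = \delta(\mathscr{Z}^T Z^T), \quad \mathscr{Z}^T_{(t,\theta)}=\ind{\theta\leq \lambda_t^T}, \quad Z_t^T=\frac{\ind{[0,T]}(t)}{\sqrt{T\lambda_t^T}},$$
and then to identify each of the four summands on the right-hand side of the theorem with the corresponding term in the claim. The crucial computation is the Malliavin derivative
$$D_{(t,0)}(Z_s^T) = \frac{1}{\sqrt{T\lambda_s^T}}\left(\sqrt{\frac{\lambda_s^T}{\lambda_s^T \circ \eps_{(t,0)}^+}} - 1\right), \quad s\geq t,$$
which, since $\phi\geq 0$ and therefore $\lambda_s^T\circ\eps_{(t,0)}^+\geq \lambda_s^T$ (by Proposition \ref{prop: comparaison hawkes}), is non-positive. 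Substituting this expression transforms every occurrence of $D$ into the quantity $1-\sqrt{\lambda_s^T/\lambda_s^T\circ\eps_{(t,0)}^+}$ that appears in the statement.

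The first summand $\E{(F^T)^3}$ is controlled by $C/\sqrt T$ by an argument analogous to Lemma \ref{lem: moment 3}: $F^T$ is a mean-zero compensated Poisson integral with $\text{Var}(F^T)=1$, and the normalization $1/\sqrt{T\lambda_t^T}$ produces a third-cumulant-type bound $\E{\int_0^T \lambda_t^T/(T\lambda_t^T)^{3/2}dt}=\E{\int_0^T T^{-3/2}(\lambda_t^T)^{-1/2}dt}$, which is of the desired order uniformly in $T$ using $\lambda_t^T\geq \mu$. For the second summand of Theorem \ref{thm: main}.3, inserting the derivative and using that $\sqrt{\lambda_t^T}$ is $\FF^N$-predictable to insert an inner $\EE_t$, I apply conditional Cauchy–Schwarz in $s$:
$$\EE_t\left[\int_t^T \left|1-\sqrt{\lambda_s^T/\lambda_s^T\circ\eps_{(t,0)}^+}\right|ds\right] \leq \sqrt{T-t}\,\sqrt{\EE_t\left[\int_t^T\left(1-\sqrt{\lambda_s^T/\lambda_s^T\circ\eps_{(t,0)}^+}\right)^2 ds\right]},$$
and the extra $\sqrt{T-t}\leq\sqrt T$ converts the prefactor $1/T^{3/2}$ into the target $1/T$.

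For the third summand of Theorem \ref{thm: main}.3, the key step is to condition on $\Fcal_s^N$ and invoke the $L^2$ isometry for compensated Poisson integrals with predictable integrands on $(s,T]$:
$$\EE_s\left[\left|\delta(\mathscr{Z}^T D_{(s,0)}(Z^T))\right|^2\right] = \frac{1}{T}\EE_s\left[\int_s^T\left(1-\sqrt{\lambda_u^T/\lambda_u^T\circ\eps_{(s,0)}^+}\right)^2du\right].$$
Combining this via Jensen with the factor $\sqrt{\lambda_t^T}\sqrt{\lambda_s^T}|1-\sqrt{\lambda_s^T/\lambda_s^T\circ\eps_{(t,0)}^+}|$ coming from the $|D|$ term, and rearranging using the predictability of the outer integrand, delivers the third term in the claim. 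For the last summand of Theorem \ref{thm: main}.3, the non-positivity of $D_{(t,0)}(Z_s^T)$ allows me to drop the minus sign and the absolute value, producing
$$\frac{2}{T^{3/2}}\E{\int_0^T \sqrt{\lambda_t^T}\int_t^T\bigl(1-\sqrt{\lambda_s^T/\lambda_s^T\circ\eps_{(t,0)}^+}\bigr)ds\, dt};$$
a final Cauchy–Schwarz in $s$, together with $\sqrt{\lambda_t^T}\sqrt{T-t}\leq \sqrt{T\lambda_t^T}$ and $\E{\lambda_t^T/T}$ uniformly bounded (which is the point of the normalisation $T(1-a_T)=1$, see Proposition \ref{prop: nearly_esperance}), produces the target fourth term.

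The main obstacle is the bookkeeping of the normalising constants in the nearly unstable regime: because $a_T\uparrow 1$, the intensity $\lambda_t^T$ grows like $T$, so each application of Cauchy–Schwarz must be calibrated to the scaling $T(1-a_T)=1$ so that the powers of $T$ line up as required. A secondary subtlety is establishing the cubic moment bound $\E{(F^T)^3}\leq C/\sqrt T$ with a constant independent of $T$, which needs an argument robust to the explosion of $\lambda^T$; the natural strategy is to exploit the normalisation in $Z^T$ and the uniform lower bound $\lambda_t^T\geq\mu$ to keep the moment computation under control.
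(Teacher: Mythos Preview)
Your approach takes the wrong branch of Theorem \ref{thm: main}: you apply part 3, whereas the paper's proof (and the structure of the stated bound) uses part 2. With $f^T=g^T=\lambda^T$, part 2 gives, writing $\delta_t:=\delta(\z^T D_{(t,0)}(Z^T))$,
\[
d_W(F^T,G)\leq \frac{C}{\sqrt{T}}
+\frac{1}{\sqrt{T}}\E{\int_0^T \sqrt{\lambda_t^T}\,|\delta_t|\,dt}
+\frac{1}{\sqrt{T}}\E{\int_0^T \sqrt{\lambda_t^T}\,|\delta_t|^2\,dt}
+\frac{2}{T}\E{\int_0^T |\delta_t|\,dt},
\]
where the $C/\sqrt T$ absorbs the first two terms of part 2 (since $f^T/g^T\equiv 1$ and $\lambda^T\geq\mu$). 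The conditional isometry
\[
\EE_t\bigl[|\delta_t|^2\bigr]=\frac{1}{T}\,\EE_t\!\left[\int_t^T\Bigl(1-\sqrt{\tfrac{\lambda_s^T}{\lambda_s^T\circ\eps_{(t,0)}^+}}\Bigr)^2 ds\right]
\]
then converts these three summands, via conditional Jensen for the first and third, into exactly the three extra terms of the proposition with the stated constants $1,\,1,\,2$.

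Your part-3 route does not reproduce this structure. The second extra summand in the claim, $\tfrac{1}{T^{3/2}}\E{\int_0^T\sqrt{\lambda_t^T}\,\EE_t[\int_t^T(\cdot)^2 ds]\,dt}$, comes from a $|\delta_t|^2$ term, and part 3 contains no such term; the cross term of part 3 mixes a derivative at $t$ with a divergence at $s>t$ and does not collapse to it under the manipulations you sketch. Similarly, your treatment of the last summand of part 3 yields, after Cauchy--Schwarz, $\tfrac{2}{T}\E{\int_0^T\sqrt{\lambda_t^T}\sqrt{\int_t^T(\cdot)^2 ds}\,dt}$, which still carries the factor $\sqrt{\lambda_t^T}$ inside the expectation; it is of the shape of the \emph{first} extra summand, not the stated fourth one, whose whole point is that $f^T/g^T=1$ removes the intensity from the prefactor. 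The appeal to ``$\E{\lambda_t^T/T}$ uniformly bounded'' cannot strip that $\sqrt{\lambda_t^T}$ from inside the expectation. Switch to part 2 of Theorem \ref{thm: main}.
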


\begin{proof}
    By applying Theorem \ref{thm: main}, we have that there exists $C>0$ such that for $T>0$,
    \begin{align*}
        d_W(F^T,G) &\leq \frac{C}{\sqrt{T}} + \frac{1}{\sqrt{T}}\E{ \int_{0}^T \sqrt{\lambda_t^T}\left|\delta \left( \z^T D_{(t,0)}\left(Z^T \right)\right) \right| dt} \\
            & \quad +\frac{1}{\sqrt{T}}\E{ \int_{0}^T \sqrt{\lambda_t^T} \left|\delta \left( \z^T D_{(t,0)}\left(Z^T \right)\right) \right|^2 dt}\\
            & \quad + \frac{2}{T}\E{ \int_{0}^T \left|\delta \left( \z^T D_{(t,0)}\left(Z^T \right)\right) \right|dt}.
    \end{align*}
    Moreover, for $t\in [0,T]$,
    \begin{align*}
    \EE_t \left[ \left|\delta \left( \z^T D_{(t,0)}\left(Z^T \right)\right) \right|^2 \right] &= \frac{1}{T}\EE_t \left[ \int_t^T  \lambda_s^T \left| D_{(t,0)}\left(\frac{1}{\sqrt{\lambda_s^T}} \right) \right|^2 ds \right] \\
    &= \frac{1}{T}\EE_t \left[\int_t^T \left( 1 - \sqrt{\frac{\lambda_s^T}{\lambda_s^T\circ \eps_{(t,0)}^{+}}}\right)^2 ds\right]. 
    \end{align*}
    Thus, we have:
        \begin{align*}
        d_W(F^T,G) &\leq \frac{C}{\sqrt{T}} + \frac{1}{T}\E{ \int_{0}^T \sqrt{\lambda_t^T} \sqrt{\EE_t \left[\int_t^T \left( 1 - \sqrt{\frac{\lambda_s^T}{\lambda_s^T\circ \eps_{(t,0)}^{+}}}\right)^2 ds\right]}dt} \\
            & \quad +\frac{1}{T^{3/2}}\E{ \int_{0}^T \sqrt{\lambda_t^T} \EE_t \left[\int_t^T \left( 1 - \sqrt{\frac{\lambda_s^T}{\lambda_s^T\circ \eps_{(t,0)}^{+}}}\right)^2 ds\right] dt}\\
            & \quad + \frac{2}{T^{3/2}}\E{ \int_{0}^T \sqrt{\int_t^T \left( 1 - \sqrt{\frac{\lambda_s^T}{\lambda_s^T\circ \eps_{(t,0)}^{+}}}\right)^2 ds }dt}.
    \end{align*}
\end{proof}

We propose in the following another upper bound of distance. This second upper bound allows us to link the distance with the third moment of $F^T$.

\begin{proposition}
    For $G\sim \Ncal(0,1)$, there exists $C>0$ such that for all $T>0$, we have
    \begin{align*}
    d_W(F^T, G) &  \leq \frac{C}{\sqrt{T}} +\frac{C}{T}\E{\int_{0}^T  \sqrt{\lambda_t^T} \sqrt{ \int_t^T \EE_t\left[\left| 1-\frac{\lambda^T_s}{\lambda^T_s \circ \eps_{(t,0)}^+} \right|^2 \right]ds }  dt } .
\end{align*}
\end{proposition}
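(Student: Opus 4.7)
The starting point is Theorem \ref{thm: main}(3) applied with $f^T = g^T = \lambda^T$ (so that $\mathrm{Var}(F^T)=1$ and the target law is standard normal), which delivers
\[
d_W(F^T,G) \;\leq\; \E{(F^T)^3} \;+\; \mathcal{A}^T + \mathcal{B}^T + \mathcal{C}^T ,
\]
where $\mathcal{A}^T, \mathcal{B}^T, \mathcal{C}^T$ are the three remainder integrals in that statement. The $\frac{C}{\sqrt{T}}$ piece of the target bound is produced at once by invoking Lemma \ref{lem: moment 3} (as was already done in the proof of Theorem \ref{thm: conv_rate_hawkes_cont2}) to dominate $\E{(F^T)^3}\leq C/\sqrt{T}$.

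The algebraic pivot is the following. Since $\phi\geq 0$ in the linear setting, adding a point at $(t,0)$ only increases future intensities, so $R_{t,s} := \sqrt{\lambda^T_s/\lambda^T_s\circ\eps^+_{(t,0)}} \in [0,1]$ for $s\geq t$, and we have the identity $\sqrt{\lambda^T_s}\,D_{(t,0)}Z^T_s = (R_{t,s}-1)/\sqrt{T}$. Because $1+R_{t,s}\geq 1$, the pointwise estimate
\[
(1-R_{t,s})^2 \;\leq\; (1-R_{t,s}^2)^2 \;=\; \Bigl|1 - \tfrac{\lambda^T_s}{\lambda^T_s\circ\eps^+_{(t,0)}}\Bigr|^{\!2}
\]
holds, and this is precisely what converts the square-rooted ratios of the preceding proposition into the unrooted squares appearing in the statement.

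For the two ``non-divergence'' remainders $\mathcal{A}^T$ and $\mathcal{C}^T$, each of the shape $T^{-3/2}\,\E{\int_0^T\sqrt{\lambda^T_t}\,\EE_t[\int_t^T (1-R_{t,s})\,ds]\,dt}$ (up to signs and constants), I would apply Cauchy--Schwarz pathwise in $s$ to produce a factor $\sqrt{T-t}\,\sqrt{\int_t^T(1-R_{t,s})^2\,ds}$, then Jensen's inequality to pull the conditional expectation $\EE_t$ inside the outer square root, and finally the algebraic estimate above. The extra $\sqrt{T}$ exactly compensates one power of $T^{1/2}$ in the prefactor, yielding the structural term $\frac{C}{T}\,\E{\int_0^T\sqrt{\lambda^T_t}\sqrt{\int_t^T\EE_t[|1-\lambda^T_s/\lambda^T_s\circ\eps^+|^2]\,ds}\,dt}$ of the statement.

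The main obstacle is the middle term
\[
\mathcal{B}^T \;=\; \frac{1}{T}\int_0^T\!\!\int_t^T \E{\sqrt{\lambda^T_t\lambda^T_s}\,(1-R_{t,s})\,|\delta(\z^T D_{(s,0)}Z^T)|}\,ds\,dt,
\]
which carries the additional divergence factor. Here my plan is to condition on $\Fcal_s^N$ and bound the divergence via the conditional Itô isometry
\[
\EE_s\bigl[|\delta(\z^T D_{(s,0)}Z^T)|^2\bigr] \;=\; \tfrac{1}{T}\,\EE_s\!\left[\int_s^T(1-R_{s,u})^2\,du\right],
\]
together with the trivial $(1-R_{s,u})^2\leq 1$ that yields $\EE_s[|\delta|]\leq 1$. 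A further Cauchy--Schwarz / Jensen step on the $ds$-integral, arranged so that $\sqrt{\lambda^T_s}$ stays paired with $(1-R_{t,s})$ rather than with $|\delta|$, together with one more application of the algebraic estimate $(1-R_{t,s})^2\leq|1-\lambda^T_s/\lambda^T_s\circ\eps^+|^2$, allows $\mathcal{B}^T$ to be absorbed into the structural term plus a further $C/\sqrt{T}$ remainder. Carrying out this final decomposition without releasing uncontrolled $\int\lambda^T_s\,ds$-type factors is the principal technical difficulty.
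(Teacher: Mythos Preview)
Your treatment of the third moment via Lemma~\ref{lem: moment 3} and of the two ``non-divergence'' remainders $\mathcal{A}^T$, $\mathcal{C}^T$ via Cauchy--Schwarz in $s$ plus Jensen is correct and matches what the paper does for the analogous pieces. The algebraic pivot $(1-R)^2\le(1-R^2)^2$ is exactly right; the paper uses it (silently, under ``rewriting the terms'').

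The genuine problem is $\mathcal{B}^T$, and you have correctly identified why: after bounding $\EE_s[|\delta|]\le 1$ and applying Cauchy--Schwarz, you release a factor $\sqrt{\int_t^T\lambda^T_s\,ds}$, and in the nearly unstable regime $\E{\lambda^T_s}$ is of order $T$, so this factor is of order $T$ and cannot be absorbed into the structural term $\tfrac{C}{T}\E{\int_0^T\sqrt{\lambda^T_t}\sqrt{\int_t^T\EE_t[\cdot]\,ds}\,dt}$ nor into $C/\sqrt{T}$. Your closing sentence already diagnoses this; what is missing is how to circumvent it.

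The paper's resolution is \emph{not} to start from the fully expanded form of Theorem~\ref{thm: main}(3). Recall from the proof of that theorem that the three remainders $\mathcal{A}^T,\mathcal{B}^T,\mathcal{C}^T$ arise from a more primitive decomposition $B_1+B_2$, with
\[
B_1=\frac{1}{\sqrt{T}}\E{v'(F^T)\int_0^T\sqrt{\lambda^T_t}\,\delta\!\bigl(\z^T D_{(t,0)}Z^T\bigr)\,dt},\qquad
B_2=-\frac{2}{\sqrt{T}}\E{F^T\int_0^T\sqrt{\lambda^T_t}\,\delta\!\bigl(\z^T D_{(t,0)}Z^T\bigr)\,dt}.
\]
Your $\mathcal{B}^T$ is generated only when one applies integration by parts \emph{once more} to $B_1$ (to remove the test factor $v'(F^T)$), and that further IPP is precisely what produces the extra $\sqrt{\lambda^T_s}$. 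The paper simply skips this: it bounds $|v'|\le 1$ in $B_1$, conditions on $\Fcal^N_t$, and estimates $\EE_t[|\delta(\z^T D_{(t,0)}Z^T)|]\le\sqrt{\EE_t[|\delta|^2]}$ by the It\^o isometry. Since
\[
\EE_t\bigl[|\delta(\z^T D_{(t,0)}Z^T)|^2\bigr]=\int_t^T\EE_t\bigl[\lambda^T_s|D_{(t,0)}Z^T_s|^2\bigr]\,ds=\frac{1}{T}\int_t^T\EE_t\bigl[(1-R_{t,s})^2\bigr]\,ds,
\]
the $B_1$ contribution is already $\tfrac{1}{T}\E{\int_0^T\sqrt{\lambda^T_t}\sqrt{\int_t^T\EE_t[(1-R_{t,s})^2]\,ds}\,dt}$ with no stray intensity factor. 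The $B_2$ piece, computed via Lemma~\ref{lem: prod_div}, lands in exactly your $\mathcal{C}^T$ shape and is handled as you propose. Thus the fix is to step back one level in the derivation of Theorem~\ref{thm: main}(3) rather than to attempt a recovery from $\mathcal{B}^T$.
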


\begin{proof}
    Using the third part of Theorem \ref{thm: main}, we get
    \begin{align*}
        d_W\left(F^T,G \right) & \leq  \E{\left( F^T\right)^3} + \frac{1}{\sqrt{T}} \E{\int_{0}^T  \sqrt{\lambda_t^T}\left|\delta \left(\z^T D_{(t,0)}\left(  Z^T\right)\right)\right| dt }\\
        &\quad +\frac{2}{T}\E{\int_{0}^T  \sqrt{\lambda_t^T} \EE_t\left[\int_t^T \sqrt{\lambda^T_s} \left|D_{(t,0)}\left(Z^T_s \right)\right| ds \right] dt }.\\
    \end{align*}
    Besides, thanks to Lemma \ref{lem: moment 3}, we have
    $$\E{\left( F^T \right)^3} = \frac{1}{T^{3/2}} \int_0^T \E{\frac{1}{\sqrt{\lambda_t^T}}} dt \leq \frac{1/\mu}{\sqrt{T}}.$$
    Thus, we have that for any $v\in \mathrsfs{F}_W$,
    \begin{align*}
        \E { F^T v(F^T)-v'\left( F^T\right)} &\leq  \frac{C}{\sqrt{T}} + \frac{1}{\sqrt{T}}\E{\int_{0}^T  \sqrt{\lambda_t^T}\left|\delta \left(\z^T D_{(t,0)}\left(  Z^T\right)\right)\right| dt }\\
        &\quad +\frac{2}{T}\E{\int_{0}^T  \sqrt{\lambda_t^T} \EE_t\left[\int_t^T \sqrt{\lambda^T_s} \left| D_{(t,0)}\left(Z^T_s \right) \right| ds \right] dt }.
    \end{align*}
    For $G\sim \Ncal \left(0,1\right)$, we get
    \begin{align*}
        d_W \left( F^T , G\right) &\leq  \frac{C}{\sqrt{T}} + \frac{1}{\sqrt{T}}\E{\int_{0}^T  \sqrt{\lambda_t^T}\left|\delta \left(\z^T D_{(t,0)}\left(  Z^T\right)\right)\right| dt }\\
        &\quad +\frac{2}{T}\E{\int_{0}^T  \sqrt{\lambda_t^T} \EE_t\left[\int_t^T \sqrt{\lambda^T_s} \left| D_{(t,0)}\left(Z^T_s \right) \right| ds \right] dt }.
    \end{align*}
    Moreover, Jensen's inequality yields
    \begin{align*}
        d_W\left( F^T, G \right) & \leq  \frac{C}{\sqrt{T}} + \frac{1}{\sqrt{T}}\E{\int_{0}^T  \sqrt{\lambda_t^T} \sqrt{\EE_t\left[ \left|\delta \left(\z^T D_{(t,0)}\left(  Z^T\right)\right)\right|^2 \right]} dt }\\
        & \quad +\frac{2}{T}\E{\int_{0}^T  \sqrt{\lambda_t^T} \int_t^T \sqrt{\EE_t\left[\lambda^T_s \left|D_{(t,0)}\left(Z^T_s \right) \right|^2 \right]ds } dt }.\\
    \end{align*}
    Using Lemma \ref{lem: prod_div} and Jensen's inequality, we get
    \begin{align*}
        d_W\left( F^T, G \right) & \leq  \frac{C}{\sqrt{T}} + \frac{1}{\sqrt{T}}\E{\int_{0}^T  \sqrt{\lambda_t^T} \sqrt{ \int_t^T \EE_t\left[\lambda^T_s \left|D_{(t,0)}\left(Z^T_s \right) \right|^2 \right]ds }  dt }\\
        & \quad +\frac{2}{T}\E{\int_{0}^T  \sqrt{\lambda_t^T} \int_t^T \sqrt{\EE_t\left[\lambda^T_s \left|D_{(t,0)}\left(Z^T_s \right) \right|^2 \right]ds } dt }\\
        & \leq  \frac{C}{\sqrt{T}} + \frac{1}{\sqrt{T}}\E{\int_{0}^T  \sqrt{\lambda_t^T} \sqrt{ \int_t^T \EE_t\left[\lambda^T_s \left|D_{(t,0)}\left(Z^T_s \right) \right|^2 \right]ds }  dt }\\
        & \quad +\frac{2}{\sqrt{T}}\E{\int_{0}^T  \sqrt{\lambda_t^T} \sqrt{ \int_t^T \EE_t\left[\lambda^T_s \left|D_{(t,0)}\left(Z^T_s \right) \right|^2 \right]ds }  dt }.
    \end{align*}
    Rewriting the terms, we finally have
    \begin{align*}
        d_W\left( F^T, G \right) & \leq  \frac{C}{\sqrt{T}} + \frac{C}{T}\E{\int_{0}^T  \sqrt{\lambda_t^T} \sqrt{ \int_t^T \EE_t\left[\left| 1-\frac{\lambda^T_s}{\lambda^T_s \circ \eps_{(t,0)}^+} \right|^2 \right]ds }  dt }.
    \end{align*}
\end{proof}


\section{Technical Lemmata}
\label{sec:technical}

In this section, we detailed some useful lemmas for the previous proofs. We begin with the link between the Wasserstein distance and the third moment of our quantity $F^T$.

\begin{lemma}
\label{lem: ineq 2}
    Let $T>0$, $v\in \mathrsfs{F}_W$ and $f,g \in \Ccal$. Then, under Assumption \ref{assump: dans S},
    \begin{align*}
        \E { F^T v(F^T)-v'\left( F^T\right)} 
        & \leq \E{\left| 1-\frac{1}{T}\int_{0}^T  \frac{f^T(t)}{g^T(t)} dt \right|} + \E{\left( F^T\right)^3}-\frac{2}{T}\E{ F^T \int_{0}^T \frac{f^T(t)}{g^T(t)} dt}   \\
        &\quad + \E{\left(v'\left(F^T \right)-2F^T \right)\iint_{\RR^2_+}  \z^T_{(t,\theta)} Z^T \delta \left( D_{(t,\theta)}\left( \z^T Z^T\right)\right) dt d\theta}.
    \end{align*}
\end{lemma}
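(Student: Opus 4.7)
The plan is to start from exactly the same integration-by-parts plus Taylor decomposition that opens the proof of Theorem~\ref{thm: main}, part~1, namely
\begin{equation*}
\E{F^T v(F^T)} = A_1 + A_2, \qquad
A_1 = \E{ \iint_{\RR_+^2} \z^T_{(t,\theta)} Z^T_t \, v'(F^T)\, D_{(t,\theta)} F^T \, dt\, d\theta},
\end{equation*}
\begin{equation*}
A_2 = \tfrac{1}{2}\, \E{\iint_{\RR_+^2} \z^T_{(t,\theta)} Z^T_t \, v''(\hat F^T)\, |D_{(t,\theta)} F^T|^2 \, dt\, d\theta},
\end{equation*}
with $\hat F^T$ between $F^T$ and $F^T\circ\eps^+_{(t,\theta)}$, and then to bound $A_1-\E{v'(F^T)}$ and $A_2$ separately. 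The new idea, compared to part~1 of Theorem~\ref{thm: main}, is to treat the quadratic remainder $A_2$ with a \emph{second} integration by parts that transforms it into a cubic moment of $F^T$, thereby eliminating any residual dependence on $v''$.

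For $A_1$, I would apply the Heisenberg identity (Proposition~\ref{prop: heisenberg}) to write $D_{(t,\theta)}F^T = \z^T_{(t,\theta)} Z^T_t + \delta(D_{(t,\theta)}(\z^T Z^T))$; using $(\z^T_{(t,\theta)})^2 = \z^T_{(t,\theta)}$ and $\iint \z^T_{(t,\theta)} (Z^T_t)^2 dt\, d\theta = \tfrac{1}{T}\int_0^T \tfrac{f^T(t)}{g^T(t)} dt$, this splits $A_1$ into $\E{v'(F^T)\cdot \tfrac{1}{T}\int_0^T \tfrac{f^T(t)}{g^T(t)} dt}$ plus $\E{v'(F^T) \iint \z^T_{(t,\theta)} Z^T_t\, \delta(D_{(t,\theta)}(\z^T Z^T))\, dt\, d\theta}$. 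Subtracting $\E{v'(F^T)}$ and using the pointwise bound $|v'|\leq 1$ gives the first term $\E{|1 - \tfrac{1}{T}\int_0^T \tfrac{f^T(t)}{g^T(t)} dt|}$ in the lemma, plus the $v'(F^T)$ contribution to the final $\delta$-integral term.

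The key step is the treatment of $A_2$. Using $\|v''\|_\infty \leq 2$ together with the pointwise positivity of $\z^T_{(t,\theta)}$ and $Z^T_t$, we get the crude but crucial bound $A_2 \leq \E{\iint \z^T_{(t,\theta)} Z^T_t\, |D_{(t,\theta)} F^T|^2\, dt\, d\theta}$. The trick is to recognise this Malliavin-geometric quantity as a cubic moment: applying Proposition~\ref{prop:IPP} to $\E{(F^T)^3} = \E{\delta(\z^T Z^T)\, (F^T)^2}$ and using the shift product rule $D_{(t,\theta)}((F^T)^2) = 2 F^T D_{(t,\theta)} F^T + (D_{(t,\theta)} F^T)^2$ produces
\begin{equation*}
\E{\iint \z^T_{(t,\theta)} Z^T_t |D_{(t,\theta)} F^T|^2 \, dt\, d\theta}
= \E{(F^T)^3} - 2\, \E{F^T \iint \z^T_{(t,\theta)} Z^T_t\, D_{(t,\theta)} F^T\, dt\, d\theta}.
\end{equation*}
Applying Heisenberg again to the remaining $D_{(t,\theta)} F^T$ converts the cross term into $\tfrac{1}{T}\E{F^T \int_0^T \tfrac{f^T(t)}{g^T(t)} dt} + \E{F^T \iint \z^T_{(t,\theta)} Z^T_t\, \delta(D_{(t,\theta)}(\z^T Z^T))\, dt\, d\theta}$.

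Summing the bound on $A_1 - \E{v'(F^T)}$ with the bound on $A_2$, the two $\delta$-integral contributions (weighted by $v'(F^T)$ from $A_1$ and by $-2F^T$ from $A_2$) merge into the single term $\E{(v'(F^T)-2F^T) \iint \z^T_{(t,\theta)} Z^T\, \delta(D_{(t,\theta)}(\z^T Z^T))\, dt\, d\theta}$ that appears in the statement, while the remaining pieces reproduce exactly the first three terms on the right-hand side. The main conceptual obstacle is precisely this second integration by parts: it replaces the $v''$-weighted remainder (which would obstruct an optimal rate when combined with Lemma~\ref{lem: moment 3}) by an expression cleanly controlled by $\E{(F^T)^3}$, which is the point that makes Lemma~\ref{lem: ineq 2} the right starting point for Theorem~\ref{thm: main}, part~3.
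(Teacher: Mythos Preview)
Your proposal is correct and follows essentially the same route as the paper: integration by parts plus Taylor to split into $A_1+A_2$, Heisenberg on $D_{(t,\theta)}F^T$ in $A_1$, the bound $\|v''\|_\infty\le 2$ on $A_2$, and then the identification of $\E{\iint \z^T_{(t,\theta)} Z^T_t |D_{(t,\theta)}F^T|^2\,dt\,d\theta}$ with $\E{(F^T)^3}$ minus the cross term via a second integration by parts and the product rule for $D_{(t,\theta)}((F^T)^2)$. The only cosmetic difference is that the paper writes the pointwise identity $|D_{(t,\theta)}F^T|^2 = D_{(t,\theta)}(|F^T|^2) - 2F^T D_{(t,\theta)}F^T$ first and then recognises the integrated first piece as $\E{(F^T)^3}$, whereas you start from $\E{(F^T)^3}$ and solve for the quadratic term; this is the same computation read in the opposite direction.
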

\begin{proof}
    Using integration by part, one can get
    $$\E { F^T v(F^T)} = \E{\delta\left(\z^{T} Z^T\right)v(F^T)} = \E { \iint_{\RR_+^2} \z_{(t,\theta)}^{T} Z_t^T D_{(t,\theta)}\left(v(F^T) \right) dtd\theta}.$$
    Besides, Taylor expansion gives
    $$ D_{(t,\theta)}(v(F^T)) = v'(F^T) D_{(t,\theta)}(F^T) +  \vert D_{(t,\theta)}(F^T) \vert^2 \int_{0}^{1} (1-x)v''\left(F^T + x D_{(t,\theta)}\left(F^T \right) \right) dx .$$
    We have:

    \begin{align*}
        \E { F^T v(F^T)} &= \E{\iint_{\RR^2_+}  \z^T_{(t,\theta)} Z_t^T v'\left(F^T \right) D_{(t,\theta)}\left( F^T\right) dt d\theta}\\
        & \quad + \E{ \iint_{\RR_+^2} \z^T_{(t,\theta)} Z^T \left|D_{(t,\theta)}\left(F^T \right)\right|^2 \left(\int_0^1 v''\left(F^T + u D_{(t,\theta)}\left(F^T \right) \right)(1-u) du \right)dt d\theta}.
    \end{align*}
Moreover, Proposition \ref{prop: heisenberg} for $D_{(t,\theta)}\left( F^T\right)$ yields:
\begin{align*}
        \E { F^T v(F^T)} &= \E{\iint_{\RR^2_+}  \z^T_{(t,\theta)} \left|Z_t^T \right|^2 v'\left(F^T \right) dt d\theta} + \E{\iint_{\RR^2_+} v'\left(F^T \right) \z^T_{(t,\theta)} Z^T \delta \left( D_{(t,\theta)}\left( \z^T Z^T\right)\right) dt d\theta}\\
        & \quad + \E{ \iint_{\RR_+^2} \z^T_{(t,\theta)} Z^T \left|D_{(t,\theta)}\left(F^T \right)\right|^2 \left(\int_0^1 v''\left(F^T + u D_{(t,\theta)}\left(F^T \right) \right)(1-u) du \right)dt d\theta}.
    \end{align*}
Therefore, since $\Vert v'' \Vert_{\infty} \leq 2$,
\begin{align*}
        &\E { F^T v(F^T)-v'\left( F^T\right)} \\
        &= \E{v'\left(F^T \right)\left( 1-\frac{1}{T}\int_{0}^T  \frac{f^T(t)}{g^T(t)} dt \right)} + \E{\iint_{\RR^2_+} v'\left(F^T \right) \z^T_{(t,\theta)} Z^T \delta \left( D_{(t,\theta)}\left( \z^T Z^T\right)\right) dt d\theta}\\
        & \quad + \E{ \iint_{\RR_+^2} \z^T_{(t,\theta)} Z^T \left|D_{(t,\theta)}\left(F^T \right)\right|^2 \left(\int_0^1 v''\left(F^T + u D_{(t,\theta)}\left(F^T \right) \right)(1-u) du \right)dt d\theta} \\
        & \leq \E{v'\left(F^T \right)\left( 1-\frac{1}{T}\int_{0}^T  \frac{f^T(t)}{g^T(t)} dt \right)} + \E{\iint_{\RR^2_+} v'\left(F^T \right) \z^T_{(t,\theta)} Z^T \delta \left( D_{(t,\theta)}\left( \z^T Z^T\right)\right) dt d\theta}\\
        & \quad + \E{ \iint_{\RR_+^2} \z^T_{(t,\theta)} Z^T \left|D_{(t,\theta)}\left(F^T \right)\right|^2 dt d\theta}. \\
    \end{align*}
Now, notice that for $(t,\theta)\in \RR_+^2$,
$$\left|D_{(t,\theta)}\left(F^T \right)\right|^2 = D_{(t,\theta)}\left(\left|F^T \right|^2\right) - 2 F^TD_{(t,\theta)}\left(F^T \right). $$
This last equality gives
\begin{align*}
        &\E { F^T v(F^T)-v'\left( F^T\right)}  \\
        & \leq \E{v'\left(F^T \right)\left( 1-\frac{1}{T}\int_{0}^T  \frac{f^T(t)}{g^T(t)} dt \right)} + \E{\iint_{\RR^2_+} v'\left(F^T \right) \z^T_{(t,\theta)} Z^T \delta \left( D_{(t,\theta)}\left( \z^T Z^T\right)\right) dt d\theta}\\
        & \quad + \E{ \iint_{\RR_+^2} \z^T_{(t,\theta)} Z^T D_{(t,\theta)}\left(\left|F^T \right|^2\right) dt d\theta} -2\E{ \iint_{\RR_+^2} \z^T_{(t,\theta)} Z^T F^T D_{(t,\theta)}\left(F^T \right) dt d\theta}.
    \end{align*}
Besides, we have that
$$\E{\left( F^T \right)^3} = \E{F^T \left|F^T \right|^2} = \E{\delta\left( \z^T Z^T\right)\left|F^T \right|^2}= \E{ \iint_{\RR_+^2} \z^T_{(t,\theta)} Z^T D_{(t,\theta)}\left(\left|F^T \right|^2\right) dt d\theta}.$$
And so
\begin{align*}
\E { F^T v(F^T)-v'\left( F^T\right)} 
        & \leq \E{v'\left(F^T \right)\left( 1-\frac{1}{T}\int_{0}^T  \frac{f^T(t)}{g^T(t)} dt \right)} + \E{\left( F^T\right)^3} \\
        & \quad +  \E{\iint_{\RR^2_+} v'\left(F^T \right) \z^T_{(t,\theta)} Z^T \delta \left( D_{(t,\theta)}\left( \z^T Z^T\right)\right) dt d\theta} \\
        &\quad -2\E{ \iint_{\RR_+^2} \z^T_{(t,\theta)} Z^T F^T D_{(t,\theta)}\left(F^T \right) dt d\theta}
\end{align*}
Now, we make use of the Proposition \ref{prop: heisenberg} to get
\begin{align*}
    & \E{F^T v \left( F^T\right)-v'\left(F^T \right)} \\
    & \leq \E{v'\left(F^T \right)\left( 1-\frac{1}{T}\int_{0}^T  \frac{f^T(t)}{g^T(t)} dt \right)} + \E{\left( F^T\right)^3}\\
    &\quad + \E{\iint_{\RR^2_+} v'\left(F^T \right) \z^T_{(t,\theta)} Z^T \delta \left( D_{(t,\theta)}\left( \z^T Z^T\right)\right) dt d\theta}\\
    & \quad - 2\E{ \iint_{\RR_+^2} \z^T_{(t,\theta)} \left|Z^T\right|^2 F^T dt d\theta}- 2\E{ \iint_{\RR_+^2} \z^T_{(t,\theta)} Z^T F^T \delta\left(D_{(t,\theta)}\left(\z^T Z^T \right) \right)dt d\theta}.
\end{align*}
Thus, by ordering the terms, we finally have
\begin{align*}
    & \E{F^T v \left( F^T\right)-v'\left(F^T \right)}\\
    &\leq \E{v'\left(F^T \right)\left( 1-\frac{1}{T}\int_{0}^T  \frac{f^T(t)}{g^T(t)} dt \right)} + \E{\left( F^T\right)^3}-2\E{ \iint_{\RR_+^2} \z^T_{(t,\theta)} \left|Z^T\right|^2 F^T  dt d\theta} \\
    &\quad + \E{\left(v'\left(F^T \right)-2F^T \right)\iint_{\RR^2_+}  \z^T_{(t,\theta)} Z^T \delta \left( D_{(t,\theta)}\left( \z^T Z^T\right)\right) dt d\theta}\\
    & \leq \E{\left| 1-\frac{1}{T}\int_{0}^T  \frac{f^T(t)}{g^T(t)} dt \right|} + \E{\left( F^T\right)^3}-\frac{2}{T}\E{ F^T \iint_{0}^T \frac{f^T(t)}{g^T(t)} dt}   \\
    &\quad + \E{\left(v'\left(F^T \right)-2F^T \right)\iint_{\RR^2_+}  \z^T_{(t,\theta)} Z^T \delta \left( D_{(t,\theta)}\left( \z^T Z^T\right)\right) dt d\theta}.
\end{align*}
\end{proof}

The two next lemmas are about the divergence operator: Lemma \ref{lem: magique} simplifies the computations and Lemma \ref{lem: prod_div} transforms a product of two divergences into an integral. In particular, Lemma  \ref{lem: magique} refines Lemma 4.1 of \cite{khabou_normal_2024} which allow us to simplify some terms.

\begin{lemma}
\label{lem: magique}
    Let $F$ be in $\mathbb L^2 (\Omega , \mathbb F_{\infty}^N, \mathbb P)$. We set $\z_{(t,\theta)} := \1_{\theta \leq f(t)}$ where $(t,\theta) \in \RR^2_+$ and $f$ a predictable process.
    Suppose that for any $(s,t,\theta, \rho) \in \RR^4_+$, 
    $$D_{(t,\theta)} f(s) \geq 0 \, \text{,} \quad \1_{\rho \geq f(s)}D_{(s,\rho)} F = 0 \quad \text{and} \quad u D_{(t,\theta)}\left( \z\right)  \in \Scal.$$
    Then, 
    $$\E{ \z_{(t,\theta)} F \delta \left( u D_{(t,\theta)}\left( \z\right)  \right) } =0.$$
\end{lemma}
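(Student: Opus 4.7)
The plan is to reduce the expectation to a double integral via the integration by parts formula and then check, term by term, that every surviving integrand vanishes pointwise. Fix $(t,\theta)$. Writing $Z_{(s,\rho)} := u_{(s,\rho)} D_{(t,\theta)}(\z_{(s,\rho)})$, the assumption $uD_{(t,\theta)}(\z)\in\Scal$ lets us apply Proposition \ref{prop:IPP} to $G := \z_{(t,\theta)} F$ and $Z$:
\begin{equation*}
\E{\z_{(t,\theta)} F\, \delta(u D_{(t,\theta)}(\z))}
= \E{\iint_{\RR\times\RR_+} u_{(s,\rho)}\, D_{(t,\theta)}(\z_{(s,\rho)})\, D_{(s,\rho)}(\z_{(t,\theta)} F)\, ds\, d\rho}.
\end{equation*}

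Next I would expand the inner Malliavin derivative by the Poisson Leibniz rule,
\begin{equation*}
D_{(s,\rho)}(\z_{(t,\theta)} F)
= \z_{(t,\theta)} D_{(s,\rho)}F + F\, D_{(s,\rho)}\z_{(t,\theta)} + D_{(s,\rho)}\z_{(t,\theta)}\, D_{(s,\rho)}F,
\end{equation*}
which splits the integrand into three pieces. The key pointwise observation is that, under the hypothesis $D_{(t,\theta)} f(s) \geq 0$,
\begin{equation*}
D_{(t,\theta)}(\z_{(s,\rho)}) = \ind{\rho \leq f(s)\circ\eps_{(t,\theta)}^+} - \ind{\rho \leq f(s)} = \ind{f(s) < \rho \leq f(s)\circ\eps_{(t,\theta)}^+},
\end{equation*}
so this factor is supported on $\{\rho > f(s)\}$. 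On that set the hypothesis $\ind{\rho \geq f(s)} D_{(s,\rho)}F = 0$ kills $D_{(s,\rho)}F$, so the first and third terms in the Leibniz expansion contribute zero to the integral.

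The only surviving piece is $u_{(s,\rho)}\, D_{(t,\theta)}(\z_{(s,\rho)})\, F\, D_{(s,\rho)}\z_{(t,\theta)}$, and here I would invoke predictability of $f$. By Lemma \ref{lemma:mesur}, $f(s)$ is $\Fcal_{s^-}^N$-measurable, so $f(s)\circ\eps_{(t,\theta)}^+ = f(s)$ whenever $t \geq s$; consequently $D_{(t,\theta)}(\z_{(s,\rho)}) \neq 0$ forces $t < s$. By the symmetric argument, $D_{(s,\rho)}\z_{(t,\theta)} \neq 0$ forces $s < t$. These two conditions are incompatible outside the measure-zero set $\{s = t\}$, so the product of the two Malliavin derivatives vanishes $ds\,d\rho$-a.e., and the remaining term integrates to zero. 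The only subtlety worth watching is making sure the Leibniz expansion is the correct one on the Poisson space and handling the predictability argument cleanly; once these are in place the result follows immediately.
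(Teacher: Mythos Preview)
Your proof is correct and follows essentially the same route as the paper: integration by parts, the Poisson Leibniz expansion of $D_{(s,\rho)}(\z_{(t,\theta)}F)$, the observation that $D_{(t,\theta)}(\z_{(s,\rho)})$ is supported on $\{\rho>f(s)\}$ so the hypothesis on $F$ applies, and the predictability argument forcing $D_{(t,\theta)}(\z_{(s,\rho)})\,D_{(s,\rho)}(\z_{(t,\theta)})=0$. The only cosmetic difference is in how the three Leibniz terms are grouped: the paper combines the two terms containing $D_{(s,\rho)}\z_{(t,\theta)}$ into $D_{(s,\rho)}(\z_{(t,\theta)})\,F\circ\eps^+_{(s,\rho)}$ and kills that with the product-vanishes argument, then handles the remaining $\z_{(t,\theta)}D_{(s,\rho)}F$ term with the hypothesis, whereas you kill the two terms containing $D_{(s,\rho)}F$ first and handle the $F\,D_{(s,\rho)}\z_{(t,\theta)}$ term last; the ingredients are identical.
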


\begin{proof}
    Using the integration by part formula,
    $$\E{ \z_{(t,\theta)} F \delta \left( D_{(t,\theta)}\left( \z\right) u \right) } = \E{ \int_t^{+\infty}\int_{\RR_+} u_s D_{(t,\theta)}\left( \z_{(s,\rho)}\right) D_{(s,\rho)}\left( \z_{(t,\theta)} F \right)d\rho ds}.$$
    Moreover, the properties of Malliavin's derivative ensures that
    \begin{align*}
      D_{(t,\theta)}\left( \z_{(t,\theta)} F\right) &= \z_{(t,\theta)} D_{(t,\theta)}\left( F\right)+F D_{(t,\theta)}\left( \z_{(t,\theta)} \right) + D_{(t,\theta)}\left( F\right)D_{(t,\theta)}\left( \z_{(t,\theta)} \right).
    \end{align*}
    Hence,
    \begin{align*}
        \E{ \z_{(t,\theta)} F \delta \left( D_{(t,\theta)}\left( \z\right) u \right) } &= \E{ \int_t^{+\infty}\int_{\RR_+} u_s D_{(t,\theta)}\left( \z_{(s,\rho)}\right) D_{(s,\rho)}\left( \z_{(t,\theta)} \right) F\circ\eps^+_{(s,\rho)}d\rho ds}\\
        &+ \E{ \int_t^{+\infty}\int_{\RR_+} u_s D_{(t,\theta)}\left( \z_{(s,\rho)}\right) D_{(s,\rho)}\left(F \right)\z_{(t,\theta)} d\rho ds}\\
        &= \E{ \int_t^{+\infty}\int_{\RR_+} u_s D_{(t,\theta)}\left( \z_{(s,\rho)}\right) D_{(s,\rho)}\left(F \right)\z_{(t,\theta)} d\rho ds}.
    \end{align*}
    Besides, by noticing that $D_{(t,\theta)}\left( \z_{(s,\rho)}\right) D_{(s,\rho)}\left( \z_{(t,\theta)} \right)=0$, we get
        $$\E{ \z_{(t,\theta)} F \delta \left( D_{(t,\theta)}\left( \z\right) u \right) } = \E{ \int_t^{+\infty}\int_{\RR_+} u_s D_{(t,\theta)}\left( \z_{(s,\rho)}\right) D_{(s,\rho)}\left(F \right)\z_{(t,\theta)} d\rho ds}.$$
    Now, since $D_{(t,\theta)} f(s) \geq 0$, we have that
    $$D_{(t,\theta)}\left( \z_{(s,\rho)}\right) = \1_{f(s) < \rho \leq f(s)\circ \eps^+_{(t,\theta)}} .$$
    Therefore, the second assumption ensures that
        $$\E{ \z_{(t,\theta)} F \delta \left( D_{(t,\theta)}\left( \z\right) u \right) } = \E{ \int_t^{+\infty}\int_{\RR_+} u_s \1_{f(s) < \rho \leq f(s)\circ \eps^+_{(t,\theta)}} \z_{(t,\theta)} D_{(s,\rho)}\left( F \right)d\rho ds} =0.$$
\end{proof}

\begin{lemma}
\label{lem: prod_div}
    Let $u$ and $\hat{u}$ be in $\Scal$. We set $\1_{t}:s \mapsto \1_{t\leq s}$ where $t \in \RR$. Then,
    $$\E{\delta\left( u \right)\delta\left( \hat{u} \1_{t} \right)} = \int_{t}^{+\infty}\int_{\RR_+} \E{u_{(s,\theta)} \hat{u}_{(s,\theta)}} ds d\theta .$$
\end{lemma}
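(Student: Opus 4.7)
The plan is to establish this isometry-type identity via two successive applications of the integration by parts formula (Proposition \ref{prop:IPP}) together with the Heisenberg commutation relation (Proposition \ref{prop: heisenberg}). First I would apply Proposition \ref{prop:IPP} with the scalar $F = \delta(u) \in \mathbb L^2(\Omega, \Fcal_\infty^N, \PP)$ and the process $\hat u \, \mathbf 1_t \in \Scal$, obtaining
$$ \E{\delta(u)\, \delta(\hat u \mathbf 1_t)} = \E{\int_{t}^{+\infty} \int_{\RR_+} \hat u_{(s,\theta)}\, D_{(s,\theta)} \delta(u) \, ds \, d\theta}. $$
Then, by Proposition \ref{prop: heisenberg}, $D_{(s,\theta)} \delta(u) = u_{(s,\theta)} + \delta\bigl(D_{(s,\theta)} u\bigr)$, so the right-hand side splits as
$$ \int_{t}^{+\infty} \int_{\RR_+} \E{\hat u_{(s,\theta)} u_{(s,\theta)}} \, ds \, d\theta \;+\; R_t, \quad R_t := \E{\int_{t}^{+\infty}\int_{\RR_+} \hat u_{(s,\theta)} \, \delta\bigl(D_{(s,\theta)} u\bigr) \, ds \, d\theta}. $$
The first term is already the target of the lemma, so the whole game reduces to proving $R_t = 0$.

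To show $R_t = 0$, I would, for each fixed $(s,\theta)$, apply the integration by parts formula a second time, now with the scalar random variable $\hat u_{(s,\theta)}$ against the process $r \mapsto D_{(s,\theta)} u_{(r,\rho)}$. This yields
$$ \E{\hat u_{(s,\theta)} \, \delta\bigl(D_{(s,\theta)} u\bigr)} = \E{\iint_{\RR \times \RR_+} D_{(s,\theta)} u_{(r,\rho)} \; D_{(r,\rho)} \hat u_{(s,\theta)} \, dr \, d\rho}. $$
The key observation is then that the two Malliavin derivatives have disjoint supports: by predictability of $u$, the process $u_{(r,\rho)}$ is $\Fcal_{r^-}^N$-measurable, so Lemma \ref{lemma:mesur} forces $D_{(s,\theta)} u_{(r,\rho)} = 0$ whenever $r \leq s$; symmetrically, by predictability of $\hat u$, one gets $D_{(r,\rho)} \hat u_{(s,\theta)} = 0$ whenever $r \geq s$. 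The product therefore vanishes almost everywhere, giving $R_t = 0$ and the desired identity after an application of Fubini.

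The only real obstacle is the bookkeeping on integrability needed to legitimately invoke Proposition \ref{prop:IPP} twice and to exchange $\E{\cdot}$ and the iterated integrals: one must know that $\delta(u) \in \mathbb L^2$ (which follows from $u \in \Scal$ via the $\mathbb L^2$ isometry of the compensated Poisson integral for predictable integrands) and that $D_{(s,\theta)} u$ belongs to $\Scal$ for almost every $(s,\theta)$, so that the second divergence and its integration by parts are meaningful. These points are standard under the hypothesis $u, \hat u \in \Scal$, and the argument is otherwise entirely formal, with the predictability-based support disjointness of the two derivatives doing all the conceptual work.
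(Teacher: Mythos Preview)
Your argument is correct and follows the same skeleton as the paper: apply the integration by parts formula with $F=\delta(u)$ against $\hat u\,\1_t$, then expand $D_{(s,\theta)}\delta(u)$ via the Heisenberg relation to isolate the target term plus a remainder $R_t$. The only difference lies in how $R_t$ is killed. You apply a second integration by parts and argue that the product $D_{(s,\theta)}u_{(r,\rho)}\cdot D_{(r,\rho)}\hat u_{(s,\theta)}$ vanishes by disjoint supports in $r$ (coming from predictability of both $u$ and $\hat u$). The paper instead observes directly that $\hat u_{(s,\rho)}$ is $\Fcal_{s^-}^N$-measurable while $\delta(D_{(s,\rho)}u)$, being a compensated integral over $(s,\infty)$, satisfies $\EE_s[\delta(D_{(s,\rho)}u)]=0$; pulling $\hat u_{(s,\rho)}$ out of the conditional expectation gives $R_t=0$ in one line. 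Your route is slightly longer but equally valid, and it has the minor advantage of making the role of predictability fully explicit rather than hidden inside a martingale-type conditioning step.
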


\begin{proof}
    Let $t\in \RR$. Then, Proposition \ref{prop:IPP} gives
    $$\E{\delta\left( u \right)\delta\left( \hat{u} \1_{t} \right)} = \int_t^{+\infty} \int_{\RR_+} \E{D_{(s,\rho)}\left(\delta (u) \right) \hat{u}_{(s,\rho)}} dsd\rho.$$
    Moreover, using Proposition \ref{prop: heisenberg}, we have
    \begin{align*}
      \E{\delta\left( u \right)\delta\left( \hat{u} \1_{t} \right)} &= \int_t^{+\infty} \int_{\RR_+} \E{u_{(s,\rho)} \hat{u}_{(s,\rho)}} dsd\rho + \int_t^{+\infty} \int_{\RR_+} \E{\delta\left( D_{(s,\rho)}(u) \right) \hat{u}_{(s,\rho)}} dsd\rho \\
      &= \int_t^{+\infty} \int_{\RR_+} \E{u_{(s,\rho)} \hat{u}_{(s,\rho)}} dsd\rho.
    \end{align*}
    Here, we have use the fact that $\EE_s\left[\delta\left( D_{(s,\rho)}(u) \right) \right]=0$.
\end{proof}

\begin{lemma}
\label{lem: maj_Dlambda12}
    Let $f = \left( f^T(t)\right)_{t\in [0,T]} \in \Ccal$ such that $D_{(t,0)}\left( f^T(s)\right) \geq 0$. Then, for $0\leq t \leq s \leq T$, 
    $$\left\vert D_{(t,0)}\left( \frac{1}{\sqrt{f^T(s)}} \right) \right\vert \leq \frac{1}{2} \frac{D_{(t,0)}\left( f^T(s) \right)}{ \left( f^T(s)\right)^{3/2}}.$$
\end{lemma}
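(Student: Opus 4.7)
The proof plan is to unpack the Malliavin derivative directly using its definition as a difference operator, reducing the statement to an elementary inequality about the function $x \mapsto 1/\sqrt{x}$ on $(0,+\infty)$.

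First I would write, by definition of $D_{(t,0)}$,
\[
D_{(t,0)}\!\left(\frac{1}{\sqrt{f^T(s)}}\right) = \frac{1}{\sqrt{f^T(s)\circ\eps^+_{(t,0)}}} - \frac{1}{\sqrt{f^T(s)}}.
\]
Set $a:=f^T(s)>0$ and $b:=f^T(s)\circ\eps^+_{(t,0)}>0$. The hypothesis $D_{(t,0)}(f^T(s))\geq 0$ is exactly $b\geq a$, so $1/\sqrt{b}-1/\sqrt{a}\leq 0$ and the absolute value equals $1/\sqrt{a}-1/\sqrt{b}$.

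Next I would use the algebraic identity
\[
\frac{1}{\sqrt{a}}-\frac{1}{\sqrt{b}} = \frac{\sqrt{b}-\sqrt{a}}{\sqrt{ab}} = \frac{b-a}{\sqrt{ab}\,(\sqrt{a}+\sqrt{b})}.
\]
Because $b\geq a>0$, we have $\sqrt{ab}\geq a$ and $\sqrt{a}+\sqrt{b}\geq 2\sqrt{a}$, so $\sqrt{ab}(\sqrt{a}+\sqrt{b})\geq 2a^{3/2}$. Substituting back,
\[
\left|D_{(t,0)}\!\left(\frac{1}{\sqrt{f^T(s)}}\right)\right| \leq \frac{b-a}{2a^{3/2}} = \frac12\,\frac{D_{(t,0)}(f^T(s))}{(f^T(s))^{3/2}},
\]
which is the desired inequality. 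An equivalent route would be the mean value theorem applied to $x\mapsto x^{-1/2}$, whose derivative in absolute value is $\tfrac12 x^{-3/2}$ and is decreasing, so the supremum of $|(\cdot)^{-1/2}{}'|$ on $[a,b]$ is $\tfrac12 a^{-3/2}$; this yields the bound at once.

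There is essentially no obstacle here: the only subtlety is that the sign hypothesis $D_{(t,0)}(f^T(s))\geq 0$ is what allows us to bound the denominator from below by $a^{3/2}$ rather than by $b^{3/2}$ (the latter would give a useless bound since the whole point is to control the quantity by something involving the unshifted $f^T(s)$). Once one observes $b\geq a$, the estimate is immediate.
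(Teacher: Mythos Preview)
Your proof is correct. The paper argues instead via a second-order Taylor expansion of $x\mapsto x^{-1/2}$ at the point $f^T(s)$: writing
\[
-D_{(t,0)}\bigl(f^T(s)^{-1/2}\bigr)=\tfrac12\,f^T(s)^{-3/2}\,D_{(t,0)}f^T(s)-\tfrac38\,\bar f^T(s)^{-5/2}\,\bigl|D_{(t,0)}f^T(s)\bigr|^2
\]
for some $\bar f^T(s)\in[f^T(s),f^T(s)\circ\eps_{(t,0)}^+]$, and then dropping the (nonpositive) remainder. Your rationalization identity $\tfrac{1}{\sqrt a}-\tfrac{1}{\sqrt b}=\tfrac{b-a}{\sqrt{ab}(\sqrt a+\sqrt b)}$ followed by the two monotonicity bounds $\sqrt{ab}\geq a$ and $\sqrt a+\sqrt b\geq 2\sqrt a$ is a more elementary route that avoids any appeal to Taylor's theorem with remainder; the mean-value-theorem variant you mention at the end is essentially the first-order version of the paper's argument. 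All three lead to the same inequality with the same constant $\tfrac12$.
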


\begin{proof}
Note that $D_{(t,0)} \left( f^T(s)^{-1/2}\right) \leq 0$.\\
Letting $\bar f^T(s)$ a random element in $[f^T(s),f^T(s) \circ \eps_{(t,0)}^+]$ we have
\begin{align*}
|D_{(t,0)} (f^T(s)^{-1/2})|&=-D_{(t,0)} \left(f^T(s)^{-1/2}\right) \\
&= -\left(\frac{1}{\sqrt{f^T(s) \circ \eps_{t}^+}} - \frac{1}{\sqrt{f^T(s)}}\right) \\
&= \frac12 (f^T(s))^{-3/2} D_{(t,0)} f^T(s) - \frac38 \bar f^T(s)^{-5/2} |D_{(t,0)} f^T(s)|^2 \\
& \leq \frac12 f^T(s)^{-3/2} D_{(t,0)} f^T(s).
\end{align*}
\end{proof}

\begin{lemma}
\label{lemma: maj_esp_cond_delta}
    Take $f=g \in \Ccal$ such that Assumption \ref{assump: dans S} and \ref{assump: initiale} hold. Then, for any $s$ in $[0,T]$,
    $$\EE_s \left[ \vert \delta(\z^T D_{(s,0)}(Z^T) \vert  \right] \leq  \frac{2}{\sqrt{T}} \int_{s}^T \EE_s \left[  \frac{\left| D_{(s,0)}\left(f^T(u) \right)\right|}{\sqrt{f^T(u)}}\right] du.$$
\end{lemma}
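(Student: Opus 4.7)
The plan is to attack the divergence directly from its definition, rather than invoking an isometry. Writing $U_{(u,\theta)} := \z^T_{(u,\theta)} D_{(s,0)}(Z^T_u)$, I first note that for $u<s$ the random variable $Z^T_u$ is $\Fcal^N_{u^-}$-measurable so Lemma~\ref{lemma:mesur} yields $Z^T_u \circ \eps_{(s,0)}^+ = Z^T_u$, hence $D_{(s,0)}(Z^T_u)=0$. Consequently the divergence reduces to an integral over $[s,T]\times\RR_+$:
$$\delta\bigl(\z^T D_{(s,0)}(Z^T)\bigr) = \iint_{[s,T]\times\RR_+} U_{(u,\theta)}\,N(du,d\theta) - \iint_{[s,T]\times\RR_+} U_{(u,\theta)}\,du\,d\theta.$$

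Next, I apply the triangle inequality to split $|\delta(U)|$ into the two positive integrals, and take $\EE_s[\cdot]$. Because the integrand $U$ is $\FF^N$-predictable (this is the tacit extension of Assumption~\ref{assump: dans S} also invoked in Theorem~\ref{thm: main}), the compensation formula gives
$$\EE_s\!\left[\iint_{[s,T]\times\RR_+} |U_{(u,\theta)}|\,N(du,d\theta)\right] = \EE_s\!\left[\iint_{[s,T]\times\RR_+} |U_{(u,\theta)}|\,du\,d\theta\right],$$
so a factor $2$ appears. Then integrating out $\theta$ using $\int_{\RR_+} \z^T_{(u,\theta)} d\theta = f^T(u)$ produces
$$\EE_s\bigl[|\delta(\z^T D_{(s,0)}(Z^T))|\bigr] \leq 2\,\EE_s\!\left[\int_s^T f^T(u)\,\bigl|D_{(s,0)}(Z^T_u)\bigr|\,du\right].$$

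Finally, I use that $f=g$ implies $Z^T_u = 1/\sqrt{T\,f^T(u)}$, so $D_{(s,0)}(Z^T_u) = T^{-1/2} D_{(s,0)}(f^T(u)^{-1/2})$. Invoking Lemma~\ref{lem: maj_Dlambda12} (which applies in the Hawkes/monotone setting where this lemma is used, since there $D_{(s,0)}(f^T(u))\geq 0$), one gets $|D_{(s,0)}(f^T(u)^{-1/2})| \leq \tfrac{1}{2}\,|D_{(s,0)} f^T(u)|/f^T(u)^{3/2}$, whence
$$f^T(u)\,\bigl|D_{(s,0)}(Z^T_u)\bigr| \leq \frac{1}{\sqrt{T}}\cdot\frac{|D_{(s,0)} f^T(u)|}{\sqrt{f^T(u)}},$$
absorbing the $\tfrac{1}{2}$ and reproducing (in fact improving) the stated bound. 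The only delicate point is the predictability of $\z^T D_{(s,0)}(Z^T)$ which legitimises the use of the compensator, and the sign condition $D_{(s,0)}f^T \geq 0$ required by Lemma~\ref{lem: maj_Dlambda12}; both are verified in the applications of this lemma (non-linear Hawkes with non-decreasing $h$ and non-negative $\phi$).
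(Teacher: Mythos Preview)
Your proof is correct and follows essentially the same approach as the paper: both split the divergence via the triangle inequality into the stochastic and compensator integrals, use the compensation formula under $\EE_s$ to merge them (yielding the factor $2$), integrate out $\theta$, and then apply Lemma~\ref{lem: maj_Dlambda12}. Your additional remarks on the vanishing for $u<s$, on predictability, on the sign hypothesis needed for Lemma~\ref{lem: maj_Dlambda12}, and on the extra factor $\tfrac{1}{2}$ (which the paper simply drops) are all accurate refinements.
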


\begin{proof} 
Let $s$ be in $[0,T]$. By the triangular inequality, we have
\begin{align*}
    \EE_s\left[|\delta\left(\z^T D_{(s,0)} Z^T)\right)|\right] &\leq \frac{1}{\sqrt{T}} \EE_s \left[ \iint_{(s,T]\times \mathbb{R}_+} \mathds{1}_{\{\theta \leq f^T(u)\}} \left|D_{(s,0)} \left( \frac{1}{\sqrt{f^T(u)}} \right|\right) N(du,d\theta)\right] \\
    &\quad + \frac{1}{\sqrt{T}} \EE_s \left[ \iint_{(s,T]\times \mathbb{R}_+} \mathds{1}_{\{\theta \leq f^T(u)\}} \left|D_{(s,0)} \left( \frac{1}{\sqrt{f^T(u)}} \right|\right) dud\theta\right]\\
    & \leq \frac{2}{\sqrt{T}} \EE_s \left[ \iint_{(s,T]\times \mathbb{R}_+} \mathds{1}_{\{\theta \leq f^T(u)\}} \left|D_{(s,0)} \left( \frac{1}{\sqrt{f^T(u)}} \right|\right) dud\theta\right].
\end{align*}
Moreover, integrating on $\theta$ and using Lemma \ref{lem: maj_Dlambda12}, we get
\begin{align*}
    \EE_s\left[|\delta\left(\z^T D_{(s,0)} Z^T)\right)|\right] &\leq \frac{1}{\sqrt{T}} \EE_s \left[ \iint_{(s,T]\times \mathbb{R}_+} \mathds{1}_{\{\theta \leq f^T(u)\}} \left|D_{(s,0)} \left( \frac{1}{\sqrt{f^T(u)}} \right|\right) N(du,d\theta)\right] \\
    &\quad + \frac{1}{\sqrt{T}} \EE_s \left[ \iint_{(s,T]\times \mathbb{R}_+} \mathds{1}_{\{\theta \leq f^T(u)\}} \left|D_{(s,0)} \left( \frac{1}{\sqrt{f^T(u)}} \right|\right) dud\theta\right]\\
    & \leq \frac{2}{\sqrt{T}} \EE_s \left[ \int_{(s,T]} \frac{\left| D_{(s,0)}\left(f^T(u) \right)\right|}{\sqrt{f^T(u)}}  du\right].
\end{align*}
\end{proof}

In the following lemma, we compute in a special case the third moment of $F^T = \delta\left( \z^T Z^T\right)$ with $Z^T_t = \frac{\1_{t\in [0,T]}}{\sqrt{T f^T(t)}}$ and $\z^T_{(t,\theta)} = \ind{\theta \leq f^T(t)}$ for $(t,\theta) \in \RR_+^2$.
\begin{lemma}
\label{lem: moment 3}
    For $f=g \in \Ccal$ with $\z_{(t,\theta)}^T D_{(t,\theta)}\left( Z^T\right)= \z_{(t,\theta)}^T D_{(t,0)}\left( Z^T\right)$ for any $T>0$, we have:
    $$\E{\left( F^T \right)^3} = \frac{1}{T^{3/2}} \int_0^T \E{\frac{1}{\sqrt{f^T(t)}}} dt.$$
\end{lemma}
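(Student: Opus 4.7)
My plan is to bypass the Malliavin/divergence machinery used elsewhere in the paper and recognize $F^T$ as a compensated stochastic integral against a counting process, to which Itô's formula for pure-jump semimartingales can be applied directly. Via the Poisson imbedding, define
$$H_t := \iint_{(0,t]\times\RR_+} \1_{\theta\leq f^T(s)} N(ds,d\theta), \qquad \tilde H_t := H_t - \int_0^t f^T(s)\,ds,$$
so that $H$ is an $\FF^N$-counting process with predictable intensity $f^T$. Setting $\phi_t := (Tf^T(t))^{-1/2}$, which is $\FF^N$-predictable, the defining expression \eqref{def: F^T} rewrites as $F^T = \int_0^T \phi_t\,d\tilde H_t =: M_T$, and $(M_t)_{t\in[0,T]}$ is a square-integrable martingale (integrability follows from Assumption \ref{assump: dans S}).

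The key step is to apply Itô's formula to $x\mapsto x^3$ for the pure-jump semimartingale $M$, with jumps $\Delta M_t = \phi_t\Delta H_t$. Using the algebraic identity $(a+b)^3-a^3-3a^2b = 3ab^2+b^3$, we obtain
$$M_T^3 = 3\int_0^T M_{t-}^2 \phi_t\, d\tilde H_t + \int_0^T \bigl(3M_{t-}\phi_t^2 + \phi_t^3\bigr)\,dH_t.$$
After compensating the last integral (writing $dH_t = d\tilde H_t + f^T(t)\,dt$) and taking expectations, the two $d\tilde H$-integrals vanish as expectations of true martingales, leaving
$$\E{(F^T)^3} \;=\; \E{\int_0^T \bigl(3M_{t-}\phi_t^2 + \phi_t^3\bigr) f^T(t)\,dt}.$$

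Finally I would exploit the two deterministic identities
$$\phi_t^2 f^T(t) = \frac{1}{T}, \qquad \phi_t^3 f^T(t) = \frac{1}{T^{3/2}\sqrt{f^T(t)}},$$
together with the martingale property $\E{M_{t-}} = 0$ for every $t\in[0,T]$. The first identity turns the cross-term into $\frac{3}{T}\int_0^T \E{M_{t-}}\,dt = 0$, and the second yields the announced
$$\E{(F^T)^3} \;=\; \frac{1}{T^{3/2}} \int_0^T \E{\frac{1}{\sqrt{f^T(t)}}}\,dt.$$

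\textbf{Main obstacle.} The only subtle point is justifying that the two stochastic integrals in the Itô decomposition are true martingales rather than merely local martingales; this reduces to controlling $\E{\int_0^T M_{t-}^2\phi_t^2 f^T(t)\,dt} = \frac{1}{T}\E{\int_0^T M_{t-}^2 dt}$ and analogous moments, which follow from Assumption \ref{assump: dans S} via Doob's inequality and a standard localization argument. Note that the additional hypothesis $\z^T_{(t,\theta)} D_{(t,\theta)}(Z^T) = \z^T_{(t,\theta)} D_{(t,0)}(Z^T)$ appearing in the statement is not used in this Itô-based approach; it would however be needed if one instead proved the identity by iterating the integration-by-parts formula \eqref{eq:IBPPoisson} together with Lemma \ref{lem: magique} to eliminate the $\delta\bigl(D_{(t,\theta)}(\z^T)[Z^T+D_{(t,\theta)}(Z^T)]\bigr)$ terms produced by Heisenberg's relation applied to $D_{(t,\theta)}(F^T)^2$.
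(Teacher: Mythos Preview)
Your proof is correct and follows a genuinely different route from the paper. The paper proceeds entirely within the Malliavin framework: it applies the integration-by-parts formula \eqref{eq:IBPPoisson} to $\E{F^T\,|F^T|^2}$, expands $D_{(t,0)}(|F^T|^2) = 2F^T D_{(t,0)}F^T + |D_{(t,0)}F^T|^2$, and then uses Heisenberg's relation (Proposition \ref{prop: heisenberg}) to write $D_{(t,0)}F^T = Z_t^T + \delta\bigl(D_{(t,0)}(\z^T Z^T)\bigr)$. This produces three conditional expectations $\EE_t[A_1]$, $\EE_t[A_2]$, $\EE_t[A_3]$; the first equals $|Z_t^T|^2$, the second vanishes trivially, and the third is shown to vanish by an explicit cancellation computation relying on Lemma \ref{lem: prod_div}. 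The extra hypothesis $\z^T_{(t,\theta)} D_{(t,\theta)}(Z^T)=\z^T_{(t,\theta)} D_{(t,0)}(Z^T)$ is exactly what allows the paper to reduce every $D_{(t,\theta)}$ to $D_{(t,0)}$ and then integrate in $\theta$.

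Your It\^o-calculus argument bypasses all of this: once $F^T$ is written as the compensated integral $M_T=\int_0^T\phi_t\,d\tilde H_t$ with $\phi_t^2 f^T(t)=1/T$, the third-moment identity drops out of the jump expansion $(a+b)^3-a^3-3a^2b=3ab^2+b^3$ and the martingale property $\E{M_{t-}}=0$. This is shorter, more elementary, and, as you correctly note, does not use the stated hypothesis at all. The trade-off is that your approach is specific to $f=g$ (where the algebraic simplification $\phi_t^2 f^T(t)=1/T$ occurs), whereas the paper's Malliavin machinery is the same toolbox used throughout Sections \ref{sec: gen estimate}--\ref{sec:nearly}; keeping the proof in that language makes Lemma \ref{lem: moment 3} stylistically consistent with Lemma \ref{lem: ineq 2} and Theorem \ref{thm: main}. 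Your remark on the integrability/localization issue is fair: the paper does not verify the corresponding integrability for its Malliavin manipulations either, so your treatment is no less rigorous.
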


\begin{proof}
    Let $T>0$ be fixed. Using the integration by part formula, we have
\begin{align*}
\E{\left(F^T \right)^3} &= \E{F^T \left(F^T \right)^2} = \iint_{\RR_+^2}\E{ \z^T_{(t,\theta)} Z^T_t D_{(t,0)} \left( \left|F^T \right|^2 \right)} dt d\theta \\
&= \frac{1}{\sqrt{T}}\int_0^T \E{ \sqrt{f^T(t)} D_{(t,0)} \left( \left|F^T \right|^2 \right)} dt. 
\end{align*}
Moreover, for $t\in [0,T]$, we have
\begin{align*}
    D_{(t,0)} \left( \left|F^T \right|^2 \right) &= 2 F^T D_{(t,0)} \left(F^T \right) + \left|D_{(t,0)} \left(F^T \right) \right|^2.
\end{align*}
 Thus, Heisenberg's equality (see Proposition \ref{prop: heisenberg}) yields
\begin{align*}
    D_{(t,0)} \left( \left|F^T \right|^2 \right) &= 2 \left\{ Z_t^T + \delta \left(D_{(t,0)} \left(\z^T Z^T\right) \right)\right\}\delta \left( \z^T Z^T\right) + \left\{ Z_t^T + \delta \left(D_{(t,0)} \left(\z^T Z^T\right) \right)\right\}^2\\
    &= \left[\left| Z_t^T \right|^2 + 2 Z_t^T \delta \left(\z^T Z^T \right) \right]+ 2 Z_t^T \delta \left(D_{(t,0)} \left(\z^T Z^T\right)\right) \\
    & \quad + \delta \left(D_{(t,0)} \left(\z^T Z^T\right)\right) \delta\left( 2\z^T Z^T+D_{(t,0)} \left(\z^T Z^T\right) \right).
\end{align*}
Let us denote by $A_1$, $A_2$ and $A_3$ the right hand terms of the equality, i.e.
\begin{align*}
    A_1 &:= \left| Z_t^T \right|^2 + 2 Z_t^T \delta \left(\z^T Z^T \right); \\
    A_2 &:= 2 Z_t^T \delta \left(D_{(t,0)} \left(\z^T Z^T\right)\right); \\
    A_3 &:= \delta \left(D_{(t,0)} \left(\z^T Z^T\right)\right) \delta\left( 2\z^T Z^T+D_{(t,0)} \left(\z^T Z^T\right) \right).
\end{align*}
Now, note that $\EE_t \left[A_1 \right] = \left| Z^T_t \right|^2$ and $\EE_t \left[A_2 \right] =0$. Let us prove that we also have $\EE_t \left[A_3 \right] =0$.\\
Take $(s,\rho) \in \RR_+^2$. Then, by rearranging the terms, we get
\begin{align*}
    2\z_{(s,\rho)}^T Z_s^T+D_{(t,0)} \left(\z_{(s,\rho)}^T Z_s^T\right) &= \frac{1}{\sqrt{T}} \left(\frac{\ind{f^T(s) \leq \rho \leq f^T(s) \circ \eps^{+}_{(t,0)}}}{\sqrt{f^T(s) \circ \eps^{+}_{(t,0)}}}+ \ind{\rho \leq f^T(s)} \left[\frac{1}{\sqrt{\lambda^T_s\circ \eps^{+}_{(t,0)}}} + \frac{1}{\sqrt{f^T(s)}}\right]\right).
\end{align*}
Therefore, using Lemma \ref{lem: prod_div}, we can write
\begin{align*}
    \EE_t\left[ A_3\right] &=\frac{1}{\sqrt{T}} \EE_t \left[\int_t^T \int_{\RR^*_+} D_{(t,0)} \left(\z_{(s,\rho)}^T Z_s^T\right) \left\{\frac{\ind{f^T(s) \leq \rho \leq f^T(s) \circ \eps^{+}_{(t,0)}}}{\sqrt{f^T(s) \circ \eps^{+}_{(t,0)}}}\right. \right.\\
    &\quad + \left. \left.\ind{\rho \leq f^T(s)} \left[\frac{1}{\sqrt{f^T(s)\circ \eps^{+}_{(t,0)}}} - \frac{1}{\sqrt{f^T(s)}}\right] \right\} ds d\rho \right].
\end{align*}
We now develop the terms in the integral which provides
$$ \EE_t\left[ A_3\right]= \frac{1}{T} \EE_t \left[\int_t^T \int_{\RR^*_+} \frac{\ind{f^T(s) \leq \rho \leq f^T(s) \circ \eps^{+}_{(t,0)}}}{f^T(s) \circ \eps^{+}_{(t,0)}}  + \ind{\rho \leq f^T(s)} \left[\frac{1}{f^T(s)\circ \eps^{+}_{(t,0)}} - \frac{1}{f^T(s)}\right]ds d\rho \right]. $$
Finally, by integrating according to $\rho$, we have
$$\EE_t\left[ A_3\right]= \frac{1}{T} \EE_t \left[ \int_t^T \frac{f^T(s) \circ \eps^+_{(t,0)} - f^T(s)}{f^T(s) \circ \eps^{+}_{(t,0)}} + f^T(s) \left[\frac{1}{f^T(s)\circ \eps^{+}_{(t,0)}} - \frac{1}{f^T(s)}\right] ds\right]= 0 .$$

Therefore, $\EE_t\left[ D_{(t,0)} \left( \left|F^T \right|^2 \right) \right] = \left| Z^T_t\right|^2$ and so
$$\E{ \left( F^T\right)^3} = \frac{1}{\sqrt{T}} \int_0^T \E{\sqrt{f^T(t)}  \left| Z^T_t\right|^2} dt = \frac{1}{T^{3/2}} \int_0^T \E{\frac{1}{\sqrt{f^T(t)}}} dt.$$

\end{proof}

\appendix
\section{(Stochastic) Volterra's Equation}
\label{app : volterra}

We write in this appendix some results about the solution or sub-solution of Volterra's equation. We believe these results are known but we have not found paper with our setting. That is why we decide to provide proofs in this appendix. We split it in two different parts: the first part is about the classical Volterra's equation and the second deal with a stochastic version of such equation.

\subsection{Some elements on deterministic Volterra's equation}
\begin{lemma}[Lemma 3, \cite{bacry_limit_2013}]
    Let $M$ be a locally bounded function from $\RR_+$ to $\RR$. For $\Phi: \RR_+ \to \RR$ such that $\Vert \Phi \Vert_1 <1$, there exists a unique locally bounded function $L: \RR_+ \to \RR$ solution to
    \begin{equation}
    \label{eq: volterra eq}
        L(t) = M(t) + \int_0^t \Phi(t-s) L(s) ds, \; t \geq 0
    \end{equation}
    given by
    $$ L(t) = M(t) + \int_0^t \Psi(t-s) M(s) ds$$
    where $\Psi = \sum\limits_{k\geq 1} \Phi^{\ast k}$ and $\Phi^{\ast (k+1)} = \Phi \ast \Phi^{\ast k}$, $k\geq 1$.
\end{lemma}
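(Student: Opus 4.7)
The plan is to proceed in three stages: construct the resolvent kernel $\Psi$ and derive its key identity, verify the closed-form candidate solves the Volterra equation, and prove uniqueness among locally bounded solutions.

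First I would build $\Psi$. Young's convolution inequality on $\RR_+$ yields by induction $\|\Phi^{*k}\|_{L^1} \leq \|\Phi\|_1^k$, so under the hypothesis $\|\Phi\|_1 < 1$ the series $\Psi := \sum_{k\geq 1} \Phi^{*k}$ converges absolutely in $L^1(\RR_+)$, with $\|\Psi\|_1 \leq \|\Phi\|_1/(1-\|\Phi\|_1)$. Isolating the first term of the series gives the resolvent identity
$$ \Psi = \Phi + \Phi * \Psi, $$
which will drive the rest of the argument.

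Next I would verify the formula by direct substitution. Setting $L(t) := M(t) + (\Psi * M)(t)$ and using Fubini to justify associativity of convolution,
$$ M(t) + (\Phi * L)(t) = M(t) + (\Phi * M)(t) + (\Phi * \Psi * M)(t) = M(t) + ((\Phi + \Phi * \Psi) * M)(t) = L(t), $$
the last equality being the resolvent identity. Local boundedness of $L$ is then immediate: on any $[0,T]$, $|(\Psi * M)(t)| \leq \sup_{[0,T]}|M|\cdot \|\Psi\|_1 < \infty$ since $M$ is locally bounded and $\Psi \in L^1(\RR_+)$.

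For uniqueness, I would take two locally bounded solutions $L_1, L_2$ and set $\Delta := L_1 - L_2$, which satisfies $\Delta(t) = \int_0^t \Phi(t-s)\Delta(s)\,ds$. For any $T > 0$, local boundedness of $\Delta$ gives $M_T := \sup_{t \in [0,T]}|\Delta(t)| < \infty$, and the identity yields
$$ |\Delta(t)| \leq M_T \int_0^t |\Phi(t-s)|\,ds \leq M_T \, \|\Phi\|_1, \qquad t\in[0,T], $$
so $M_T \leq \|\Phi\|_1 \cdot M_T$. Since $\|\Phi\|_1 < 1$, this forces $M_T = 0$, and as $T > 0$ was arbitrary, $L_1 \equiv L_2$.

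No step is a genuine obstacle in this classical argument; the only care needed is in justifying the $L^1$-convergence of the Neumann-type series defining $\Psi$ and in handling the convolutions pointwise (or almost everywhere) through Fubini, both of which are controlled by the contractive assumption $\|\Phi\|_1 < 1$ together with the local boundedness of $M$.
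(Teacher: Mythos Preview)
The paper does not give its own proof of this lemma: it is quoted verbatim as Lemma~3 of \cite{bacry_limit_2013} and the appendix immediately moves on to the subsequent positivity and comparison results. Your argument is the standard Neumann-series/resolvent proof and is correct; the three steps (convergence of $\sum_k \Phi^{*k}$ in $L^1$ via Young, verification through the identity $\Psi=\Phi+\Phi*\Psi$, and uniqueness by the contraction $M_T\le \|\Phi\|_1 M_T$) are exactly what one expects and there is nothing to compare against in the paper itself.
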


The positivity of the solution and a comparison of solutions of \eqref{eq: volterra eq} is given in the two following lemmas.

\begin{lemma}
\label{lemma:positiveVolterra}
Let $\Phi \geq 0$ with $\|\Phi\|_1 < 1$. Let $M:\RR_+ \to \RR_+$ be locally bounded. Then the unique locally bounded solution $L$ to \eqref{eq: volterra eq}
is such that $L(t) \geq 0$, for all $t\geq 0$. 
\end{lemma}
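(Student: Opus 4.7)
The natural route is to exploit the explicit resolvent formula established in the previous lemma, namely
$$L(t) = M(t) + \int_0^t \Psi(t-s) M(s)\, ds, \qquad \Psi = \sum_{k \geq 1} \Phi^{\ast k}.$$
Since $M \geq 0$ by assumption, it will be enough to prove that $\Psi \geq 0$, from which the non-negativity of $L$ follows by inspection of the formula.

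First, I would establish by induction on $k$ that $\Phi^{\ast k} \geq 0$ for every $k \geq 1$. The base case $k=1$ is the hypothesis $\Phi \geq 0$. For the induction step,
$$\Phi^{\ast(k+1)}(t) = \int_0^t \Phi(t-s)\, \Phi^{\ast k}(s)\, ds \geq 0,$$
since both factors under the integral sign are non-negative.

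Second, I would justify that the series $\Psi = \sum_{k \geq 1} \Phi^{\ast k}$ is well-defined (at least in $L^1_{\mathrm{loc}}(\RR_+)$) and consequently still non-negative. By Young's convolution inequality, $\|\Phi^{\ast k}\|_1 \leq \|\Phi\|_1^k$, so the assumption $\|\Phi\|_1 < 1$ gives $\sum_{k \geq 1} \|\Phi^{\ast k}\|_1 < +\infty$. Hence the series converges absolutely in $L^1(\RR_+)$, and $\Psi(t) \geq 0$ for almost every $t \geq 0$ (and thus everywhere after choosing the canonical representative, which is the one appearing in the resolvent formula). Plugging this back into the explicit formula, together with $M \geq 0$, yields $L(t) \geq 0$ for every $t \geq 0$.

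No step here is a real obstacle: the only technical point is controlling the convolution series, which is handled by Young's inequality together with the contraction hypothesis $\|\Phi\|_1 < 1$. The proof is therefore a straightforward consequence of the structure of the resolvent already identified in the previous lemma.
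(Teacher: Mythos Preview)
Your proposal is correct and follows exactly the paper's approach: use the explicit resolvent formula $L(t)=M(t)+\int_0^t \Psi(t-s)M(s)\,ds$ and the non-negativity of $\Psi$ (inherited from $\Phi\geq 0$) to conclude. The paper's proof is in fact a one-line version of yours, simply noting that $\Phi\geq 0$ yields the stronger inequality $L(t)\geq M(t)$; your added details on the induction for $\Phi^{\ast k}\geq 0$ and the $L^1$-convergence of the series are the natural justifications behind that line.
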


\begin{proof}
As $\Phi\geq 0$ we have the stronger result that $L(t) \geq M(t)$ if $M\geq 0$ as
$$ L(t) = M(t) + \int_0^t \Psi(t-s) M(s) ds.$$
\end{proof}

\begin{lemma}
\label{lemma:positiveVolterracomparison}
Let $\Phi \geq 0$ with $\|\Phi\|_1 < 1$. Let $M_1, M_2:\mathbb R_+ \to \mathbb R_+$ both locally bounded. Let $L_i$ the unique locally bounded solution to : 
$$ L_i(t) = M_i(t) + \int_0^t \Phi(t-s) L_i(s) ds, \; t \in [0,T], \; i=1,2.$$
Then,
$$ \left[M_1(t) \geq M_2(t), \; \forall t\geq 0\right] \Rightarrow \left[L_1(t) \geq L_2(t), \; \forall t\geq 0\right].$$
\end{lemma}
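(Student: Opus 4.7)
The plan is to reduce this comparison statement to the previous positivity Lemma \ref{lemma:positiveVolterra} by exploiting the linearity of the Volterra equation. Setting $L := L_1 - L_2$ and $M := M_1 - M_2$, I subtract the two defining equations to obtain
$$ L(t) = M(t) + \int_0^t \Phi(t-s) L(s) ds, \quad t \geq 0. $$
By hypothesis $M \geq 0$ everywhere, and $M$ is locally bounded as the difference of two locally bounded functions. Moreover $\Phi \geq 0$ with $\|\Phi\|_1 < 1$. Hence $L$ is a locally bounded solution to a Volterra equation of exactly the form considered in Lemma \ref{lemma:positiveVolterra} (note that $M$ is allowed to be real-valued but the positivity is what matters here, and the $\RR_+$-valuedness of $M$ is precisely ensured by $M_1 \geq M_2$).

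Applying Lemma \ref{lemma:positiveVolterra} then yields $L(t) \geq 0$ for all $t \geq 0$, which is exactly the claim $L_1(t) \geq L_2(t)$ for all $t\geq 0$.

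Alternatively (and equivalently), one may invoke directly the explicit resolvent formula recalled before Lemma \ref{lemma:positiveVolterra}: since $\Phi \geq 0$ one has $\Psi = \sum_{k\geq 1} \Phi^{\ast k} \geq 0$, and therefore
$$ L_1(t) - L_2(t) = \bigl(M_1(t) - M_2(t)\bigr) + \int_0^t \Psi(t-s)\bigl(M_1(s) - M_2(s)\bigr) ds \geq 0. $$
There is no real obstacle here; the only point to double-check is that the uniqueness part of Lemma 3 of \cite{bacry_limit_2013} lets us legitimately subtract the two equations and identify $L_1 - L_2$ as \emph{the} (unique) locally bounded solution associated to the source $M_1 - M_2$, so that Lemma \ref{lemma:positiveVolterra} applies without ambiguity.
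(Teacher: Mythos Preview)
Your proof is correct and follows exactly the same approach as the paper: subtract the two equations and apply Lemma~\ref{lemma:positiveVolterra} to the difference. The paper's one-line proof contains a small typo (it writes $M(t):=L_1(t)-L_2(t)$ where it clearly means $M(t):=M_1(t)-M_2(t)$), but your version spells this out correctly.
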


\begin{proof}
Apply Lemma \ref{lemma:positiveVolterra} to $M(t):=L_1(t)-L_2(t) \geq 0$.
\end{proof}

\begin{proposition}
\label{prop: sous_sol volterra}
Let $\Phi \geq 0$ with $\|\Phi\|_1 < 1$. Let $M:\mathbb R_+ \to \mathbb R_+$ be locally bounded and $l:\mathbb R_+ \to \mathbb R_+$ in $L_1(\mathbb R_+)$ such that:
$$ l(t) \leq M(t) + \int_0^t \Phi(t-s) l(s) ds, \quad t\geq 0.$$
Then
$$ l(t) \leq M(t)+\int_0^t \Psi(t-s) M(s) ds.$$
\end{proposition}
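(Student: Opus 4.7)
My plan is to iterate the assumed sub-solution inequality and exploit the summability induced by $\|\Phi\|_1 < 1$. Since $\Phi \geq 0$ and $l \geq 0$, substituting the hypothesis $l \leq M + \Phi \ast l$ into itself and proceeding by induction on $n$ yields, for every $n \geq 0$ and a.e.\ $t \geq 0$,
$$l(t) \;\leq\; M(t) \;+\; \sum_{k=1}^{n} (\Phi^{\ast k} \ast M)(t) \;+\; (\Phi^{\ast(n+1)} \ast l)(t).$$

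The next task is to control the remainder. Fixing $T > 0$, Young's convolution inequality on the half-line gives
$$\|\Phi^{\ast(n+1)} \ast l\|_{L^1([0,T])} \;\leq\; \|\Phi^{\ast(n+1)}\|_1 \, \|l\|_1 \;\leq\; \|\Phi\|_1^{n+1} \|l\|_1 \;\xrightarrow[n\to\infty]{}\; 0,$$
so the remainder tends to $0$ in $L^1([0,T])$, hence to $0$ a.e.\ along some subsequence $n_j \to \infty$. On the other hand, because $\Phi^{\ast k} \geq 0$ and $M \geq 0$, two applications of the monotone convergence theorem (one to identify $\sum_k \Phi^{\ast k}$ with $\Psi$, another to interchange the sum and the integral defining the convolution) give
$$\sum_{k=1}^{\infty} (\Phi^{\ast k} \ast M)(t) \;=\; (\Psi \ast M)(t) \quad \text{for a.e. } t \geq 0.$$

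Passing to the limit along $(n_j)$ in the iterated inequality then yields $l(t) \leq M(t) + (\Psi \ast M)(t)$ a.e.\ on $[0,T]$; since $T$ is arbitrary, the bound holds a.e.\ on $\mathbb{R}_+$, which is the claim.

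There is no genuine difficulty here: the argument is a standard Gronwall-type lemma tailored to Volterra convolution inequalities. The only mild technicality is the passage from $L^1$ convergence of the remainder to pointwise a.e.\ convergence, handled by extracting a subsequence; the non-negativity of $\Phi$, $M$ and $l$ (together with the sub-multiplicativity $\|\Phi^{\ast n}\|_1 \leq \|\Phi\|_1^n$) makes the induction step and both monotone-convergence applications automatic. An alternative, slightly more structural route would be to introduce the defect $a := M + \Phi \ast l - l \geq 0$, observe that $l$ solves the Volterra equation $l = (M-a) + \Phi \ast l$, invoke uniqueness of $L^1_{loc}$ solutions to write $l = (M-a) + \Psi \ast (M-a) = M + \Psi \ast M - (a + \Psi \ast a)$, and conclude from $a, \Psi \geq 0$; but the iteration above is more direct and avoids invoking a separate uniqueness statement.
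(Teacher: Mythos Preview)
Your iteration argument is correct and constitutes a genuinely different route from the paper. The paper proceeds structurally: it sets $m(t):=l(t)-\int_0^t \Phi(t-s)l(s)\,ds$, observes $m\leq M$, lets $L$ be the unique solution of $L=M+\Phi\ast L$ (so $L=M+\Psi\ast M$), and then notes that $l-L$ solves the Volterra equation with forcing $m-M\leq 0$, whence $l-L\leq 0$ by the positivity Lemma~\ref{lemma:positiveVolterra}. In other words, the paper's proof is exactly the ``alternative, slightly more structural route'' you sketch in your closing paragraph.

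What each approach buys: your Picard-iteration argument is self-contained and does not rely on the preceding existence/uniqueness/comparison lemmata; it only needs $l\in L^1$, $\Phi\geq 0$, and $\|\Phi\|_1<1$. The paper's argument is shorter once those lemmata are in hand, and it delivers the inequality for every $t$ rather than almost every $t$ (your $L^1$-subsequence trick yields only an a.e.\ conclusion, which is harmless for the paper's downstream uses but is a slightly weaker statement than what is written). If you wanted pointwise everywhere from your approach, you could note that $\Phi^{\ast(n+1)}\ast l$ is nonnegative, so for each fixed $t$ the partial sums $M+\sum_{k=1}^n \Phi^{\ast k}\ast M$ are bounded above by $M+\Psi\ast M$ and dominate $l(t)-(\Phi^{\ast(n+1)}\ast l)(t)$; then argue that $(\Phi^{\ast(n+1)}\ast l)(t)\to 0$ for each $t$ (e.g.\ by dominated convergence, since $\Phi^{\ast(n+1)}(t-s)\leq \Psi(t-s)$ and $\int_0^t \Psi(t-s)l(s)\,ds<\infty$ a.e.).
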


\begin{proof}
Let $m(t):=l(t)-\int_0^t \Phi(t-s) l(s) ds$. So $m(t) \leq M(t)$ for all $t$. Let $L$ be the unique solution to 
$$ L(t) = M(t) + \int_0^t \Phi(t-s) L(s) ds, \; t \geq 0.$$
We have that $L(t) = M(t) + \int_0^t \Psi(t-s) M(s) ds$ and
$$l(t)-L(t) = (m(t)-M(t))+\int_0^t \Phi(t-s) (l(s)-L(s)) ds $$
and since $m(t)-M(t)\leq 0$ for any $t$, Lemma \ref{lemma:positiveVolterra} gives $l(t)-L(t) \leq 0$ for $t\geq 0$.
\end{proof}

\subsection{Stochastic Volterra's equation}

We now present a generalisation of Theorem 2.7 in \cite{hillairet_expansion_2023} by introducing non-linearity.
The non-linear property may alter some elements in the existence part, which we elaborate on in the proof. Many proofs exists in the literature. Yet, our proof is based on the Poisson Imbedding and relies on the method used by Ogata \cite{ogata_lewis_1981}. Moreover, we have chosen not to present the uniqueness part here and instead refer to \cite{hillairet_expansion_2023} for an explicit proof.

Let $\GG = \left(\Gcal_s \right)_{s\in \RR_+}$ be a filtration such that $\FF^N \subset \GG$ and such that $N$ is a $\GG$ Poisson measure.

\begin{proposition}
\label{prop: existence}
    Let $t_0 \in \RR_+$, $\phi:\RR_+ \to \RR$ be locally bounded, $h:\RR \to \RR_+$ be such that $h$ is $\alpha$-Lipschitz.
    In addition, let 
    $(\mu_t)_{t\geq t_0}$ be a $\GG$-predictable process such that $t\mapsto \E{h(\mu_t) \mid \Gcal_0}$ is locally bounded.
    Then, the SDE below admits a unique $\GG$-predictable solution $\Lambda$. 
    \begin{equation}
    \label{eq: volterra general}
    \Lambda_t = h \left(\mu_t + \iint_{(t_0,t) \times \RR_+} \phi(t-u)\ind{\theta \leq \Lambda_u} dN(u,\theta) \right) ,\quad t \geq t_0.
    \end{equation}
    Moreover, for any $t\geq t_0$, we have
    $$\E{\Lambda_t \mid \Gcal_0} \leq \E{h(\mu_t)\mid \Gcal_0} + \int_{t_0}^t \psi^{(\alpha)}(t-s) \E{h(\mu_s)\mid \Gcal_0} ds $$
    where $\psi^{(\alpha)} = \sum\limits_{k\geq 1} \alpha^k \left|\phi\right|^{\ast k} $ and $\left|\phi\right|^{\ast (k+1)} = \left|\phi\right| \ast \left|\phi\right|^{\ast k}$, for $k\geq 1$.
\end{proposition}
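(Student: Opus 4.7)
My plan is to construct $\Lambda$ pathwise by Ogata's thinning of the Poisson measure $N$, and then to derive the integral bound as a direct consequence of the Lipschitz property of $h$ combined with Proposition \ref{prop: sous_sol volterra}. Uniqueness follows \emph{verbatim} from the argument in \cite{hillairet_expansion_2023} and will not be reproduced.

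For existence, I would proceed inductively on the jumps of $N$ that are ``accepted'' under the (candidate) graph of $\Lambda$. Setting $T_0:=t_0$, and having built accepted times $T_0 < T_1 < \cdots < T_n$ together with $\Lambda$ on $[t_0,T_n]$, I would extend on $(T_n,T_{n+1}]$ by
$$\Lambda_t \;:=\; h\Bigl(\mu_t + \sum_{i=1}^n \phi(t-T_i)\Bigr),$$
where $T_{n+1}$ is the first point $(t,\theta)$ of $N$ with $t>T_n$ and $\theta \leq \Lambda_t$. Local boundedness of $\phi$ and of $t\mapsto \E{h(\mu_t)\mid\Gcal_0}$ makes the construction well-posed at each step. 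The resulting process is left-continuous and adapted to $\GG$, hence $\GG$-predictable, and satisfies equation \eqref{eq: volterra general} by construction.

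For the bound, the $\alpha$-Lipschitz property of $h$, written in the form $h(x+y)\leq h(x) + \alpha|y|$, gives pathwise
$$\Lambda_t \;\leq\; h(\mu_t) \,+\, \alpha \iint_{(t_0,t)\times \RR_+} |\phi(t-u)|\, \ind{\theta \leq \Lambda_u}\, N(du,d\theta).$$
Taking $\E{\,\cdot\,\mid \Gcal_0}$, using that $N$ has intensity $du\,d\theta$ under $\GG$ together with Fubini, and writing $m(t):=\E{\Lambda_t\mid \Gcal_0}$ and $M(t):=\E{h(\mu_t)\mid \Gcal_0}$, one arrives at the scalar Volterra sub-inequality
$$m(t) \;\leq\; M(t) \,+\, \alpha \int_{t_0}^t |\phi(t-u)|\, m(u)\, du.$$
Proposition \ref{prop: sous_sol volterra} applied with $\Phi := \alpha|\phi|$ (in the subcritical regime $\alpha\|\phi\|_1<1$ that is needed for $\psi^{(\alpha)}$ to be summable) then yields exactly the claimed inequality.

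The main obstacle is to avoid a circularity between the existence construction and the moment bound: the Volterra inequality is precisely what guarantees non-explosion of $(T_n)$, while writing $\E{\Lambda_t\mid\Gcal_0}$ implicitly requires $T_n \uparrow +\infty$ a.s. I would resolve this by a standard localisation on the stopped process $\Lambda^{(n)}_t:=\Lambda_{t\wedge T_n}$, deriving the Volterra sub-inequality in an $n$-uniform fashion from the local construction, then using Markov's inequality on the associated jump count to force $T_n\uparrow \infty$ a.s.\ on every compact interval, and finally passing to the limit by monotone convergence to transfer the bound to $\Lambda$ itself.
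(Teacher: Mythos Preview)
Your proposal is correct and follows essentially the same route as the paper: Ogata-type thinning to construct $\Lambda$ between successive accepted jumps, a Volterra-type moment inequality derived from the Lipschitz property of $h$, Markov's inequality on the jump count to exclude explosion, and a reference to \cite{hillairet_expansion_2023} for uniqueness. The paper's inductive sequence $(\Lambda^{(n)},H^{(n)},\tau_n^H)$ is exactly your localisation $\Lambda_{t\wedge T_n}$, and your circularity discussion mirrors the paper's logic precisely.

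One technical point worth noting: you invoke Proposition \ref{prop: sous_sol volterra}, which as stated assumes $\|\Phi\|_1<1$, i.e.\ $\alpha\|\phi\|_1<1$; but Proposition \ref{prop: existence} does not carry this hypothesis (only local boundedness of $\phi$). The paper avoids this by proving the bound directly by induction on $n$: from
\[
\E{\Lambda^{(n+1)}_t\mid\Gcal_0}\;\le\;\E{h(\mu_t)\mid\Gcal_0}+\alpha\int_{t_0}^t|\phi(t-s)|\,\E{\Lambda^{(n)}_s\mid\Gcal_0}\,ds
\]
one iterates and obtains the partial sums of $\psi^{(\alpha)}=\sum_{k\ge 1}\alpha^k|\phi|^{*k}$, which converge locally for any locally bounded $\phi$ regardless of whether $\alpha\|\phi\|_1<1$. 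Your route through Proposition \ref{prop: sous_sol volterra} is therefore slightly narrower than needed; either redo the sub-solution argument on compacts (where the relevant resolvent series always converges) or switch to the paper's direct iteration on the approximants.
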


\begin{proof}
    For $t\geq t_0$, we set $\Lambda^{(1)}_t$ and $H^{(1)}_t$ as follow
$$\Lambda^{(1)}_t := h(\mu_t) \quad \text{and} \quad H_t^{(1)} := \iint_{(t_0, t]\times \RR_+} \ind{\theta \leq \Lambda_s^{(1)}} N(ds, d\theta).$$
Moreover, we set $\tau^H_1$ the first time jump of $H^{(1)}$, i.e.
$$\tau^H_1 := \inf\left\{ \tau >t_0 \mid H_\tau^{(1)} =1\right\}$$
Then, $\tau_1^{H}$ is a $\GG$ stopping time. The sequence $\left( \Lambda^{(n)}, H^{(n)}, \tau^H_n\right)_{n\in \NN^*}$ is constructed by induction with:
\begin{align*}
        \Lambda^{(n+1)}_t &:= h\left( \mu_t + \iint_{(t_0,t)\times \RR_+} \1_{s\in [0,\tau^H_n]} \phi(t-s) \ind{\theta \leq \Lambda_s^{(n)}} N(ds,d\theta)\right), \quad t\geq 0;\\
        H^{(n+1)}_t &:= \iint_{(t_0, t]\times \RR_+} \ind{\theta \leq \Lambda_s^{(n+1)}} N(ds, d\theta), \quad t\geq 0;\\
        \tau_{n+1}^{H} &:= \inf \left\{ \tau >t_0 \mid H^{(n+1)}_\tau = n+1 \right\}.
\end{align*}
Using an induction reasoning, one can prove that $(\tau_n^{H})_{n\in \NN^*}$ is a sequence of $\GG$ stopping times and $\left( \Lambda^{(n)} \right)_{n\in \NN^*}$ is a sequence of $\Gcal_{\tau_n^H}$-measurable and predictable stochastic processes. Note that we have $\Lambda^{(n)}=\Lambda^{(n+1)}$ on $[t_0, \tau_n^{H}]$ for any $n \in \NN^*$.
Moreover, we can prove by induction that for any $n\in \NN^*$, 
$$\E{\Lambda^{(n)}_t \mid \Gcal_0} \leq \E{h(\mu_t) \mid \Gcal_0} + \int_{t_0}^t \psi^{(\alpha)}(t-s) \E{h(\mu_s)\mid \Gcal_0} ds .$$
It suffices to remark that $t\mapsto \E{h(\mu_t) \mid \Gcal_0}$ is locally bounded and that for $t\geq t_0$ and $n\in \NN^*$ we have
\begin{align*}
    \E{\Lambda^{(n+1)}_t \mid \Gcal_0 } &\leq \E{h(\mu_t)\mid \Gcal_0} + \alpha \int_{t_0}^t \left|\phi(t-s)\right| \E{\Lambda^{(n)}_s \mid \Gcal_0} ds.
\end{align*}

We now set $H$ and $\Lambda$ as the limits of $H^{(n)}$ and $\Lambda^{(n)}$, i.e. for any $t\geq t_0$,
$$H_t := \sum\limits_{k\geq 1} \ind{t\geq \tau^H_k} \quad \text{and} \quad \Lambda_t := \lim\limits_{n\to +\infty} \Lambda^{(n)}_t .$$

Finally we prove that $\lim\limits_{n\to +\infty} \tau^H_n = +\infty$. Take $\mathfrak{t}>0$, then by construction 
    \begin{align*}
        {\mathbb P}\left(\tau^\ast <\mathfrak{t} \right) = \limsup_{n\to +\infty}\PP \left( \tau_n^H <\mathfrak{t}\right) \leq \limsup_{n\to +\infty}\PP \left( H_{\mathfrak{t}}\geq n\right).
        \end{align*}
    Using Markov's inequality, we get
    $$\PP\left( \tau^\ast \leq \mathfrak{t}\right) \leq \limsup_{n\to +\infty}\frac{\E{H_{\mathfrak{t}}}}{n} = \limsup_{n\to +\infty} \frac{\int_0^{\mathfrak{t}}\E{\Lambda_s} ds}{n}.$$
    Finally, $s\mapsto \E{\Lambda_s}$ is locally bounded since $s\mapsto \E{h(\mu_s) \mid \Gcal_0}$ is, and so
    $\limsup_{n\to +\infty} \frac{\int_0^{\mathfrak{t}}\E{\Lambda_s} ds}{n} = 0$.
    Thus, $\PP \left(\tau^\ast = +\infty \right)= 1$.
    
\end{proof}

For $\GG = \FF^N$, we have this classical result:
\begin{corollary}
    Let $\phi:\RR_+ \to \RR$ be locally bounded, and $h:\RR \to \RR_+$ be such that $h$ is $\alpha$-Lipschitz and
    $$\alpha \int_{0}^{+\infty} \left| \phi(t) \right| dt <1 .$$ Consider $\lambda$ the solution of 
    $$\lambda_t = h \left(\mu + \iint_{(0,t) \times \RR_+} \phi(t-u)\ind{\theta \leq \lambda_u} dN(u,\theta) \right) ,\quad t \in \RR_+ .$$
    Then,
    $$\sup\limits_{t\geq 0}\E{\lambda_t} \leq \frac{h(\mu)}{1-\alpha \Vert \phi \Vert_1}.$$
\end{corollary}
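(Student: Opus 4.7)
The plan is to invoke Proposition \ref{prop: existence} in the simplest possible setting, with $\GG = \FF^N$, $t_0 = 0$, and the deterministic constant process $\mu_t \equiv \mu$. Since $h$ is Lipschitz and $\mu$ is a real number, the map $t \mapsto \E{h(\mu_t) \mid \Gcal_0} = h(\mu)$ is a finite constant (hence locally bounded), so the hypotheses of the proposition are satisfied. This yields existence and uniqueness of $\lambda$, together with the bound
$$\E{\lambda_t} \leq h(\mu) + \int_0^t \psi^{(\alpha)}(t-s)\, h(\mu) \, ds = h(\mu)\left(1 + \int_0^t \psi^{(\alpha)}(u)\, du\right),$$
for every $t \geq 0$.

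Next I would control $\int_0^t \psi^{(\alpha)}(u) du$ by $\|\psi^{(\alpha)}\|_1$ (valid since $\psi^{(\alpha)}\geq 0$). Using the definition $\psi^{(\alpha)} = \sum_{k\geq 1} \alpha^k |\phi|^{*k}$, the convolution identity $\||\phi|^{*k}\|_1 = \|\phi\|_1^k$, and Fubini--Tonelli to interchange sum and integral, we obtain the geometric series
$$\|\psi^{(\alpha)}\|_1 = \sum_{k \geq 1} \alpha^k \|\phi\|_1^k = \frac{\alpha \|\phi\|_1}{1-\alpha \|\phi\|_1},$$
which is finite precisely because of the assumption $\alpha \|\phi\|_1 < 1$.

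Substituting this into the previous display gives
$$\E{\lambda_t} \leq h(\mu)\left(1 + \frac{\alpha \|\phi\|_1}{1-\alpha\|\phi\|_1}\right) = \frac{h(\mu)}{1-\alpha\|\phi\|_1},$$
and since the right-hand side does not depend on $t$, taking the supremum over $t\geq 0$ yields the claim. There is no real obstacle here: this corollary is essentially a direct specialisation of Proposition \ref{prop: existence}, the only computational point being the summation of the geometric series for $\|\psi^{(\alpha)}\|_1$, which is standard.
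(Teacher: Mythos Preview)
Your proposal is correct and matches the paper's approach exactly: the paper presents this corollary as an immediate specialization of Proposition~\ref{prop: existence} with $\GG = \FF^N$, $t_0 = 0$, and $\mu_t \equiv \mu$, without even writing out a proof. The only computation needed is the geometric series $\|\psi^{(\alpha)}\|_1 = \frac{\alpha\|\phi\|_1}{1-\alpha\|\phi\|_1}$, which you carry out correctly.
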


For the next results, we recall some notations with some assumptions (see Notation \ref{notation: xi} and Assumption \ref{assump: noyau}). We consider $\lambda^\infty$ as the intensity of a stationary non-linear Hawkes process (see Definition \ref{def:Hawkes}), i.e.
$$\lambda^\infty_t = h\left( \mu + \iint_{(-\infty,t)\times \RR_+} \ind{\theta \leq \lambda^\infty_s}\phi(t-s) N(ds,d\theta)\right) = h\left(\mu + \xi^\infty_t \right), \quad t\in \RR .$$
We also consider $\lambda$ as the intensity of a Hawkes process with empty past, i.e.
$$\lambda_t = h\left( \mu + \iint_{(0,t)\times \RR_+} \ind{\theta \leq \lambda_s}\phi(t-s) N(ds,d\theta)\right) = h\left(\mu + \xi_t \right), \quad t\in \RR .$$
Moreover, we suppose that $h: \RR \to \RR_+$ is $\alpha$-Lipschitz. We also suppose that $\alpha$ and $\phi: \RR_+ \to \RR$ are such that $\alpha \Vert \phi \Vert_1 <1$. 

For the next corollary, we rely on \cite{bremaud_stability_1996} for the existence of a stationary Hawkes processes. Indeed, we based the following corollary on the fact that for $t\in \RR$, $\E{\lambda^\infty_t}=\E{\lambda^\infty_0}$.
\begin{corollary}
     Let $\phi:\RR_+ \to \RR$ be locally bounded, and $h:\RR \to \RR_+$ be such that $h$ is $\alpha$-Lipschitz and
    $$\alpha \int_{0}^{+\infty} \left| \phi(t) \right| dt <1 .$$ Consider $\lambda^\infty$ the solution of 
    $$\lambda^\infty_t = h \left(\mu + \iint_{(-\infty,t) \times \RR_+} \phi(t-u)\ind{\theta \leq \lambda^\infty_u} N(du,d\theta) \right) ,\quad t \in \RR .$$
    Then, for all $t\geq s$, we have that $t\mapsto \EE_s \left[ \lambda^\infty_{t}\circ \eps_{(s,0)}^+ \right]$ is locally bounded a.s.
\end{corollary}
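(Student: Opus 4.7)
The plan is to reduce the claim to a direct application of Proposition \ref{prop: existence} after shifting the time origin to $s$ and separating the contributions of $N$ on $(-\infty,s)$ from those on $[s,+\infty)$.

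First, I would determine the effect of $\eps_{(s,0)}^+$ on the equation defining $\lambda^\infty$. Since $\lambda^\infty$ is $\FF^N$-predictable, the analogue of Lemma \ref{lemma:mesur} yields $\lambda^\infty_u \circ \eps_{(s,0)}^+ = \lambda^\infty_u$ for $u \leq s$, and since $N \circ \eps_{(s,0)}^+ = N + \delta_{(s,0)}$ almost surely, it follows that for $t > s$,
$$\lambda^\infty_t \circ \eps_{(s,0)}^+ = h\!\left(\tilde{\mu}_t + \iint_{[s,t)\times\RR_+} \phi(t-u)\,\ind{\theta \leq \lambda^\infty_u \circ \eps_{(s,0)}^+} N(du,d\theta)\right),$$
where the $\Fcal^N_s$-measurable baseline is
$$\tilde{\mu}_t := \mu + \phi(t-s) + \iint_{(-\infty,s)\times\RR_+} \phi(t-u)\,\ind{\theta \leq \lambda^\infty_u} N(du,d\theta).$$

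Next, setting $\Gcal_r := \Fcal^N_{s+r}$ so that $\Gcal_0 = \Fcal^N_s$, and letting $\Lambda_r := \lambda^\infty_{s+r}\circ\eps_{(s,0)}^+$, I would observe that $\Lambda$ solves a Volterra SDE of exactly the form required by Proposition \ref{prop: existence}, with $t_0=0$, base process $r \mapsto \tilde{\mu}_{s+r}$ (which is $\Gcal_0$-measurable, hence $\GG$-predictable), and driven by the restriction of $N$ to $(s,+\infty)\times\RR_+$. To invoke that proposition it remains to check that $r \mapsto \E{h(\tilde{\mu}_{s+r})\mid\Gcal_0} = h(\tilde{\mu}_{s+r})$ is locally bounded a.s. Using the $\alpha$-Lipschitz property of $h$,
$$h(\tilde{\mu}_{s+r}) \leq h(\mu) + \alpha|\phi(r)| + \alpha \iint_{(-\infty,s)\times\RR_+} |\phi(r+s-u)|\,\ind{\theta \leq \lambda^\infty_u} N(du,d\theta).$$
The first two terms are locally bounded in $r$ by the local boundedness of $\phi$. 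For the last one, Fubini combined with the stationarity $\E{\lambda^\infty_u}=\E{\lambda^\infty_0}$ yields
$$\E{\sup_{r\in[0,R]} \iint_{(-\infty,s)\times\RR_+} |\phi(r+s-u)|\,\ind{\theta \leq \lambda^\infty_u} N(du,d\theta)} \leq \E{\lambda^\infty_0} \int_0^{+\infty}\!\!\! \sup_{r\in[0,R]} |\phi(r+w)|\, dw,$$
which is finite under the running Assumption \ref{assump: noyau} (the tail $\int_0^\infty |\phi|$ controls the contribution of far-past jumps, while local boundedness of $\phi$ handles the near window $(s-R,s)$). Hence the supremum is a.s. finite and the function is locally bounded a.s.

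Once this is granted, Proposition \ref{prop: existence} directly supplies, for all $t \geq s$,
$$\EE_s\!\left[\lambda^\infty_t \circ \eps_{(s,0)}^+\right] \leq h(\tilde{\mu}_t) + \int_s^t \psi^{(\alpha)}(t-v)\, h(\tilde{\mu}_v)\, dv,$$
which is an a.s.\ locally bounded function of $t$, as required. The main obstacle is precisely the uniform-in-$r$ estimate on the integral against past jumps of $\lambda^\infty$: this is the only step where one must pass from a.s.\ pointwise finiteness to a genuine local bound, and it is where the decay properties of $\phi$ (beyond mere local boundedness) are essential.
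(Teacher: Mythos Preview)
Your approach is the same as the paper's: decompose $\lambda^\infty_t\circ\eps_{(s,0)}^+$ into an $\Fcal_s^N$-measurable baseline $\tilde\mu_t$ plus a Volterra-type self-excitation on $(s,t)$, and then invoke Proposition~\ref{prop: existence} with the shifted filtration $\Gcal_r=\Fcal^N_{s+r}$. The only difference is that the paper checks merely that the \emph{unconditional} expectation $\E{\mu_t}$ is locally bounded rather than the pathwise supremum you attempt (and indeed your displayed inequality $\int_0^\infty\sup_{r\in[0,R]}|\phi(r+w)|\,dw<\infty$ need not hold under Assumption~\ref{assump: noyau} alone, so the paper's weaker verification is actually the safer route).
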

\begin{proof}
    Let $t\geq s$ be fixed. Then we have:
    \begin{align*}
    \lambda^\infty_t \circ \eps_{(s,0)}^+ &= h \left(\mu + \phi(t-s) + \iint_{(-\infty,s)\times \RR_+} \ind{\theta \leq \lambda_r^\infty} \phi(t-r) N(dr,d\theta) \right.\\
    &\quad +\left.\iint_{(s,t)\times \RR_+} \ind{\theta \leq \lambda_r^\infty \circ \eps_{(s,0)}^+} \phi(t-r) N(dr,d\theta)  \right).
    \end{align*}
    We set $\mu_t = \mu + \phi(t-s) + \iint_{(-\infty,s)\times \RR_+} \ind{\theta \leq \lambda_r^\infty} \phi(t-r) N(dr,d\theta)$ in order to get
    $$\lambda^\infty_t \circ \eps_{(s,0)}^+ = h \left(\mu_t +\iint_{(s,t)\times \RR_+} \ind{\theta \leq \lambda_r^\infty \circ \eps_{(s,0)}^+} \phi(t-r) N(dr,d\theta)  \right).$$
    Besides, since $\phi$ is locally bounded, we get that
    $$\E{ \mu_t} = \mu + \phi(t-s) + \int_{-\infty}^s \phi(t-r) \E{\lambda^{\infty_r}} dr$$
    is locally bounded.
    Hence, $\left( \mu_t\right)_{t\geq s}$ is $\GG = \left(\Fcal^N_{t+s} \right)_{t\in \RR_+}$-predictable and $t \mapsto \E{h(\mu_t) \mid \Gcal_0}$ is locally bounded. So, we apply Proposition \ref{prop: existence} and we obtain
    $$\EE_s \left[\lambda^\infty_t \circ \eps_{(s,0)}^+ \right] \leq \EE_s \left[h\left( \mu_t\right) \right] + \int_s^t \psi^{(\alpha)}(t-r) \EE_s \left[h\left( \mu_r\right) \right] dr, \quad \forall t\geq s.$$
    Therefore, $t \mapsto \EE_s \left[\lambda^\infty_t \circ \eps_{(s,0)}^+ \right]$ is locally bounded.
\end{proof}

The next lemma links the results about Volterra's equation with our stochastic version.
\begin{lemma} \label{lem: maj esp cond}
    Under Assumption \ref{assump: noyau}, there exists a positive integrable function $\psi^{(\alpha)}$ such that for any $(u,t,\rho) \in \RR_+^3$
    $$\EE_u\left[\left| D_{(u,\rho)}\left(\lambda_t^\infty \right) \right| \right] \leq \1_{\rho \leq \lambda_u^{\infty}}\psi^{(\alpha)}(t-u) \quad \text{and} \quad \EE_u\left[\left| D_{(u,\rho)}\left(\lambda_t \right) \right| \right] \leq\1_{\rho \leq \lambda_u} \psi^{(\alpha)}(t-u), \quad a.s.$$
\end{lemma}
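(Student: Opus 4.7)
The plan is to derive a Volterra-type integral inequality for the conditional quantity $g(t) := \EE_u[|D_{(u,\rho)}\lambda_t^\infty|]$ and then close it via the sub-solution comparison of Proposition \ref{prop: sous_sol volterra}, identifying $\psi^{(\alpha)} = \sum_{k \geq 1} (\alpha|\phi|)^{\ast k}$ from Proposition \ref{prop: existence} as the natural candidate. The argument for the empty-history intensity $\lambda$ is the same after replacing $(-\infty, \cdot)$ by $(0, \cdot)$, so I would only treat the stationary case in detail.

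I would first dispose of two trivial cases. If $\rho > \lambda_u^\infty$, then Proposition \ref{prop: non importance du saut} (applied in the stationary setting) gives $\lambda_t^\infty \circ \eps_{(u,\rho)}^+ = \lambda_t^\infty$ and hence $D_{(u,\rho)}\lambda_t^\infty = 0$. If $t \leq u$, then predictability of $\lambda^\infty$ makes $\lambda_t^\infty$ invariant under $\eps_{(u,\rho)}^+$ and the derivative again vanishes. Since each $|\phi|^{\ast k}$ is supported on $\RR_+$, $\psi^{(\alpha)}(t-u) = 0$ when $t \leq u$, so both cases are consistent with the claimed bound.

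For the nontrivial regime $\rho \leq \lambda_u^\infty$ and $t > u$, I would use the $\alpha$-Lipschitz property of $h$ to write $|D_{(u,\rho)}\lambda_t^\infty| \leq \alpha |D_{(u,\rho)}\xi_t^\infty|$ and then expand
$$ D_{(u,\rho)}\xi_t^\infty = \phi(t-u) + \iint_{(u,t)\times \RR_+} \phi(t-s)\bigl[\1_{\theta \leq \lambda_s^\infty \circ \eps_{(u,\rho)}^+} - \1_{\theta \leq \lambda_s^\infty}\bigr] N(ds,d\theta), $$
where the first term is the contribution of the forced atom at $(u,\rho)$ and the integral tracks its propagation through perturbed future intensities. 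Taking absolute values, then $\EE_u[\cdot]$, and replacing $N$ by its compensator $ds\,d\theta$ (valid because the integrand is nonnegative), the integration in $\theta$ yields $|\lambda_s^\infty \circ \eps_{(u,\rho)}^+ - \lambda_s^\infty| = |D_{(u,\rho)}\lambda_s^\infty|$ on the right-hand side. This gives the sub-Volterra inequality
$$ g(t) \leq \alpha|\phi(t-u)| + \int_u^t \alpha |\phi(t-s)|\,g(s)\,ds, \quad t > u. $$
Proposition \ref{prop: sous_sol volterra} now applies with kernel $\Phi = \alpha|\phi|$ (whose $L^1$ norm is $< 1$ by Assumption \ref{assump: noyau}) and source $M(s) = \alpha|\phi(s-u)|$; unwinding the resolvent gives $g(t) \leq \sum_{k\geq 1}(\alpha|\phi|)^{\ast k}(t-u) = \psi^{(\alpha)}(t-u)$, and integrability follows from the geometric series $\|\psi^{(\alpha)}\|_1 = \alpha\|\phi\|_1/(1-\alpha\|\phi\|_1) < \infty$.

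The main subtlety I expect is the compensation step: one must carefully argue that the $\EE_u[\cdot]$ of a $N(ds,d\theta)$-integral is bounded by the same expression with $N$ replaced by $ds\,d\theta$. This works because the integrand is nonnegative and remains $\FF^N$-predictable once $(u,\rho)$ is frozen (the perturbed intensity $s \mapsto \lambda_s^\infty \circ \eps_{(u,\rho)}^+$ is itself $\FF^N$-predictable), so the compensated Poisson martingale has zero conditional mean. Everything else is elementary manipulation of the resolvent $\psi^{(\alpha)}$.
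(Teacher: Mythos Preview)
Your proposal is correct and follows essentially the same route as the paper: expand $D_{(u,\rho)}\xi_t^\infty$, pass to absolute values and conditional expectation (replacing $N$ by its compensator via nonnegativity and predictability), use the $\alpha$-Lipschitz property of $h$ to obtain a Volterra sub-inequality with kernel $\alpha|\phi|$, and close it with Proposition \ref{prop: sous_sol volterra} to identify the bound as $\psi^{(\alpha)}(t-u)=\sum_{k\geq 1}\alpha^k|\phi|^{\ast k}(t-u)$. Your treatment of the trivial cases and your explicit flagging of the compensation step are, if anything, more careful than the paper's write-up; the only minor slip is that the vanishing for $\rho>\lambda_u^\infty$ follows from Assumption \ref{assump: initiale} (the intensity is unchanged when the added atom lies above the graph) rather than from Proposition \ref{prop: non importance du saut}, which compares two shifts both below the intensity.
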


\begin{proof}[Proof of Lemma \ref{lem: maj esp cond}]
    By the definition of the Malliavin derivative, we have
    $$D_{(u,\rho)}\left(\xi_t^\infty \right) = \phi(t-u)\1_{\rho \leq \lambda_u^{\infty}} + \iint_{(u,t)\times \RR_+} \phi(t-r) \left(\1_{\theta \leq \lambda_r^{\infty}\circ \eps_{(u,\rho)}^{+}} - \1_{\theta\leq \lambda_r^{\infty}} \right) N(dr,d\theta ).$$
    Using the triangle inequality, we obtain
    $$\left| D_{(u,\rho)}\left(\xi_t^\infty \right) \right| \leq \left|\phi(t-u)\right|\1_{\rho \leq \lambda_u^{\infty}} + \iint_{(u,t)\times \RR_+} \left|\phi(t-r)\right| \left|\1_{\theta\leq \lambda_r^{\infty}} - \1_{\theta \leq \lambda_r^{\infty}\circ \eps_{(u,\rho)}^{+}} \right| N(dr,d\theta ).$$
    Then,
    $$\EE_u\left[\left| D_{(u,\rho)}\left(\xi_t^\infty \right) \right| \right] \leq \left|\phi(t-u)\right|\1_{\rho \leq \lambda_u^{\infty}} + \int_u^t \left|\phi(t-r)\right| \EE_u\left[\left| D_{(u,\rho)}\left(\lambda_r^\infty \right) \right| \right] dr.$$
    Finally, we have that $x \mapsto \1_{u\leq x}\EE_u\left[\left|D_{(u,\rho)}\left(\lambda_x^\infty \right) \right| \right]$  is a subsolution of a Volterra equation since
    \begin{align*}
    \EE_u\left[\left| D_{(u,\rho)}\left(\lambda_t^\infty \right) \right| \right] &\leq \alpha \EE_u\left[\left| D_{(u,\rho)}\left(\xi_t^\infty \right) \right| \right]\\
    &\leq  \alpha \left|\phi(t-u)\right|\1_{\rho \leq \lambda_u^{\infty}} + \alpha \int_u^t \left|\phi(t-r)\right| \EE_u\left[\left| D_{(u,\rho)}\left(\lambda_r^\infty \right) \right| \right] dr.
    \end{align*}
    According to Proposition \ref{prop: sous_sol volterra}, we define $\psi^{(\alpha)}$ has follows
    $$\psi^{(\alpha)} : x \mapsto \sum\limits_{k\geq 1} \alpha^k \left|\phi \right|^{\ast k}(x)$$
    where $\left| \phi\right|^{\ast (k+1)} = \left| \phi\right| \ast \left| \phi\right|^{\ast k}$ and get
\begin{align*}
    \EE_u\left[\left| D_{(u,\rho)}\left(\lambda_t^\infty \right) \right| \right] &\leq  \alpha \left|\phi(t-u)\right|\1_{\rho \leq \lambda_u^{\infty}} + \alpha \int_u^t \psi^{(\alpha)}(t-r)\left|\phi(r-u)\right|\1_{\rho \leq \lambda_u^{\infty}} dr\\
    & \leq  \1_{\rho \leq \lambda_u^{\infty}} \left( \alpha\left|\phi(t-u)\right| + \alpha\int_u^t \psi^{(\alpha)}(t-r)\left|\phi(r-u)\right| dr \right) \\
    & \leq \1_{\rho \leq \lambda_u^{\infty}}\psi^{(\alpha)}(t-u).
\end{align*}
\end{proof}

\paragraph{Acknowledgment}: This work received support from the University Research School EUR-MINT
(State support managed by the National Research Agency for Future Investments
 program bearing the reference ANR-18-EURE-0023).

\begin{sloppypar}
\printbibliography
\end{sloppypar}

\end{document}